\newtheorem{theorem}{Theorem}[section]
\newtheorem{lemma}[theorem]{Lemma}
\newtheorem{proposition}[theorem]{Proposition}
\newtheorem{Remark}[theorem]{Remark}
\theoremstyle{definition}
\newtheorem{definition}[theorem]{Definition}
\theoremstyle{remark}
\newtheorem{note}[theorem]{Note}
\numberwithin{equation}{section}
\begin{document}

\setcounter{page}{1}

\title[Bi-parametric potential operators]
 {Wavelet-based inversion and analysis of Flett, Riesz and bi-parametric potentials in $(k,1)$ generalized Fourier framework}

\author[Athulya P, Umamaheswari S, S.K. Verma]{
  Athulya P\textsuperscript{a}, Umamaheswari S\textsuperscript{a}, \and Sandeep Kumar Verma\textsuperscript{a} \\
  \textsuperscript{a} Department of Mathematics, SRM University AP, Amaravati 522240, Andhra Pradesh, India  \\
Correspondence to : \texttt{sandeep16.iitism@gmail.com}}

\subjclass[2020]{43A32, 47G40, 44A35,  42B35}

\keywords{Generalized Fourier transform, Riesz potential, Bessel potential, Flett potential, Wavelet transform.}

\begin{abstract}
In this paper, we construct and analyze Bessel and Flett potentials associated with the heat and Poisson semigroups in the framework of the $(k,1)$-generalized Fourier transform. We establish fundamental properties of these potentials and derive an explicit inversion formula for the Flett potential using a wavelet-like transform. Furthermore, we introduce a $\beta$-semigroup $\mathcal{B}_k^{(\beta,t)}$, defined via $W_k^{(\beta, t)}$, which enables the formulation of an inversion formula for the Riesz potential. As a unifying extension, we define and investigate bi-parametric potentials $\mathfrak{J}_k^{(\alpha,\beta)}$, which generalize both the Bessel potential and the Flett potential. In addition, we define the associated function spaces. 
\end{abstract}

\maketitle

\section{Introduction}
Potential theory is a branch of mathematical physics that originated in the 19th century. It primarily deals with the study of harmonic functions, which are solutions to Laplace’s equation $(\Delta u=0)$ \cite{Donovan}.  
Potential theory is highly valued in mathematics for its role as a smoothing operator. It transforms rough or irregular functions into smoother, more regular forms, which is especially useful when solving partial differential equations. A classic example is the Riesz potential, a fractional integral operator that generalizes the classical Newtonian potential \cite{Admas}. In Euclidean space, the Riesz potential is defined as $\mathbf{R}_{\alpha}= (-\Delta)^{-\alpha/2},$ where $0< \alpha< n$. The Riesz kernel behaves well locally (as $|x|\rightarrow 0$), making it effective for smoothing. However, its global behavior (as $|x|\rightarrow \infty$) becomes less favorable as
$\alpha$ increases. The Riesz potential can be modified by preserving its local smoothing properties to address this issue. One of the simplest and most natural modifications is to replace the non-negative operator  $(-\Delta)$ with the strictly positive operator  $I-\Delta,$ where $I$ is the identity operator, leading to the Bessel potential $\mathbf{I}_{\alpha}= (I-\Delta)^{-\alpha/2}$, for $\alpha>0$.
Both $(-\Delta)^{-\alpha/2}$, and $(I-\Delta)^{-\alpha/2}$ describe fractional powers of differential operators, though they reduce to Fourier multipliers with the factors $\|x\|^{-\alpha}$ and   $(1+\|x\|^2)^{-\alpha/2}$ respectively \cite{Stein}. Meanwhile, Flett \cite{Flett} introduces a novel fractional integro-differential operator by using
$(I+\sqrt{-\Delta})^{-\alpha}$ in place of the more traditional $(-\Delta)^{-\alpha/2}$ or $(I-\Delta)^{-\alpha/2}$. This approach defines the Flett potential as a Fourier multiplier operator, characterized by the factor $(1+\|x\|)^{-\alpha}$. It provides better localization properties and can be more flexible in handling anisotropic spaces and specific weight functions \cite{Yildiz}.

The theory of potentials was originally developed in the Euclidean context, where it aligned naturally with the classical Laplacian and the standard Fourier transform. Advancements in integral transforms, such as the Dunkl and Fourier-Bessel transforms, have paved the way for the development of new differential operators. These innovations have produced alternative forms of the Laplacian associated with these transforms. As a result, researchers have been motivated to generalize the concept of potentials across diverse frameworks, including differential-difference operators \cite{Ben, Xu, Verma-25} and Laplace-Bessel differential operators \cite{Yildiz}. Moreover, potentials have been explored in more abstract contexts, such as metric measure spaces \cite{Hu}, reflecting the growing interest in generalizing classical analysis and Lebedev–Skalskaya transform \cite{Akhilesh-23} etc.

In 2021, Ivanov \cite{Ivanov1} made a contribution by establishing the Riesz potential in the $(k,1)$-generalized Fourier setting, which involves the operator $\|x\|\Delta_k$, where $\Delta_k$ denotes the Dunkl Laplacian and in 2022, Ben Sa\"id and  Negzaoui \cite{Negzaoui}
proposed a unified theory for the Flett potential within the framework of the $(k, a)$-generalized Fourier transform on the real line. In our work, we develop  Flett and Bessel potentials, as well as introduce a more general representation of the Riesz potential,  associated with the  $(k,1)$-generalized Fourier transform in $\mathbb{R}^n$. 

The $(k, a)$-generalized Fourier transform is the far-reaching generalization of the Fourier transform by introducing two parameters $k$ and $a$. In \cite{orsted, Kobayashi},  Ben Sa\"id, Kobayashi and {\O}rsted constructed a two-parameter family of actions $w_{k, a}$ of the Lie algebra $\mathfrak{sl}(2,\mathbb{R})$  by differential-difference operators on $\mathbb{R}^n/\{0\},$ where the parameter $k$ is the multiplicity function in the Dunkl theory and $a>0$ arises from the interpolation of the Weil representation and the minimal unitary representation of the conformal group. The action $w_{k, a}$ lifts to a unitary representation of the universal covering of $SL(2, \mathbb{R})$, and is extended to a holomorphic semigroup $\Omega_{k, a}$. One boundary value of the semigroup  $\Omega_{k,a}$ provides the so-called $(k,a)$-generalized Fourier transform $\mathcal{F}_{k,a}$ \cite{orsted}. Under various values of $k$ and $a$ we can construct the well-known transforms like Fourier transform $( k=0, a=2)$, Dunkl transform $(k\neq 0, a=2)$ \cite{Dunkl-91}, Hankel transform  $(k=0, a=1)$ \cite{Sneddon}, and Dunkl-Hankel transform $(k\neq 0, a=1)$ \cite{S.B}. The Dunkl- Hankel transform is also known as the $(k,1)$-generalized Fourier transform. The fundamental tools for the $(k,1)$-generalized Fourier transform like translation, convolution, and maximal operators are discussed in \cite{S.B}, the Riesz transform is established in \cite{Ivanov1} and some properties and applications are discussed in \cite{Ivanov2}. Furthermore, studies on the imaginary powers of harmonic oscillators within the $(k,1)$ framework are presented in \cite{Teng}. Recently, the fractional powers of the $(k,1)$-Laplacian is studied in \cite{Ben-2025}.

\par  The motivation for this work stems from the significant role of weak-singular integral operators, such as the classical Riesz, Bessel, Flett, and parabolic potentials, and their various generalizations, in harmonic analysis and their applications. One key problem in potential theory is finding inversion formulas for these potentials, and several approaches have been developed to address this. The ``hypersingular integral technique", a powerful method for inverting potentials, has been developed and studied by Stein \cite{Stein},  Lizorkin \cite{Lizorkin-1970},  Samko \cite{Samko1993, Samko2002},  Rubin \cite{Rubin-1987, Rubin-1996}, and many others. Later, Rubin \cite{Rubin-1996, Rubin-1998} proposed a ``wavelet approach" to this problem and he along with many other mathematicians pursued a detailed study on the same. For instance, wavelet-type representations of Bessel and generalized parabolic potentials were introduced, and inversion formulas for these potentials were derived by  Aliev and  Rubin \cite{Aliev-2001, Aliev-2002},  Aliev and Eryi\u{g}it \cite{AlievEry-2002},  Sezer and  Aliev \cite{Sezer-Aliev} in their respective works. In 2005,  Aliev and  Rubin \cite{Aliev-2005} defined a new wavelet-like transform and proved explicit inversion formulas for both ordinary and generalized Riesz and Bessel potentials.  Sezer and  Aliev \cite{Sezer} further characterized Riesz potential spaces using these transforms.  In \cite{Verma-25}, the authors have investigated the bi-parametric potential operators in the Dunkl setting. Building on this prior research, we establish an inversion formula for Flett and Riesz potentials using the wavelet approach in the framework of $(k,1)$-generalized Fourier transform.  This method simplifies the complexity involved in higher-dimensional integration with weighted measures.
\par The core of this paper is organized into three sections: Bessel, Flett, and Riesz potentials. 
The Bessel potential denoted as $\mathbf{I}^\alpha_k$, is an operator on $L_k^p(\mathbb{R}^n)$ defined as
$$ \mathbf{I}^\alpha_k = (I- \|x\|\,\Delta_{k})^{-\frac{\alpha}{2}}, \,\, \alpha>0.$$
The $L^2$ theory associated with the multipliers leads to the following convolution form for the Bessel potential.
 
\begin{align*}
\mathbf{I}^\alpha_k(f)=  g^\alpha_k\ast_k f, \qquad  f\in L_k^p(\mathbb{R}^n), \quad 1\le p\le \infty,
 \end{align*}
 where  \begin{equation*}
g^\alpha_k(x) =  \frac{1}{\Gamma(\frac{\alpha}{2})}\int_0^\infty e^{-t}\,e^{\frac{-\|x\|}{t}}\,t^{-(n+2\gamma-1)}\, t^{\frac{\alpha}{2}}\, \frac{dt}{t}.
\end{equation*}
 
In addition, we introduce the Poisson semigroup (Poisson integral transform) by incorporating the Poisson kernel $P_k$, as 
\begin{equation*}
 \mathcal{P}_k^t(f)(x) =  P_k(t,\cdot) \ast_k f(x), \quad x\in \mathbb{R}^n, 
\end{equation*}
where the  Poisson kernel  $P_k(t,\xi):=C_{n,k}\, \frac{t}{\left(t^2+4\|\xi\|\right)^{2\gamma +n-\frac{1}{2}}}$   with $C_{n,k}= \frac{4^{2\gamma +n-1}\,\Gamma(2\gamma +n-\frac{1}{2})}{\sqrt{\pi}}$.

The $(k, 1)$-generalized Fourier Flett potentials $\mathcal{I}_k^\alpha$ of positive order $\alpha$ formally given by 
$$\mathcal{I}_k^\alpha = \left(I+\sqrt{-\|x\|\,\Delta_{k}}\right)^{-\alpha},$$
and the Poisson integral transform induces an integral representation say

\begin{equation*}
 \mathcal{I}_k^\alpha(f)(x) = \frac{1}{\Gamma(\alpha)} \int_0^\infty t^{\alpha-1}\,e^{-t}\,\mathcal{P}_k^t(f)(x)\,dt. 
\end{equation*}
We derive the key properties of this potential in Theorem \ref{T:4.3} and explore a wavelet-like transform that leads to an inverse formula for the Flett potential in Theorem \ref{inversion-Flett}.
\\

\noindent
We also develop a $\beta$-semigroup $ \mathcal{B}_k^{(\beta,t)}$, with the aid of the kernel function $  W_k^{(\beta,t)}(\xi)=\mathcal{F}_k\left( e^{-t\|\cdot\|^{\beta/2}}\right)(\xi) $ 
by utilizing the $(k,1)$-generalized convolution product. 
\begin{align*}
     \mathcal{B}_k^{(\beta,t)}(f)(x) =  W_k^{(\beta, t)} \ast_k f(x).
\end{align*} 
This allows us to represent the Riesz potential  $\mathbf{R}_k^{\alpha}$ 
  in a unified form as
\begin{align*}
    \mathbf{R}_k^{\alpha}(f)(x) = \frac{c_k^{-1}}{\Gamma(2\alpha/\beta)}\int_0^{\infty} t^{\frac{2\alpha}{\beta}-1}\mathcal{B}_k^{(\beta,t)}(f)(y)dt.
\end{align*}
Moreover, $ \mathcal{B}_k^{(\beta,t)}$ plays a key role in deriving the inverse of the Riesz potential. We employ the same wavelet method used for the Flett potential to construct the inverse of $\mathbf{R}_k^{\alpha}(f)$   (see Theorem \ref{Riesz-inversion}).

Finally, we extend our study to bi-parametric potentials using the  $\beta$-semigroup. For $\alpha,\beta>0$, we define the bi-parametric potentials $\mathfrak{J}_k^{(\alpha,\beta)}$ of order $\frac{\alpha}{\beta}$ by
 \begin{align*}
      \mathfrak{J}_k^{(\alpha,\beta)}= \left( I+ (-\|x\|\Delta_k)^{\frac{\beta}{2}}\right)^{-\alpha/\beta}.
 \end{align*}
The operator  $ \mathfrak{J}_k^{(\alpha,\beta)}$ can also be expressed in an integral form using the $\beta$-semigroup, given by
\begin{equation*}
 \mathfrak{J}_k^{(\alpha,\beta)}(f)(x) = \frac{1}{\Gamma(\alpha/\beta)} \int_0^\infty t^{\frac{\alpha}{\beta}-1}\,e^{-t}\,\mathcal{B}_k^{(\beta,t)}(f)(x)\,dt.  
\end{equation*}

Particularly, when $\beta = 1$ and $\beta = 2$, the bi-parametric potentials reduce to the Flett potential $(\mathfrak{J}_k^{(\alpha,1)})$ and the Bessel potential $(\mathfrak{J}_k^{(\alpha,2)})$, respectively. Furthermore, we introduce bi-parametric potential spaces, defined as the image of $L_k^p(\mathbb{R}^n)$ under the operator $\mathfrak{J}_k^{(\alpha,\beta)}$.

\par The structure of the paper is as follows: We begin by introducing fundamental tools for defining the 
$(k,1)$-generalized Fourier transform along with a discussion of translation and convolution operators. Further, we recall the heat semigroup and its basic properties. In Section \ref{S:3}, we establish the Bessel potential and then define the Flett potential associated with the Poisson integral. Sequentially, we discuss wavelet-like transform, which leads to the study of the inversion of the Flett potential.  We introduce the  $\beta$-semigroup incorporated with the Riesz potential and Poisson kernel,  which facilitates the inversion of the Riesz potential and further examine the Riesz potential space in Section \ref{S:4}. Finally, we unify the concepts from  Bessel and Flett potentials, along with the 
$\beta$-semigroup, to present the bi-parametric potential family and their corresponding spaces in Section \ref{S:5}.

\begin{section}{Preliminaries} \label{S:2}
The $(k,1)$-generalized Fourier transform $\mathcal{F}_k,$ stems from the elementary concepts of Dunkl theory. This led us to give a brief overview of Dunkl theory and review the $(k,1)$-generalized  Fourier transform. Finally, we conclude the section with a discussion on the properties of the heat semigroup.

\subsection{Background on Dunkl theory} 
Dunkl theory generalizes the Fourier transform by incorporating the differential-difference operator associated with the reflection group and root system \cite{D1}. We refer the readers to \cite{D2, D3, D4} for more details. \\
Throughout this paper, we consider the Euclidean space $\mathbb{R}^n$, equipped with
 the inner product $\langle x, y \rangle = \sum_{j=1}^n x(j)y(j)$.
The root system $\mathcal{R}$ is a finite collection of non-zero vectors in $\mathbb{R}^n$ that satisfies the following properties:
 \begin{enumerate}[$(i)$]
      \item $ \mathcal{R} \cap \mathbb{R}u = \{\pm u\} $
     \item  $ \sigma _u(\mathcal{R}) = \mathcal{R}$ for each $u \in \mathcal{R},$ 
 \end{enumerate} where $\sigma_u$ is the reflection map with respect to the hyperplane orthogonal to $u$.
The root system $\mathcal{R}$ can be decomposed into positive and negative subsystems, $\mathcal{R}_+$ and $\mathcal{R}_-$ respectively, using any hyperplane that passes through the origin. The group $W$, generated by the reflections $\{\sigma_u \,|\, u \in \mathcal{R}\}$, is known as the finite reflection group. The multiplicity function $k$ is a $W$-invariant complex function defined on the root system $\mathcal{R}$. We primarily focus only on positive-valued multiplicity functions in our analyses. \\
Let $\{e_1, \cdots e_n \}$ be the standard orthonormal basis of $\mathbb{R}^n$, and $1\leq j\leq n$ . The differential-difference operator (Dunkl operator) \cite{D1} reads
\begin{eqnarray*}
    \mathcal{T}_jf(x) = \partial_jf(x)+ \sum_{u\in \mathcal{R}_+}k(u) \langle u, e_j\rangle \frac{f(x)-f(\sigma_u(x))}{\langle u, x\rangle}, \quad f\in \mathcal{C}^{\prime}(\mathbb{R}^n).
\end{eqnarray*}  $\mathcal{T}_j$ represents a perturbation of the standard partial derivatives by reflection components. The standard partial derivatives can be retrieved with $k=0$.
The counterpart to the Euclidean Laplacian is the Dunkl Laplacian defined by $\Delta_k= \sum_{j=1}^n \mathcal{T}_j^2.$  The operator  $\Delta_k$ also admits the following representation with the aid of the usual Laplacian  $\Delta$, and gradient $\nabla$.
\begin{eqnarray*}
    \Delta_kf(x) = \Delta f(x)+ 2\sum _{u \in \mathcal{R}_+} k(u) \left( 
\frac{\langle \nabla f(x),u\rangle }{\langle u,x\rangle} - \frac{f(x)-f(\sigma_u(x))}{\langle u,x\rangle^2}\right), \quad f\in \mathcal{C}^2(\mathbb{R}^n). 
\end{eqnarray*}

In \cite{D2}, Dunkl proved the existence of an operator called intertwining operator $V_k$ with the following properties over the space of  homogeneous polynomial, $\mathcal{P}_m(\mathbb{R}^n)$,
\begin{equation*}
    \mathcal{T}_{\xi}V_k = V_k\partial_{\xi}, \,\, V_k(1)=1, \text{ 
 and  } V_k(\mathcal{P}_m(\mathbb{R}^n)) \subset  \mathcal{P}_m(\mathbb{R}^n), \quad  \text{for all } m \in \mathbb{N}.
\end{equation*}
An integral representation 
\begin{eqnarray} \label{inter-twinning} 
    V_kf(x) = \int_{\mathbb{R}^n} f(\xi)d\mu_x^k(\xi) \quad x \in \mathbb{R}^n, 
\end{eqnarray} for the operator $V_k$ has been provided by R\"osler in \cite{M}.
The measure $d\mu_x^k$ is a unique probability measure with compact support satisfies  supp$(\mu_x^k) \subset \{ \xi \in \mathbb{R}^n: \|\xi\| \leq \|x\| \}.$ The integral representation of $V_k$ given in \eqref{inter-twinning}, 
can further be extended to the space of continuous functions.\\
For a fixed non-negative multiplicity function $k,$ we define the weight function $\upsilon_k$  on $\mathbb{R}^n$ as
\begin{equation*}
\upsilon_k(x) =\lVert x\rVert^{-1} \prod_{u\in \mathcal{R}}|\langle u,x\rangle|^{k(u)}.
\end{equation*}
The weight function $v_k$ is $W$-invariant and homogeneous of degree $2\gamma-1$, where $ \gamma= \sum_{u\in \mathcal{R}_+} k(u)$.
We define the Lebesgue space associated with the weight function $v_k$ as follows. For $1\leq p <\infty,$
\begin{equation*}
L_k^p(\mathbb{R}^n) =\left\{ f:\mathbb{R}^n \rightarrow \mathbb{C}; f\text{ is measurable and } 
\left(\int_{\mathbb{R}^n}|f(x)|^p\upsilon_k(x)dx \right)^{\frac{1}{p}}< \infty
    \right\},
\end{equation*}
where  $dx$ be the Lebesgue measure on $\mathbb{R}^n$, and for $p=\infty,$
\begin{equation*}
L_k^{\infty}(\mathbb{R}^n) = \left\{ f:\mathbb{R}^n \rightarrow \mathbb{C}; f \text{ is measurable  and } \text{ ess.sup}|f(x)|< \infty
\right\}.
\end{equation*}
A function $f:\mathbb{R}^n \longrightarrow \mathbb{C}$ is said to be a radial function if there exists a function $f_0$ on the non-negative real line such that $f(x)=f_0(\|x\|)$ for all $x \in \mathbb{R}^n.$ The collection of all radial functions on $L_{k}^p(\mathbb{R}^n)$ is denoted by $L_{k,\text{rad}}^p(\mathbb{R}^n)$.
Now, we are in a position to define the  $(k,1)$-generalized Fourier transform and discuss some properties of it.

\subsection{The (k,1)-generalized Fourier Transformation}
The $(k,1)$-generalized Fourier transform \cite{Kobayashi} is defined for $f \in L_k^1(\mathbb{R}^n)$, by the  integral
\begin{equation*}
\mathcal{F}_k(f)(x)= c_{k}\int_{\mathbb{R}^n} f(\xi)\,B_k(x,\xi)\,\upsilon_k(\xi)\,d\xi, 
\end{equation*}
 where $B_k(x,\xi)$ is the kernel of the transform and $c_k^{-1}= \int_{\mathbb{R}^n}e^{-\|\xi\|}v_k(\xi)d\xi$. The kernel is explicitly given in terms of the intertwining operator $V_k$ as follows \cite[Theorem 4.24]{Kobayashi} 
 
 \begin{equation*}
B_k(x,\xi)= \Gamma \left( \frac{n-1}{2} + \frac{\gamma}{2} \right) V_k\left[ \Tilde{J}_{\frac{n-3}{2}+\gamma}\left( \sqrt{2\lVert x\rVert \lVert \xi \rVert(1+ \langle\frac{x}{\|x\|},. \rangle)}
\right) \right] \left( \frac{\xi}{\|\xi\|}\right).
 \end{equation*} The normalized Bessel function $\Tilde{J}_v$ can be delineated as  
 \begin{align} 
     \Tilde{J}_v(\xi) =\left( \frac{\xi}{2}\right)^{-v}J_v(\xi) = \sum_{m=0}^{\infty} \frac{(-1)^m \xi ^{2m}}{2^{2m}m!\Gamma(v+m+1)}, \label{Normalized-Bseesel}
\end{align} where $v$ is the order of the Bessel function.
 The kernel $B_k$ is uniformly bounded, i.e., for all $x, y \in \mathbb{R}^n$,  $|B_k(x,y)|\leq 1$   and  in particular, $B_k(0,y)=1$. In addition, it satisfies the relation $ \|x\| \Delta_k^x B_k(x,y) =-\|y\|B_k(x,y) $. 
\\ Now, we list the fundamental properties of the $(k,1)$-generalized Fourier transform \cite{Johansen-2016, Kobayashi}:
\begin{proposition} \label{prop-Fk}
\begin{enumerate}[$(i)$]
\item  Involutory: For every $f\in  L_k^2(\mathbb{R}^n)$,  $\mathcal{F}^{-1}_k (f) = \mathcal{F}_k(f)$. \\
\item Plancherel's formula: For all $f\in  L_k^2(\mathbb{R}^n)$, we have  $$ \|\mathcal{F}_k(f)\|_{L_k^2(\mathbb{R}^n)} = \|f\|_{L_k^2(\mathbb{R}^n)}.$$ 
\item  If $f \in L_k^1(\mathbb{R}^n)\cap L_k^2(\mathbb{R}^n)$, is a radial function, then   
\begin{equation*}
\mathcal{F}_k(f)(\xi) =\mathcal{H}_{n+2\gamma-2}f_0(\|\xi\|),
\end{equation*} where $\mathcal{H}_v$ is the Hankel transform of order $v$
\begin{equation*}
\mathcal{H}_\nu f_0(s) = \int_0^\infty f_0(u)\, \Tilde{J}_\nu(2\sqrt{us})\,u^\nu\,du.
\end{equation*}
\end{enumerate}
\end{proposition}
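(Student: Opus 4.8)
The plan is to treat all three assertions through the $\mathfrak{sl}(2,\mathbb{R})$-representation-theoretic picture that produces $\mathcal{F}_k=\mathcal{F}_{k,1}$ as a boundary value of the holomorphic semigroup, and to organize the soft parts (i)--(ii) around the spectral decomposition of $L_k^2(\mathbb{R}^n)$. First I would recall the orthonormal basis $\{\Phi_{m,\ell}\}$ of $L_k^2(\mathbb{R}^n)$ built by multiplying the $k$-spherical harmonics of degree $m$ by Laguerre-type radial factors; these are exactly the eigenfunctions of the operator $\|x\|\Delta_k-\|x\|$ that generates the one-parameter group whose boundary value is $\mathcal{F}_k$. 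Consequently $\mathcal{F}_k$ acts diagonally, $\mathcal{F}_k\Phi_{m,\ell}=\lambda_{m,\ell}\Phi_{m,\ell}$, the eigenvalues $\lambda_{m,\ell}$ being obtained by exponentiating the (real) spectrum of the generator against the fixed phase built into $\mathcal{F}_{k,1}$.

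For (ii), once the $\lambda_{m,\ell}$ are seen to be unimodular, the family $\{\lambda_{m,\ell}\Phi_{m,\ell}\}$ is again orthonormal, so $\mathcal{F}_k$ preserves $L_k^2$-norms on finite combinations of the $\Phi_{m,\ell}$; extending by density yields Plancherel's identity. For (i) the only extra input is the special feature of $a=1$: there the eigenvalues are not merely unimodular but real, $\lambda_{m,\ell}\in\{\pm1\}$ (unlike generic $a$, where the phase is irrational relative to the generator's spectrum). An operator that is diagonal in an orthonormal basis with eigenvalues $\pm1$ is simultaneously unitary and self-adjoint, whence $\mathcal{F}_k^2=\mathrm{Id}$ and therefore $\mathcal{F}_k^{-1}=\mathcal{F}_k$.

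For (iii) I would argue by direct reduction of the defining integral on a radial input $f(x)=f_0(\|x\|)$. Writing $\eta=r\omega$ in polar coordinates and using that $v_k$ is $W$-invariant and homogeneous of degree $2\gamma-1$, the measure factorizes as $v_k(\eta)\,d\eta=r^{\,n+2\gamma-2}\,v_k(\omega)\,dr\,d\sigma(\omega)$, so that
\begin{equation*}
\mathcal{F}_k(f)(\xi)=c_k\int_0^\infty f_0(r)\,r^{\,n+2\gamma-2}\Big(\int_{S^{n-1}}B_k(\xi,r\omega)\,v_k(\omega)\,d\sigma(\omega)\Big)\,dr .
\end{equation*}
The crux is the inner spherical average: inserting the series \eqref{Normalized-Bseesel} for $\widetilde J_{\frac{n-3}{2}+\gamma}$ and R\"osler's integral representation \eqref{inter-twinning} for $V_k$, the orthogonality of the $k$-harmonics under $v_k\,d\sigma$ should annihilate every angular contribution except the radial ($m=0$) part, and the surviving terms reassemble into the single normalized Bessel function $\widetilde J_{n+2\gamma-2}\big(2\sqrt{r\|\xi\|}\big)$, with the spherical normalization absorbing $c_k$. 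Substituting back leaves exactly $\int_0^\infty f_0(r)\,\widetilde J_{n+2\gamma-2}(2\sqrt{r\|\xi\|})\,r^{\,n+2\gamma-2}\,dr=\mathcal{H}_{n+2\gamma-2}f_0(\|\xi\|)$.

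The main obstacle is precisely this spherical-average computation in (iii): one must track both the order and the multiplicative constant through the $V_k$-average. The order $n+2\gamma-2$ is forced by the homogeneity exponent of the weight, but verifying that the angular integral of the $V_k$-transformed Bessel series collapses to a clean $\widetilde J_{n+2\gamma-2}$ requires the reproducing/orthogonality properties of Dunkl harmonics together with the compact support and probability normalization of $\mu_x^k$ from \eqref{inter-twinning}; it is here, rather than in the soft functional-analytic steps (i)--(ii), that the real work lies. All of this is consistent with, and can alternatively be read off from, the radial reduction of $\mathcal{F}_{k,a}$ in the $(k,a)$-framework \cite{Kobayashi} specialized to $a=1$.
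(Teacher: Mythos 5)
Your proposal is correct, but it cannot be compared to a proof in the paper because the paper gives none: Proposition \ref{prop-Fk} is stated as a list of known facts imported from \cite{Johansen-2016, Kobayashi}. What you have written is essentially a reconstruction of the arguments in those cited sources, and it is sound. For $(i)$--$(ii)$, the Laguerre-times-Dunkl-harmonic basis $\Phi_{m,\ell}$ does diagonalize $\mathcal{F}_{k,a}$, and at $a=1$ the eigenvalues are $(-1)^{\ell+m}$ (the phase $e^{i\pi(n+2\gamma-1)/2}$ times $e^{-\frac{i\pi}{2}(2\ell+2m+2\gamma+n-1)}$), so unitarity gives Plancherel and reality of the spectrum gives $\mathcal{F}_k^2=\mathrm{Id}$, i.e.\ $\mathcal{F}_k^{-1}=\mathcal{F}_k$; this is exactly the special feature of $a=1$ versus $a=2$, where the eigenvalues are powers of $-i$ and the transform has period four. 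For $(iii)$, your polar reduction is the standard Bochner-type argument, and the constants you left implicit do close up: writing $S_k=\int_{\mathbb{S}^{n-1}}v_k(\omega)\,d\omega(\omega)$, the spherical mean of the kernel is
\begin{equation*}
\int_{\mathbb{S}^{n-1}} B_k(\xi,r\omega)\,v_k(\omega)\,d\omega(\omega)
= S_k\,\Gamma(n+2\gamma-1)\,\widetilde{J}_{n+2\gamma-2}\bigl(2\sqrt{r\|\xi\|}\bigr),
\end{equation*}
which is consistent with $B_k(0,\cdot)=1$ and $\widetilde{J}_\nu(0)=1/\Gamma(\nu+1)$, while $c_k^{-1}=\int_{\mathbb{R}^n}e^{-\|\xi\|}v_k(\xi)\,d\xi=S_k\,\Gamma(n+2\gamma-1)$; hence the prefactor $c_k S_k\Gamma(n+2\gamma-1)=1$ disappears and the integral is exactly $\mathcal{H}_{n+2\gamma-2}f_0(\|\xi\|)$ in the paper's normalization. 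The only portion you assert rather than prove is the collapse of the $V_k$-averaged Bessel series to the degree-zero term; this is the Funk--Hecke/orthogonality step for Dunkl harmonics carried out in \cite{Kobayashi}, so it is a citation-sized gap rather than a flaw. In short: the paper buys these facts by reference, and your argument supplies the proofs those references contain.
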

\textbf{Translation, Convolution, and Maximal function} are essential tools for further investigations. A detailed list of properties regarding the translation, convolution, and Hardy-Littlewood maximal function associated with the $(k,1)$-generalized Fourier transform is provided in \cite{S.B}. In order to discuss the translation operator, we define the space $A_k(\mathbb{R}^n),$ where
\begin{equation*}
 A_k(\mathbb{R}^n)=\left\{ f\in L_k^1(\mathbb{R}^n):\mathcal{F}_k(f) \in L_k^1(\mathbb{R}^n) \right\}.
\end{equation*} 
The class $A_k(\mathbb{R}^n)$ is non-empty due to the rapidly decreasing nature of   
$ \mathcal{F}_k\left(\mathcal{S}(\mathbb{R}^n)\right)$ \cite{Grobachev}, and it is contained in the intersection of $L_k^1(\mathbb{R}^n)$ and $L_k^{\infty}(\mathbb{R}^n)$. Hence, it forms a subspace of $L_k^p(\mathbb{R}^n)$, for all $p$.\\
The translation $\tau_yf$, for  functions in $A_k(\mathbb{R}^n)$, is given by 
\begin{equation}\label{eq:2.1}
\tau_yf(x) = c_{k}\int_{\mathbb{R}^n} \mathcal{F}_k(f)(z)\, B_k(x,z)\,B_k(y,z)\,v_k(z)\,dz.
\end{equation}
This formula also holds for functions in $L_k^2(\mathbb{R}^n)$. 
We now summarize some well-known properties of the translation operator $\tau_y$ \cite{S.B}.
\begin{proposition}\label{p:2.1} 
\begin{enumerate}[$(i)$]
\item Let $f \in L_{k, \text{rad}}^1(\mathbb{R}^n)$ be a bounded non-negative function. Then for all $ y \in \mathbb{R}^n$ the translation operator $\tau_y f\geq 0$.
\item For $f \in L^1_{\text{rad},k}(\mathbb{R}^n),$
\begin{equation*}
\int_{\mathbb{R}^n} \tau_y f(x)\,\upsilon_k(x)\,dx = \int_{\mathbb{R}^n} f(x)\, \upsilon_k(x)\,dx.
\end{equation*}
\item For  $ f\in L_k^2(\mathbb{R}^n)$, $ \mathcal{F}_k(\tau_y f)(\xi)= B_k(y,\xi)\,\mathcal{F}_k(f)(\xi)$.
\end{enumerate}
\end{proposition}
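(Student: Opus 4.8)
The plan is to establish the items in the order (iii), (ii), (i): the multiplier identity (iii) is an immediate consequence of the involutivity of $\mathcal{F}_k$, the mass identity (ii) follows from (iii) by evaluating a transform at the origin, and the positivity (i) is the only statement requiring a kernel-level argument — it will be the main obstacle. For (iii), I would read the defining formula \eqref{eq:2.1} as an inversion formula: because $\mathcal{F}_k^{-1}=\mathcal{F}_k$ (Proposition \ref{prop-Fk}(i)), the right-hand side of \eqref{eq:2.1} is precisely $\mathcal{F}_k^{-1}\bigl(B_k(y,\cdot)\,\mathcal{F}_k(f)\bigr)(x)$, so $\tau_y f=\mathcal{F}_k^{-1}\bigl(B_k(y,\cdot)\,\mathcal{F}_k(f)\bigr)$. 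Applying $\mathcal{F}_k$ and cancelling $\mathcal{F}_k\mathcal{F}_k^{-1}=\mathrm{Id}$ gives $\mathcal{F}_k(\tau_y f)(\xi)=B_k(y,\xi)\,\mathcal{F}_k(f)(\xi)$. The multiplier $B_k(y,\cdot)$ is bounded by $1$, so by Plancherel (Proposition \ref{prop-Fk}(ii)) the map $f\mapsto\mathcal{F}_k^{-1}\bigl(B_k(y,\cdot)\mathcal{F}_k f\bigr)$ is a contraction on $L_k^2(\mathbb{R}^n)$, which both legitimizes the computation on the dense class $A_k(\mathbb{R}^n)$ and extends it to all of $L_k^2(\mathbb{R}^n)$.

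For (ii), I would turn total mass into a transform value at the origin: setting $x=0$ in the definition of $\mathcal{F}_k$ and using $B_k(0,\cdot)\equiv1$ gives $\int_{\mathbb{R}^n} g\,v_k\,dx=c_k^{-1}\mathcal{F}_k(g)(0)$ for $g\in L_k^1(\mathbb{R}^n)$. Taking $g=\tau_y f$ and invoking (iii) yields $\int_{\mathbb{R}^n}\tau_y f\,v_k\,dx=c_k^{-1}B_k(y,0)\,\mathcal{F}_k(f)(0)=B_k(y,0)\int_{\mathbb{R}^n}f\,v_k\,dx$, and $B_k(y,0)=1$ because $B_k(\cdot,0)\equiv1$ (for the same reason as $B_k(0,\cdot)\equiv1$: the Bessel argument vanishes and $V_k(1)=1$). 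The radiality of $f$ is used only to guarantee $\tau_y f\in L_k^1(\mathbb{R}^n)$ and to legitimize evaluating $\mathcal{F}_k$ pointwise at $0$; I would supply this integrability from the positive kernel representation built in (i), where that same representation also yields (ii) directly via Fubini and the normalization of the kernel, bypassing the $L^1$-versus-$L^2$ mismatch between (ii) and (iii).

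The real content is (i), which I expect to be the main obstacle, since positivity is invisible at the level of the multiplier $B_k(y,\xi)$. Here the plan is to produce an explicit non-negative integral representation of $\tau_y f$ for radial $f$. For $f=f_0(\|\cdot\|)$, Proposition \ref{prop-Fk}(iii) identifies $\mathcal{F}_k(f)$ with a Hankel transform of order $n+2\gamma-2$, so \eqref{eq:2.1} can be rewritten with the whole $(x,y)$-dependence concentrated in $V_k$ applied to a product of two normalized Bessel functions $\tilde J_\nu$. I would then apply the classical product (Sonine/Gegenbauer) formula for Bessel functions, which writes $\tilde J_\nu(a)\tilde J_\nu(b)$ as the integral of $\tilde J_\nu$ against an explicit non-negative measure, to fuse the two kernels into a single Bessel factor integrated against a positive measure; composing this with the positivity of Rösler's intertwining measure $d\mu_x^k$ from \eqref{inter-twinning} exhibits $\tau_y f(x)$ as the integral of the non-negative bounded function $f$ against a non-negative measure, whence $\tau_y f\ge 0$. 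The delicate points will be matching the orders and arguments of the Bessel functions produced by the $(k,1)$-kernel with those demanded by the product formula, keeping track of the domain of validity (the support bound $\mathrm{supp}(\mu_x^k)\subset\{\xi:\|\xi\|\le\|x\|\}$ is helpful here), and justifying the interchange of the three integrations by Fubini, for which the uniform bound $|B_k|\le1$ together with $f\in L_k^1\cap L_k^\infty$ supplies the needed domination.
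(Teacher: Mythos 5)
A point of reference first: the paper offers no proof of this proposition at all — it is stated as a summary of known results and attributed to \cite{S.B} — so your proposal has to be judged against the standard arguments from that reference rather than against anything in the text. Your parts $(iii)$ and $(ii)$ are essentially correct. Reading \eqref{eq:2.1} as $\tau_y f=\mathcal{F}_k^{-1}\bigl(B_k(y,\cdot)\,\mathcal{F}_k f\bigr)$ and invoking involutivity, $|B_k|\le 1$ and Plancherel gives $(iii)$ together with its $L_k^2$ extension; the evaluation-at-the-origin argument then gives $(ii)$, provided you know $\tau_y f\in L_k^1(\mathbb{R}^n)$ (for the pointwise evaluation at $\xi=0$) — which you can take from Theorem \ref{Thrm-translation} instead of routing it through $(i)$ — and provided you handle general, possibly unbounded, $f\in L^1_{\text{rad},k}$ by a density argument, since $(iii)$ as you use it requires $f\in L_k^2$.

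The genuine gap is in $(i)$, and it sits exactly at the point you flag as ``delicate'': the mismatch of Bessel orders is not bookkeeping, it is the crux, and your assembly fails there. The kernel $B_k$ contains $\tilde{J}_\nu$ with $\nu=\tfrac{n+2\gamma-3}{2}$, while the radial theory (Proposition \ref{prop-Fk}$(iii)$) runs at order $n+2\gamma-2=2\nu+1$. If you expand both kernels by R\"osler's measure \eqref{inter-twinning} and fuse the two $\tilde{J}_\nu$ factors by Gegenbauer's product formula, you do obtain a positive superposition of functions $r\mapsto \tilde{J}_\nu(2\sqrt{rs})$ (the parameter $\|z\|=r$ factors out of the fused argument, and the $\omega$-dependence of $\mu^k_\omega\otimes\mu^k_\omega$ only produces another positive measure). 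But the remaining $r$-integration carries the weight $\mathcal{H}_{n+2\gamma-2}f_0(r)\,r^{n+2\gamma-2}\,dr$, so what you face is the mixed-order transform $\int_0^\infty \mathcal{H}_{n+2\gamma-2}f_0(r)\,\tilde{J}_\nu(2\sqrt{rs})\,r^{n+2\gamma-2}\,dr$ with $\nu\ne n+2\gamma-2$: this is \emph{not} a Hankel inversion and does not return $f_0$ pointwise. Attempting to repair it by Hankel--Parseval leads to Weber--Schafheitlin integrals of the form $\int_0^\infty J_\nu(2t\sqrt{s})\,J_{2\nu+1}(2t\sqrt{u})\,t^{(2\nu+1)-\nu+1}\,dt$, whose exponent has the wrong sign: passing from the low order $\nu$ to the high order $2\nu+1$ is a fractional \emph{differentiation}, neither absolutely convergent nor positivity-preserving. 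What actually makes the argument work — and this is the substantive content of the product/translation formula proved in \cite{S.B} — is that the spherical integration of $d\mu^k_\omega\otimes d\mu^k_\omega\,v_k(\omega)\,d\omega$, interlaced with the Gegenbauer angle, produces precisely the Sonine weight that lifts $\tilde{J}_\nu$ to $\tilde{J}_{2\nu+1}$; this order-lifting identity must be built in from the start and does not follow from ``Gegenbauer plus positivity of $V_k$'' alone. A secondary but real gap: your Fubini justification via $|B_k|\le1$ and $f\in L_k^1\cap L_k^\infty$ is insufficient, because absolute convergence of the triple integral requires $\mathcal{F}_k f\in L_k^1(\mathbb{R}^n)$, which fails for a general bounded nonnegative radial $L_k^1$ function; even the corrected argument must first be run on a dense class (say radial Schwartz functions) and then extended by an approximation argument that preserves positivity.
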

The following theorem explains the boundedness of the translation operator.
 \begin{theorem} \label{Thrm-translation}
    The generalized translation operator $\tau_y,$ can be extended to all radial functions in $L_k^p(\mathbb{R}^n)$, for $1\leq p\leq 2.$ Moreover $\tau_y : L_{\text{rad},k}^p(\mathbb{R}^n) \rightarrow L_k^p(\mathbb{R}^n)$ is a bounded operator.
  \end{theorem}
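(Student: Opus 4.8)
The plan is to establish the bound at the two endpoints $p=1$ and $p=2$ and then interpolate. For $p=2$ the estimate is immediate and holds on all of $L_k^2(\mathbb{R}^n)$: for $f \in L_k^2(\mathbb{R}^n)$, Proposition \ref{p:2.1}$(iii)$ gives $\mathcal{F}_k(\tau_y f) = B_k(y,\cdot)\,\mathcal{F}_k(f)$, so combining the Plancherel identity of Proposition \ref{prop-Fk}$(ii)$ with the uniform kernel bound $|B_k(y,\xi)|\le 1$ yields
\[
\|\tau_y f\|_{L_k^2(\mathbb{R}^n)} = \|B_k(y,\cdot)\,\mathcal{F}_k(f)\|_{L_k^2(\mathbb{R}^n)} \le \|\mathcal{F}_k(f)\|_{L_k^2(\mathbb{R}^n)} = \|f\|_{L_k^2(\mathbb{R}^n)}.
\]
Thus $\tau_y$ is an $L^2$-contraction, uniformly in $y$, in particular on radial functions.

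For $p=1$ I would exploit the positivity of $\tau_y$ on radial functions. First suppose $f$ is a bounded, non-negative, radial function; then Proposition \ref{p:2.1}$(i)$ gives $\tau_y f \ge 0$, and Proposition \ref{p:2.1}$(ii)$ preserves the integral, so $\|\tau_y f\|_{L_k^1} = \int_{\mathbb{R}^n} \tau_y f\, v_k\,dx = \int_{\mathbb{R}^n} f\, v_k\,dx = \|f\|_{L_k^1}$. For a general bounded real radial $f$, decomposing $f = f^+ - f^-$ into its (radial, bounded, non-negative) positive and negative parts and using positivity gives the pointwise domination $|\tau_y f| \le \tau_y f^+ + \tau_y f^- = \tau_y|f|$; integrating and applying $(ii)$ to $|f|$ yields $\|\tau_y f\|_{L_k^1} \le \|f\|_{L_k^1}$. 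Splitting into real and imaginary parts extends this to complex radial $f$ up to a harmless constant, and a density argument, approximating an arbitrary $f \in L_{\text{rad},k}^1(\mathbb{R}^n)$ by bounded radial truncations, removes the boundedness hypothesis. Hence $\tau_y$ extends to a bounded operator $L_{\text{rad},k}^1(\mathbb{R}^n) \to L_k^1(\mathbb{R}^n)$.

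With the endpoint estimates in hand, the case $1 \le p \le 2$ follows by Riesz--Thorin interpolation. The one point that needs care is that the $L^1$ estimate is available only on the radial subspace, so I would run the interpolation within the class of radial functions: since $|f|$, and hence the intermediate functions $|f|^{p/p_z}\,\mathrm{sgn}(f)$ appearing in the three-lines argument, remain radial whenever $f$ is radial, the standard proof of Riesz--Thorin applies verbatim to $\tau_y$ restricted to $L_{\text{rad},k}^1(\mathbb{R}^n) \cap L_{\text{rad},k}^2(\mathbb{R}^n)$, a dense subspace of each $L_{\text{rad},k}^p(\mathbb{R}^n)$ for $1\le p\le 2$. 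This gives $\|\tau_y f\|_{L_k^p} \le C\,\|f\|_{L_k^p}$ on this dense class, and extension by continuity defines the bounded operator $\tau_y : L_{\text{rad},k}^p(\mathbb{R}^n) \to L_k^p(\mathbb{R}^n)$.

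I expect the main obstacle to be the $p=1$ bound. The positivity in Proposition \ref{p:2.1}$(i)$ is stated only for bounded non-negative radial functions, so the argument hinges on first reducing a general radial $f$ to this case via the decomposition into parts and the domination $|\tau_y f| \le \tau_y|f|$, and then controlling the passage to the limit in the density step so that the extended operator genuinely agrees with the integral definition \eqref{eq:2.1} wherever the latter already makes sense. The interpolation, by contrast, is routine once one observes that radiality is preserved along the analytic family.
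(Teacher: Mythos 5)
The paper never proves this theorem: it is recalled as background in Section \ref{S:2} and attributed to \cite{S.B}, so there is no internal proof to compare against. Judged on its own, your argument is correct, and it is the natural reconstruction of the standard proof of such statements: the $L_k^2$ endpoint via Proposition \ref{p:2.1}$(iii)$, Plancherel, and $|B_k(y,\xi)|\le 1$; the $L_k^1$ endpoint via positivity and conservation of mass for bounded non-negative radial functions (Proposition \ref{p:2.1}$(i)$--$(ii)$), extended to signed and complex radial functions through $f=f^+-f^-$ and the domination $|\tau_y f|\le \tau_y|f|$; then interpolation. You also correctly identify and resolve the only delicate point, namely that Riesz--Thorin must be run on the radial subspace rather than on all of $L_k^p(\mathbb{R}^n)$: since $f\mapsto |f|^{\theta}\,\mathrm{sgn}(f)$ preserves radiality, the analytic family in the three-lines argument never leaves the class on which the endpoint bounds hold, and radial simple functions are dense in $L_{\text{rad},k}^p(\mathbb{R}^n)$. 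Two small remarks. First, an alternative that avoids subspace interpolation entirely is to interpolate the globally defined operator $\tau_y\circ P$, where $P$ is the weighted spherical average $Pf(r\xi')=S_k^{-1}\int_{\mathbb{S}^{n-1}}f(r\eta)\,v_k(\eta)\,d\omega(\eta)$; by Jensen's inequality and the homogeneity of $v_k$, $P$ is a norm-one projection onto radial functions on every $L_k^p(\mathbb{R}^n)$, so Riesz--Thorin applies on the full spaces and the theorem follows by restricting to $Pf=f$. Second, in the density step it is worth stating explicitly that the $L^1$-extension agrees with the $L^2$-definition on $L_{\text{rad},k}^1\cap L_k^2$, which follows by extracting an a.e.\ convergent subsequence from the truncations $f\chi_{\{|f|\le N\}}$, which converge to $f$ in both norms; this is the consistency point you flag, and it closes without difficulty. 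Note finally that the restriction $1\le p\le 2$ in the statement is exactly the signature of this endpoint-plus-interpolation scheme, since no $L^\infty$ endpoint is available for $\tau_y$ in the $(k,1)$ setting.
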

  
\begin{definition}\cite{S.B}  Let $f,g \in L_k^2(\mathbb{R}^n)$. Then the convolution product associated with the $(k,1)$-generalized Fourier transform of $f$ and $g$ is
\begin{equation}\label{e:2.1}
f\ast_kg(x) = c_k\int_{\mathbb{R}^n} f(y)\,\tau_xg(y)\,\upsilon_k(y)\,dy.
\end{equation}
\end{definition}
The equation \eqref{e:2.1} can be refined as
\begin{equation} \label{eq:2.3}
f\ast_kg(x) =c_k \int_{\mathbb{R}^n} \mathcal{F}_k(f)(\xi)\,\mathcal{F}_k(g)(\xi)\,B_k(x,\xi)\,\upsilon_k(\xi)\,d\xi.  \end{equation}
Below we listed some elementary properties of the convolution operator.
\begin{proposition}\label{t:2.3}
\begin{enumerate}[$(i)$]
    \item  If $g \in L_{\text{rad},k}^1(\mathbb{R}^n)$ and  $f\in L_k^p(\mathbb{R}^n)$ for $1\le p \le \infty$, then $f\ast_k g\in  L_k^p(\mathbb{R}^n)$ and  we have
\begin{equation} \label{e:2.3}
\|f\ast_k g\|_{L_k^p(\mathbb{R}^n)} \le c_k \|f\|_{L_k^p(\mathbb{R}^n)}  \|g\|_{L_{k}^1(\mathbb{R}^n)}.   
\end{equation}
\item Commutativity: $f\ast_k g = g\ast_k f$.\\
\item  $ \mathcal{F}_k(f\ast_kg)(\xi) = \mathcal{F}_kf(\xi)\mathcal{F}_k(g)(\xi),$ for every $f,g \in L_k^2(\mathbb{R}^n).$
\end{enumerate}
\end{proposition}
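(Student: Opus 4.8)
The plan is to dispatch parts (ii) and (iii) quickly from the Fourier-side representation \eqref{eq:2.3}, and to reserve the real work for the Young-type inequality in (i). For (iii), I would read \eqref{eq:2.3} as the identity $f\ast_k g=\mathcal{F}_k\bigl(\mathcal{F}_kf\cdot\mathcal{F}_kg\bigr)$. This is legitimate for $f,g\in L_k^2(\mathbb{R}^n)$: by Plancherel (Proposition \ref{prop-Fk}(ii)) we have $\mathcal{F}_kf,\mathcal{F}_kg\in L_k^2(\mathbb{R}^n)$, whence their product lies in $L_k^1(\mathbb{R}^n)$ by the Cauchy--Schwarz inequality, so the right-hand side is a genuine $\mathcal{F}_k$-image. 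Applying $\mathcal{F}_k$ to both sides and invoking the involution property $\mathcal{F}_k^2=\mathrm{Id}$ (Proposition \ref{prop-Fk}(i)) then yields $\mathcal{F}_k(f\ast_k g)=\mathcal{F}_kf\cdot\mathcal{F}_kg$. Part (ii) follows at once: either the right-hand side of \eqref{eq:2.3} is visibly symmetric in $f$ and $g$, or, equivalently, the product $\mathcal{F}_kf\cdot\mathcal{F}_kg$ is commutative and $\mathcal{F}_k$ is injective by (i).

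The heart of the proposition is (i), which I would establish by the classical route of two endpoint estimates followed by interpolation. The single analytic input is the contraction estimate $\int_{\mathbb{R}^n}|\tau_z g(y)|\,\upsilon_k(y)\,dy\le\|g\|_{L_k^1(\mathbb{R}^n)}$, valid uniformly in $z$ for radial $g$. To obtain it I would first record the symmetry $\tau_z g(y)=\tau_y g(z)$, read directly from the manifestly symmetric formula \eqref{eq:2.1}, together with the pointwise domination $|\tau_z g|\le\tau_z|g|$; the latter comes from the positivity of $\tau_z$ on non-negative radial functions (Proposition \ref{p:2.1}(i)) combined with linearity, after splitting a real $g$ as $g_+-g_-$ and a complex $g$ into real and imaginary parts. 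Integrating the domination and using the invariance identity $\int\tau_z|g|\,\upsilon_k=\int|g|\,\upsilon_k$ (Proposition \ref{p:2.1}(ii)) gives the contraction estimate. With it, the $p=\infty$ bound is immediate from $|f\ast_k g(x)|\le c_k\|f\|_\infty\int|\tau_x g(y)|\,\upsilon_k(y)\,dy$, while the $p=1$ bound follows from Tonelli's theorem, the symmetry $\tau_x g(y)=\tau_y g(x)$, and the same estimate; both endpoints carry the constant $c_k\|g\|_{L_k^1}$. The linear operator $f\mapsto f\ast_k g$ is thus bounded on $L_k^1(\mathbb{R}^n)$ and on $L_k^\infty(\mathbb{R}^n)$, and the Riesz--Thorin interpolation theorem delivers \eqref{e:2.3} for every $1\le p\le\infty$ with the same constant $c_k\|g\|_{L_k^1}$.

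I expect the main obstacle to be the passage from the non-negative, bounded radial case, to which Proposition \ref{p:2.1}(i)--(ii) apply directly, to a general complex-valued $g\in L_{\mathrm{rad},k}^1(\mathbb{R}^n)$; this requires decomposing $g$ into positive and negative, and real and imaginary, radial parts, truncating to bounded functions so that the positivity statement applies, verifying $|\tau_z g|\le\tau_z|g|$ on this dense cone, and finally removing the truncation by dominated convergence. A secondary point demanding care is the justification of Tonelli/Fubini in the $p=1$ estimate, which is legitimate precisely because the integrand $|f(y)|\,|\tau_x g(y)|$ is non-negative and its iterated integral is finite by the contraction estimate.
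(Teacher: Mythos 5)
This proposition is stated in the paper without proof: it is recalled as a known result from the reference \cite{S.B} (Ben Sa\"id--Deleaval), so there is no internal argument to compare yours against. Judged on its own merits, your proposal is correct in structure and follows the standard route used in that literature (and in the Dunkl-setting antecedent of Thangavelu--Xu): reduce (ii) and (iii) to the Fourier-side identity \eqref{eq:2.3}, and prove (i) by the two endpoint estimates $p=1$, $p=\infty$ built on the uniform contraction $\int|\tau_z g|\,\upsilon_k\le\|g\|_{L_k^1}$ for radial $g$, followed by Riesz--Thorin. The key ingredients you invoke --- the symmetry $\tau_y g(x)=\tau_x g(y)$ visible in \eqref{eq:2.1}, the domination $|\tau_z g|\le\tau_z|g|$ via positivity on nonnegative radial functions, and the mass invariance of Proposition \ref{p:2.1}(ii) --- are exactly the right ones, and you correctly flag the two places needing care (truncation to reach the bounded nonnegative case where Proposition \ref{p:2.1}(i) applies, and the Tonelli justification).

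Two technical points deserve sharper treatment than your sketch gives them. First, in (iii) you apply the involution $\mathcal{F}_k^2=\mathrm{Id}$ to the product $\mathcal{F}_kf\cdot\mathcal{F}_kg$, but Proposition \ref{prop-Fk}(i) states this only on $L_k^2(\mathbb{R}^n)$, whereas Cauchy--Schwarz places the product merely in $L_k^1(\mathbb{R}^n)$; you need an $L^1$-inversion (or uniqueness) statement for $\mathcal{F}_k$, or else interpret the conclusion in a distributional/a.e.\ sense, to close this step. Second, the symmetry $\tau_z g(y)=\tau_y g(z)$ is read off from \eqref{eq:2.1}, which is stated for $g\in A_k(\mathbb{R}^n)$ and extends to $L_k^2(\mathbb{R}^n)$; for a general $g\in L^1_{\mathrm{rad},k}(\mathbb{R}^n)$ the translation is only defined by the bounded extension of Theorem \ref{Thrm-translation}, so the symmetry must be propagated through that extension by a density argument (e.g.\ truncations converging in $L_k^1\cap L_k^2$, passing to a.e.\ convergent subsequences). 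Neither point is a flaw in the strategy --- both are handled routinely in \cite{S.B} --- but as written they are gaps a referee would ask you to fill.
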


We now recall the maximal operator $\mathcal{M}_k$ which is necessary for proving the pointwise convergence of Flett potential.
\begin{definition} \cite{S.B}
Let $f$ be a locally integrable function on $\mathbb{R}^n$. Then the maximal function 
\begin{equation*}
\mathcal{M}_kf(x):= \mathop{\underset{r>0}{\text{sup}}}\frac{1}{\upsilon_k(B(0,r))}\left|\int_{\mathbb{R}^n}f(y)\, \tau_x(\chi_{B(0,r)})(y)\, v_k(y)dy\right|, 
\end{equation*}  
where $B(0,r)$  is the open ball of radius $r$ and centered at origin and \begin{align*}
    v_k(B(0,r))= \int_{B(0,r)}v_k(\xi)\,d\xi.
\end{align*}
Using the spherical co-ordinates $\xi = r\xi^\prime,$ where $\xi^\prime \in \mathbb{S}^{n-1},$ we have 
\begin{align}
    v_k(B(0,r))= \int_o^{r} \int_{\mathbb{S}^{n-1}}   v_k(\xi^\prime) d\omega(\xi^\prime) t^{ 2\gamma+n-2}dt = \frac{S_k }{2\gamma+n-1} r^{2\gamma+n-1}, \label{wt-1}
\end{align}
where $S_k :=\int_{\mathbb{S}^{n-1}}   v_k(\xi^\prime) d\omega(\xi^\prime)$. Here the measure $d\omega$ denotes the Lebesgue surface measure on the unit sphere $\mathbb{S}^{n-1}.$
\end{definition}
\end{section}

To demonstrate Bessel potential, the heat transform must be studied.

\subsection{ Heat transform for (k,1)-generalized Fourier transform } In this part we recall some properties of heat transform. A detailed review can be found in \cite{S.B}. The function $F_k(\cdot, t): \mathbb{R}^n \rightarrow \mathbb{R}$ defined by $$ F_k(x,t) = \frac{c_k}{t^{n+2\gamma-1}}\,e^{-\frac{\|x\|}{t}},\,\, t>0$$ is the solution of the heat equation $\|x\|\, \Delta_k\,u(x,t)= \partial_t\,u(x,t)$.  Associated with the function $F_k$, we define the heat kernel $h_k$ on $\mathbb{R}^n\times\mathbb{R}^n\times (0,\infty)$ by 
$$ h_k(x,y,t):= \tau_y(F_k(\cdot,t))(x).$$  The  definition of translation leads to the following integral representation 
\begin{equation}\label{e:3.1}
   h_k(x,y,t)= c_k^2\,\int_{\mathbb{R}^n} e^{-t\|\xi\|}\,B_k(x,\xi)\,B_k(y,\xi)\, \upsilon_k(\xi)\,d\xi.
\end{equation}
It is also a solution of the heat equation and  we discuss some basic properties of $h_k(x,y,t)$ below:
\begin{proposition}\label{p:3.1}
For all $(x,y,t) \in \mathbb{R}^n  \times \mathbb{R}^n \times (0,\infty)$, we have
\begin{enumerate}[$(i)$]
\item $|h_k(x,y,t)|\le \frac{c_k}{t^{n+2\gamma -1}}\,e^{\frac{-(\|x\|^{1/2}-\|y\|^{1/2})^2}{t}}.$
\item $\mathcal{F}_k(F_k(\cdot,t))(\xi) = e^{-t\|\xi\|}.$ 
\end{enumerate}
\end{proposition}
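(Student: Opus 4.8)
The plan is to prove the two parts in the order (ii) then (i), since (ii) is what converts the translation formula \eqref{eq:2.1} into the integral representation \eqref{e:3.1} and it also feeds into the estimate for (i).

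For (ii), the point is that $F_k(\cdot,t)$ is radial, with radial profile $f_0(u)=\frac{c_k}{t^{n+2\gamma-1}}e^{-u/t}$, and lies in $L_k^1(\mathbb{R}^n)\cap L_k^2(\mathbb{R}^n)$. Hence Proposition \ref{prop-Fk}(iii) applies and reduces the computation to a Hankel transform of order $\nu=n+2\gamma-2$:
$$\mathcal{F}_k\bigl(F_k(\cdot,t)\bigr)(\xi)=\mathcal{H}_{\nu}f_0(\|\xi\|)=\frac{c_k}{t^{\nu+1}}\,\|\xi\|^{-\nu/2}\int_0^\infty e^{-u/t}\,u^{\nu/2}\,J_{\nu}\bigl(2\sqrt{u\|\xi\|}\bigr)\,du,$$
where I used $\tilde J_\nu(2\sqrt{us})=(us)^{-\nu/2}J_\nu(2\sqrt{us})$. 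I would then evaluate the inner integral by the classical Laplace–Hankel formula $\int_0^\infty e^{-pu}u^{\nu/2}J_\nu(2\sqrt{su})\,du=s^{\nu/2}p^{-\nu-1}e^{-s/p}$ (which follows after the substitution $u=w^2$ from the standard Weber integral) with $p=1/t$ and $s=\|\xi\|$. The powers of $t$ cancel and, after accounting for the normalising constant $c_k^{-1}=\int_{\mathbb{R}^n}e^{-\|\xi\|}v_k(\xi)\,d\xi$, the right-hand side collapses to $e^{-t\|\xi\|}$. A conceptually cleaner alternative is to note that $F_k$ solves $\|x\|\Delta_k u=\partial_t u$, apply $\mathcal{F}_k$, and use the eigenrelation $\|x\|\Delta_k^xB_k(x,\xi)=-\|\xi\|B_k(x,\xi)$ to get $\partial_t\widehat{u}=-\|\xi\|\widehat u$, so that $\widehat u(\xi,t)=e^{-t\|\xi\|}$ once the value at $t\to0^+$ is fixed by the same normalisation.

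For (i), I would first record that $F_k(\cdot,t)\ge 0$ is a bounded radial $L^1_{k,\mathrm{rad}}$ function, so Proposition \ref{p:2.1}(i) gives $h_k(x,y,t)=\tau_y\bigl(F_k(\cdot,t)\bigr)(x)\ge0$ and the absolute value can be dropped. The decisive ingredient is the product formula for the generalized translation of a radial function in the $(k,1)$-setting (available from \cite{S.B} and ultimately from the intertwining representation of $B_k$ together with $\operatorname{supp}\mu_x^k\subset\{\|\xi\|\le\|x\|\}$): $\tau_y\bigl(f_0(\|\cdot\|)\bigr)(x)$ is an average of $f_0$ against a probability measure over arguments of the form $\|x\|+\|y\|+2\sqrt{\|x\|\,\|y\|}\,s$ with $s\in[-1,1]$. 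Since $f_0(u)=\frac{c_k}{t^{n+2\gamma-1}}e^{-u/t}$ is decreasing and this argument is minimized at $s=-1$, where it equals $\|x\|+\|y\|-2\sqrt{\|x\|\,\|y\|}=(\|x\|^{1/2}-\|y\|^{1/2})^2$, the average is dominated by $f_0\bigl((\|x\|^{1/2}-\|y\|^{1/2})^2\bigr)$, which is exactly the claimed bound.

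The routine part is the Laplace–Hankel integral in (ii), modulo bookkeeping of the constant $c_k$. The main obstacle is isolating, in (i), the precise form of the radial product formula: one must verify that the translation is a genuine average against a probability measure (so the total mass is $1$) and that the smallest value attained by its argument is $(\|x\|^{1/2}-\|y\|^{1/2})^2$. If the sharp product formula is not directly quotable, a fallback is to start from \eqref{e:3.1} and estimate the kernel through the intertwining representation $B_k=\Gamma(\tfrac{n-1}{2}+\tfrac{\gamma}{2})\,V_k[\tilde J_{(n-3)/2+\gamma}(\cdots)]$ using $|B_k|\le1$ and the support of $\mu_x^k$; this is more laborious but keeps everything within the tools already assembled.
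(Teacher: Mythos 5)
The paper itself never proves Proposition \ref{p:3.1}: it is recalled without proof from \cite{S.B} (``In this part we recall some properties of heat transform''), so your proposal supplies an argument where the paper has none, and it can only be judged against the paper's stated definitions. Your treatment of $(i)$ is the right one and is essentially the argument of the source: $F_k(\cdot,t)$ is bounded, non-negative, radial and in $L_k^1(\mathbb{R}^n)$, so Proposition \ref{p:2.1}$(i)$ lets you drop the absolute value; the $(k,1)$ product formula for translations of radial functions (from \cite{S.B}) writes $\tau_y\bigl(f_0(\|\cdot\|)\bigr)(x)$ as an average of $f_0$ against a probability measure whose argument ranges over $\bigl[(\|x\|^{1/2}-\|y\|^{1/2})^2,\,(\|x\|^{1/2}+\|y\|^{1/2})^2\bigr]$, and monotonicity of $u\mapsto e^{-u/t}$ then gives exactly the stated bound, with the constant $c_k$ inherited from the definition of $F_k$. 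One caution: your ``fallback'' for $(i)$ --- estimating \eqref{e:3.1} using only $|B_k|\le 1$ and the support of $\mu_x^k$ --- cannot work as described, since bounding $|B_k|$ by $1$ erases all dependence on $x$ and $y$ and yields only $|h_k(x,y,t)|\le c_k\,t^{-(n+2\gamma-1)}$; the exponential factor genuinely requires the product-formula/positivity structure, so that escape route should be discarded rather than held in reserve.

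In $(ii)$ there is a genuine flaw: the constant does not ``collapse''. Your own reduction, completed honestly, gives
\begin{equation*}
\mathcal{F}_k\bigl(F_k(\cdot,t)\bigr)(\xi)
=\frac{c_k}{t^{\nu+1}}\,\|\xi\|^{-\nu/2}\cdot\|\xi\|^{\nu/2}\,t^{\nu+1}\,e^{-t\|\xi\|}
=c_k\,e^{-t\|\xi\|},
\qquad \nu=n+2\gamma-2,
\end{equation*}
and no bookkeeping with $c_k^{-1}=\int_{\mathbb{R}^n}e^{-\|\xi\|}v_k(\xi)\,d\xi$ removes that factor: Proposition \ref{prop-Fk}$(iii)$ is already exactly normalized, since it maps $e^{-\|\cdot\|}$ to $e^{-\|\cdot\|}$, in agreement with $\mathcal{F}_k(e^{-\|\cdot\|})=e^{-\|\cdot\|}$. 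Your alternative heat-equation argument leaks the same constant: because $\int_{\mathbb{R}^n}F_k(x,t)\,v_k(x)\,dx=1$, the initial datum is $\lim_{t\to 0}\mathcal{F}_k(F_k(\cdot,t))(\xi)=c_k\,B_k(\xi,0)=c_k$, so that route also returns $c_k\,e^{-t\|\xi\|}$. The mismatch is traceable to the paper's own normalization rather than to your reduction: with $F_k(x,t)=c_k\,t^{-(n+2\gamma-1)}e^{-\|x\|/t}$ as defined here, statement $(ii)$ is off by the factor $c_k$, and indeed the paper's identity \eqref{e:3.1}, read through \eqref{eq:2.1}, forces $\mathcal{F}_k(F_k(\cdot,t))=c_k\,e^{-t\|\xi\|}$; the clean identity $(ii)$ holds for the kernel $t^{-(n+2\gamma-1)}e^{-\|x\|/t}$ without the prefactor. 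A blind proof should have surfaced this discrepancy explicitly --- either proving the $c_k$-version or proving $(ii)$ for the unnormalized kernel --- instead of asserting that the constants cancel; as written, the final equality in your $(ii)$ is unjustified.
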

\begin{definition}
The heat transform of a smooth measurable function $f$ on $\mathbb{R}^n$  is given by
\begin{equation*}
    H_k^t(f)(x) =   \left \{
\begin{array}{ll}
c_k \int_{\mathbb{R}^n} \tau_y\left(F_k(\cdot,t)\right)(x)\,f(y)\, \upsilon_k(y)\,dy,& \text{for}\quad t>0,  \\ \\
f(x) ,& \text{for}\quad t=0.
\end{array}
\right. 
\end{equation*}
\end{definition}
 The boundedness of the heat transform is guaranteed by the following theorem.
\begin{theorem}
    \begin{itemize}
        \item [(i)] For every $t>0$,\,\, $H_k^t$ is a continuous linear operator on $L_k^p(\mathbb{R}^n)$, with $$\|H_k^t(f)\|_{L_k^p(\mathbb{R}^n)}\le \|f\|_{L_k^p(\mathbb{R}^n)}.$$
        \item[(ii)] $(H_k^t)_{t\ge0}$ is a semi group satisfying $\|H_k^t(f)-f\|_{L_k^\infty(\mathbb{R}^n)} \rightarrow 0$ as $t \rightarrow 0$.
    \end{itemize}
\end{theorem}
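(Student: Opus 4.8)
The plan is to first recast the heat transform as a convolution against a positive probability kernel. Since $F_k(\cdot,t)$ is radial, the translation is symmetric, $\tau_y(F_k(\cdot,t))(x)=\tau_x(F_k(\cdot,t))(y)$, so by the convolution formula \eqref{e:2.1} one has $H_k^t(f)=F_k(\cdot,t)\ast_k f$. Writing $K_t(x,y):=c_k\,\tau_x(F_k(\cdot,t))(y)$, the positivity of translation (Proposition \ref{p:2.1}(i)) gives $K_t\ge 0$, while Proposition \ref{p:2.1}(ii), combined with $\mathcal{F}_k(F_k(\cdot,t))(0)=1$ (Proposition \ref{p:3.1}(ii) at $\xi=0$, using $B_k(0,\cdot)=1$), yields the normalization
\begin{equation*}
\int_{\mathbb{R}^n}K_t(x,y)\,\upsilon_k(y)\,dy=c_k\int_{\mathbb{R}^n}F_k(y,t)\,\upsilon_k(y)\,dy=\mathcal{F}_k(F_k(\cdot,t))(0)=1,
\end{equation*}
and the radial symmetry of $F_k(\cdot,t)$ gives the companion identity $\int_{\mathbb{R}^n}K_t(x,y)\,\upsilon_k(x)\,dx=1$. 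Thus $H_k^t$ acts by a doubly stochastic positive kernel.

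Granting this, part (i) is a standard contraction argument. Linearity is immediate. For $p=\infty$ one bounds $|H_k^t(f)(x)|\le\|f\|_{L_k^\infty(\mathbb{R}^n)}\int_{\mathbb{R}^n}K_t(x,y)\upsilon_k(y)dy=\|f\|_{L_k^\infty(\mathbb{R}^n)}$; for $p=1$ one applies Tonelli and the $x$-normalization of $K_t$; and for $1<p<\infty$ one uses Jensen's inequality against the probability measure $K_t(x,\cdot)\upsilon_k(\cdot)$, namely $|H_k^t(f)(x)|^p\le\int_{\mathbb{R}^n}K_t(x,y)|f(y)|^p\upsilon_k(y)dy$, then integrates in $x$ and invokes Fubini together with the $x$-normalization to obtain $\|H_k^t(f)\|_{L_k^p(\mathbb{R}^n)}^p\le\|f\|_{L_k^p(\mathbb{R}^n)}^p$. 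Alternatively, since $F_k(\cdot,t)\in L^1_{\mathrm{rad},k}(\mathbb{R}^n)$, Young's inequality \eqref{e:2.3} of Proposition \ref{t:2.3}(i) delivers the bound at once. Continuity on $L_k^p(\mathbb{R}^n)$ follows from the norm estimate.

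For the semigroup law in part (ii) I would pass to the Fourier side. By Proposition \ref{t:2.3}(iii) and Proposition \ref{p:3.1}(ii), for $f\in L_k^2(\mathbb{R}^n)$ one has $\mathcal{F}_k(H_k^t f)(\xi)=e^{-t\|\xi\|}\mathcal{F}_k(f)(\xi)$, whence
\begin{equation*}
\mathcal{F}_k\big(H_k^t(H_k^s f)\big)(\xi)=e^{-t\|\xi\|}e^{-s\|\xi\|}\mathcal{F}_k(f)(\xi)=e^{-(t+s)\|\xi\|}\mathcal{F}_k(f)(\xi)=\mathcal{F}_k(H_k^{t+s}f)(\xi).
\end{equation*}
Plancherel's theorem (Proposition \ref{prop-Fk}(ii)) makes $\mathcal{F}_k$ injective, so $H_k^t\circ H_k^s=H_k^{t+s}$ on $L_k^2(\mathbb{R}^n)$, and this extends to $L_k^p(\mathbb{R}^n)$ by the contraction bound of part (i) and density; equivalently, the Chapman--Kolmogorov identity $F_k(\cdot,t)\ast_k F_k(\cdot,s)=F_k(\cdot,t+s)$ reduces under $\mathcal{F}_k$ to $e^{-t\|\cdot\|}e^{-s\|\cdot\|}=e^{-(t+s)\|\cdot\|}$.

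For the strong continuity $\|H_k^t f-f\|_{L_k^\infty(\mathbb{R}^n)}\to0$ I would argue first on the dense class $A_k(\mathbb{R}^n)$. Using the Fourier representation \eqref{eq:2.3} of the convolution together with the inversion $f(x)=c_k\int_{\mathbb{R}^n}\mathcal{F}_k(f)(\xi)B_k(x,\xi)\upsilon_k(\xi)d\xi$ (Proposition \ref{prop-Fk}(i)), one gets
\begin{equation*}
H_k^t(f)(x)-f(x)=c_k\int_{\mathbb{R}^n}\big(e^{-t\|\xi\|}-1\big)\mathcal{F}_k(f)(\xi)\,B_k(x,\xi)\,\upsilon_k(\xi)\,d\xi,
\end{equation*}
so that, since $|B_k(x,\xi)|\le1$ and $\mathcal{F}_k(f)\in L_k^1(\mathbb{R}^n)$, the bound $|H_k^t(f)(x)-f(x)|\le c_k\int_{\mathbb{R}^n}|e^{-t\|\xi\|}-1|\,|\mathcal{F}_k(f)(\xi)|\upsilon_k(\xi)d\xi$ holds uniformly in $x$ and tends to $0$ by dominated convergence. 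The estimate then propagates to the $L_k^\infty$-closure of $A_k(\mathbb{R}^n)$ via the $p=\infty$ contraction of part (i). I expect this last point to be the main obstacle: $L^\infty$ strong continuity cannot hold for arbitrary $f\in L_k^\infty(\mathbb{R}^n)$, so one must identify the correct dense class and carry out the approximate-identity step carefully. A more hands-on route via the kernel $K_t$ is genuinely delicate here, because the off-diagonal bound $|h_k(x,y,t)|\le \frac{c_k}{t^{n+2\gamma-1}}e^{-(\|x\|^{1/2}-\|y\|^{1/2})^2/t}$ of Proposition \ref{p:3.1}(i) only localizes mass to the sphere $\{\|y\|=\|x\|\}$ rather than to the point $y=x$, so recovering $f(x)$ demands the full angular structure of the Dunkl-type translation $\tau_x$; this is precisely why the Fourier argument is preferable.
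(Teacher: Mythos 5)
The paper itself contains no proof of this theorem: it sits in the preliminaries and is recalled verbatim from \cite{S.B}, so there is no argument by the authors to compare yours against. Judged on its own terms, your proposal is correct and self-contained. The reduction $H_k^t(f)=F_k(\cdot,t)\ast_k f$ via the symmetry of \eqref{eq:2.1}, the positivity and double normalization of the kernel $K_t$ (Proposition \ref{p:2.1} combined with $\mathcal{F}_k(F_k(\cdot,t))(0)=e^{0}=1$ and $B_k(0,\cdot)=1$), and the Jensen/Tonelli contraction argument all go through; the shortcut via Young's inequality \eqref{e:2.3} is also right, since $\|F_k(\cdot,t)\|_{L_k^1(\mathbb{R}^n)}=c_k^{-1}$ makes the constants collapse to exactly $\|H_k^t(f)\|_{L_k^p(\mathbb{R}^n)}\le\|f\|_{L_k^p(\mathbb{R}^n)}$. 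For part (ii), the multiplier identity $\mathcal{F}_k(H_k^tf)=e^{-t\|\cdot\|}\mathcal{F}_k(f)$ plus Plancherel gives the semigroup law on $L_k^2(\mathbb{R}^n)$, and density handles $1\le p<\infty$; for $p=\infty$, where density fails, it is the Chapman--Kolmogorov kernel identity $F_k(\cdot,t)\ast_k F_k(\cdot,s)=F_k(\cdot,t+s)$ (which you state) together with Fubini that closes the argument, and it would strengthen the write-up to say that step explicitly rather than as an ``equivalently.'' Finally, your caveat about strong $L^\infty$ continuity is well taken and is not a defect of your proof but of the statement as loosely recalled: uniform convergence cannot hold for arbitrary $f\in L_k^\infty(\mathbb{R}^n)$, and your restriction to the sup-norm closure of $A_k(\mathbb{R}^n)$ --- which contains $C_0(\mathbb{R}^n)$, since $\mathcal{S}(\mathbb{R}^n)\subset A_k(\mathbb{R}^n)$ by the rapid decay of $\mathcal{F}_k(\mathcal{S}(\mathbb{R}^n))$ --- is exactly the qualification under which the result holds and under which the paper actually uses it later (every subsequent theorem assumes $f\in C_0(\mathbb{R}^n)$ when $p=\infty$).
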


\section{Bessel and Flett potentials associated to \texorpdfstring{$(k,1)$}{(k,1)}-generalized Fourier transform } \label{S:3}
The Bessel and Flett potential are essential for studying different types of singularities. We start by defining the Bessel potential using the heat kernel and then employ the Poisson integral to construct the Flett potential. Furthermore, a wavelet-like transform is introduced to establish the inversion formula for the Flett potential. 

\subsection{Bessel Potential}The \texorpdfstring{$(k,1)$}{(k,1)}-generalized Fourier Bessel potential is defined as
$$ \mathbf{I}^\alpha_k = (I- \|x\|\, \Delta_{k})^{-\frac{\alpha}{2}}, \,\, \alpha>0.$$
Further, we derive the convolution representation for  $ \mathbf{I}^\alpha_k$ which is defined by 
\begin{align}
\mathbf{I}^\alpha_k(f)=  g^\alpha_k\ast f,
 \label{Bessel-potential}
\end{align}
where $g_k^\alpha$ satisfies $\mathcal{F}_k( g^\alpha_k)(x) = \frac{1}{(1+\|x\|)^{\frac{\alpha}{2}}}$ and for any $f\in L_k^p(\mathbb{R}^n)$, $1\le p\le \infty$. Eventually, $g_k^\alpha$ can be described as:
 \begin{equation}\label{2.1}
g^\alpha_k(x) =  \frac{1}{\Gamma(\frac{\alpha}{2})}\int_0^\infty e^{-t}\,e^{\frac{-\|x\|}{t}}\,t^{-(n+2\gamma-1)}\, t^{\frac{\alpha}{2}}\, \frac{dt}{t}.
\end{equation}
 In the forthcoming proposition, we discuss the fundamental properties of the $g_k^{\alpha}.$
\begin{proposition} \label{Bessel kernal}
Let $g^\alpha_k$ be defined in \eqref{2.1}. Then  $g^\alpha_k(x)\ge0$, for all $x\in \mathbb{R}^n$ and $g_k^{\alpha}\in L_{k, \text{rad}}^1(\mathbb{R}^n)$.
\end{proposition}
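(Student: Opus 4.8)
The plan is to dispatch the three assertions---pointwise non-negativity, radiality, and $L^1_k$-integrability---in increasing order of difficulty, the first two being essentially immediate from the defining formula \eqref{2.1} while the bulk of the work goes into the integrability. For non-negativity, note that for $\alpha>0$ we have $\Gamma(\alpha/2)>0$, and for every $t>0$ each factor $e^{-t}$, $e^{-\|x\|/t}$, $t^{-(n+2\gamma-1)}$, $t^{\alpha/2}$ in the integrand of \eqref{2.1} is strictly positive; hence the integrand is non-negative and so is $g^\alpha_k(x)$, with the understanding that the value $+\infty$ is a priori permitted (this can only occur at $x=0$ for small $\alpha$, a single point irrelevant to the $L^1$ claim). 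Radiality is equally transparent: the variable $x$ enters \eqref{2.1} only through $\|x\|$, so $g^\alpha_k(x)=(g^\alpha_k)_0(\|x\|)$ for a function on $[0,\infty)$, which is exactly the defining property of a radial function.

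For the integrability I would compute the $L^1_k$-norm directly. Since the integrand is non-negative, Tonelli's theorem justifies interchanging the integration in $x$ against $\upsilon_k(x)\,dx$ with the integration in $t$, giving
\[
\int_{\mathbb{R}^n} g^\alpha_k(x)\,\upsilon_k(x)\,dx = \frac{1}{\Gamma(\alpha/2)}\int_0^\infty e^{-t}\,t^{\frac{\alpha}{2}-(n+2\gamma-1)-1}\left(\int_{\mathbb{R}^n} e^{-\|x\|/t}\,\upsilon_k(x)\,dx\right)dt.
\]
The inner integral is handled by passing to spherical coordinates $x=rx'$ as in \eqref{wt-1} and using that $\upsilon_k$ is homogeneous of degree $2\gamma-1$; the substitution $r=ts$ then yields $\int_{\mathbb{R}^n} e^{-\|x\|/t}\,\upsilon_k(x)\,dx = S_k\,\Gamma(2\gamma+n-1)\,t^{2\gamma+n-1}$, where $S_k=\int_{\mathbb{S}^{n-1}}\upsilon_k(x')\,d\omega(x')$. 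Substituting this back, the powers of $t$ collapse to $t^{\alpha/2-1}$, and the remaining integral $\int_0^\infty e^{-t}\,t^{\alpha/2-1}\,dt=\Gamma(\alpha/2)$ cancels the prefactor, leaving $\|g^\alpha_k\|_{L^1_k}=S_k\,\Gamma(2\gamma+n-1)<\infty$.

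I do not expect a genuine obstacle here, as the argument is essentially bookkeeping once Tonelli is invoked. The one point demanding care is the exponent arithmetic in the $t$-integral---tracking that $t^{-(n+2\gamma-1)}\cdot t^{\alpha/2}\cdot t^{-1}\cdot t^{2\gamma+n-1}=t^{\alpha/2-1}$---together with confirming that $2\gamma+n-1>0$ (which holds since $\gamma\ge 0$ and $n\ge 1$), so that $\Gamma(2\gamma+n-1)$ is finite and the inner radial integral converges. The finiteness of the norm also retroactively confirms that $g^\alpha_k$ is finite almost everywhere, in harmony with the pointwise non-negativity already established.
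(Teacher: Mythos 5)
Your proof is correct and follows essentially the same route as the paper: positivity and radiality are read off from \eqref{2.1}, and then Tonelli's theorem together with the homogeneity of $\upsilon_k$ collapses the double integral into a single Gamma integral. The only difference is bookkeeping: you evaluate the inner integral explicitly as $S_k\,\Gamma(n+2\gamma-1)\,t^{n+2\gamma-1}=c_k^{-1}t^{n+2\gamma-1}$ and obtain $\|g^\alpha_k\|_{L_k^1(\mathbb{R}^n)}=c_k^{-1}$, whereas the paper's proof inserts the normalizing factor $c_k$ in front of the kernel (inconsistently with \eqref{2.1} as displayed) so that the norm comes out to exactly $1$; both computations establish the claimed finiteness.
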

\begin{proof}
Given that
\begin{equation*}
g^\alpha_k(x) =  \frac{c_k}{\Gamma(\frac{\alpha}{2})}\int_0^\infty e^{-t}\,e^{\frac{-\|x\|}{t}}\,t^{-(n+2\gamma-1)}\, t^{\frac{\alpha}{2}}\, \frac{dt}{t}.
\end{equation*} 
The positivity and radial properties of the function $g_k^{\alpha}$  directly follows from \eqref{2.1}. In view of Tonelli's theorem, we see that 
\begin{align}
\nonumber    \int_{\mathbb{R}^n}  g^\alpha_k(x)\, \upsilon_k(x)\,dx &= \frac{c_k}{\Gamma(\frac{\alpha}{2})}\int_0^\infty   e^{-t}\, t^{\frac{\alpha}{2}}\,t^{-(n+2\gamma-1)}\, \left(\int_{\mathbb{R}^n}e^{\frac{-\|x\|}{t}}\,  \upsilon_k(x)\,dx \right)\frac{dt}{t}\\
    & = \frac{1}{\Gamma(\frac{\alpha}{2})}\int_0^\infty e^{-t}\, t^{\frac{\alpha}{2}-1}\,dt = 1 \label{e3.3}
\end{align}
which completes the proof.
\end{proof}
The boundedness and semigroup property of the Bessel potential are furnished in the following theorem.
\begin{theorem} \label{t:3.2}
If $f\in L_k^p(\mathbb{R}^n)$, $1 \le p \le \infty$ and $\alpha>0$, then $ \mathbf{I}^\alpha_k$ is a bounded operator from $L_k^p(\mathbb{R}^n)$ to itself and we have
\begin{equation*}
    \| \mathbf{I}^\alpha_k (f)\|_{L_k^p(\mathbb{R}^n)} \le c_k\,\|f\|_{L_k^p(\mathbb{R}^n)}. 
\end{equation*}
Further, for $\alpha,\beta>0$ $$ \mathbf{I}^\alpha_k(\mathbf{I}^\beta_k(f)) = \mathbf{I}^{\alpha+\beta}_k(f).$$
\end{theorem}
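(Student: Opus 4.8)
The plan is to treat the two assertions separately, using throughout the convolution representation $\mathbf{I}^\alpha_k(f)=g^\alpha_k\ast_k f$ from \eqref{Bessel-potential}. For the boundedness I would simply feed the kernel into the mapping property of the convolution. Proposition \ref{Bessel kernal} already guarantees $g^\alpha_k\in L^1_{k,\mathrm{rad}}(\mathbb{R}^n)$, and the computation \eqref{e3.3} normalizes it, giving $\|g^\alpha_k\|_{L^1_k(\mathbb{R}^n)}=\int_{\mathbb{R}^n}g^\alpha_k(x)\,\upsilon_k(x)\,dx=1$. Since $f\in L^p_k(\mathbb{R}^n)$ with $1\le p\le\infty$, Proposition \ref{t:2.3}$(i)$ applies verbatim and yields $\|\mathbf{I}^\alpha_k(f)\|_{L^p_k(\mathbb{R}^n)}=\|g^\alpha_k\ast_k f\|_{L^p_k(\mathbb{R}^n)}\le c_k\,\|g^\alpha_k\|_{L^1_k(\mathbb{R}^n)}\,\|f\|_{L^p_k(\mathbb{R}^n)}=c_k\,\|f\|_{L^p_k(\mathbb{R}^n)}$. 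No singular behaviour of the kernel enters, since the whole estimate rests only on the $L^1$-normalization.

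For the semigroup identity I would reduce everything to the kernel-level statement $g^\alpha_k\ast_k g^\beta_k=g^{\alpha+\beta}_k$. Granting this, the boundedness just proved ensures $\mathbf{I}^\beta_k(f)\in L^p_k(\mathbb{R}^n)$, so $\mathbf{I}^\alpha_k$ may be applied to it, and commutativity and associativity of $\ast_k$ (Proposition \ref{t:2.3}$(ii)$, with associativity justified by Fubini--Tonelli as the kernels are non-negative and $L^1$ while $f\in L^p_k$) give $\mathbf{I}^\alpha_k(\mathbf{I}^\beta_k(f))=g^\alpha_k\ast_k(g^\beta_k\ast_k f)=(g^\alpha_k\ast_k g^\beta_k)\ast_k f=g^{\alpha+\beta}_k\ast_k f=\mathbf{I}^{\alpha+\beta}_k(f)$. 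To establish the kernel identity I would pass to the transform side: using the multiplier formula $\mathcal{F}_k(g^\alpha_k)(x)=(1+\|x\|)^{-\alpha/2}$ from \eqref{Bessel-potential} together with Proposition \ref{t:2.3}$(iii)$, one obtains $\mathcal{F}_k(g^\alpha_k\ast_k g^\beta_k)=\mathcal{F}_k(g^\alpha_k)\,\mathcal{F}_k(g^\beta_k)=(1+\|x\|)^{-\alpha/2}(1+\|x\|)^{-\beta/2}=(1+\|x\|)^{-(\alpha+\beta)/2}=\mathcal{F}_k(g^{\alpha+\beta}_k)$, and injectivity of $\mathcal{F}_k$ (from the involutory and Plancherel properties of Proposition \ref{prop-Fk}) forces $g^\alpha_k\ast_k g^\beta_k=g^{\alpha+\beta}_k$.

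The main obstacle is a domain mismatch: the convolution theorem available to us, Proposition \ref{t:2.3}$(iii)$, is phrased for $L^2_k(\mathbb{R}^n)$, whereas $g^\alpha_k$ is only guaranteed to be an $L^1_k$ kernel, and its transform $(1+\|x\|)^{-\alpha/2}$ fails to be square-integrable against $\upsilon_k$ for small $\alpha$; hence I cannot quote $(iii)$ directly for all $\alpha>0$. The honest fix is to supply an $L^1$-version of the product rule for the radial pair $(g^\alpha_k,g^\beta_k)$. The natural route is the radial reduction of Proposition \ref{prop-Fk}$(iii)$, under which $\mathcal{F}_k$ becomes the Hankel transform $\mathcal{H}_{n+2\gamma-2}$ and the $(k,1)$-convolution of radial $L^1$ functions corresponds to a Hankel convolution for which $\widehat{g\ast h}=\hat g\,\hat h$ is classical; alternatively one approximates $g^\alpha_k$ and $g^\beta_k$ in $L^1_k(\mathbb{R}^n)$ by radial functions in $L^1_k(\mathbb{R}^n)\cap L^2_k(\mathbb{R}^n)$ and passes to the limit. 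Once the product-of-multipliers identity $\mathcal{F}_k(g^\alpha_k\ast_k g^\beta_k)=(1+\|x\|)^{-(\alpha+\beta)/2}$ is secured, injectivity closes the kernel identity, and because this identity holds independently of $f$, the operator equation $\mathbf{I}^\alpha_k\mathbf{I}^\beta_k=\mathbf{I}^{\alpha+\beta}_k$ follows on all of $L^p_k(\mathbb{R}^n)$, $1\le p\le\infty$.
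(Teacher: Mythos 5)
Your proof is correct, and both of its pillars coincide with the paper's: the boundedness half (the kernel normalization \eqref{e3.3} from Proposition \ref{Bessel kernal} fed into Proposition \ref{t:2.3}$(i)$) is literally the paper's argument, and the semigroup half rests, exactly as in the paper, on the multiplier identity $(1+\|\xi\|)^{-\alpha/2}(1+\|\xi\|)^{-\beta/2}=(1+\|\xi\|)^{-(\alpha+\beta)/2}$. The organization of the second half is where you genuinely differ. The paper fixes $f\in\mathcal{S}(\mathbb{R}^n)$, unfolds $\mathbf{I}^\alpha_k(\mathbf{I}^\beta_k(f))$ through the translation formula \eqref{eq:2.1} and Proposition \ref{t:2.3}$(iii)$, and refolds the resulting Fourier integral via \eqref{eq:2.3}; you instead isolate the $f$-independent kernel identity $g^\alpha_k\ast_k g^\beta_k=g^{\alpha+\beta}_k$, proved by injectivity of $\mathcal{F}_k$, and then conclude by associativity of $\ast_k$. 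Your route is cleaner, and your explicit treatment of the $L^2$-versus-$L^1$ mismatch in Proposition \ref{t:2.3}$(iii)$ is a genuine improvement over the source: the paper runs into exactly the same issue and glosses it, even asserting that $g^{\alpha+\beta}_k$ belongs to $L_k^2(\mathbb{R}^n)$, which by Plancherel requires $\alpha+\beta>n+2\gamma-1$ and is false in general, so your Hankel-reduction or $L^1\cap L^2$-approximation fix addresses a real defect. One caveat on your side: associativity of $\ast_k$ is nowhere stated in the paper and is not a bare Fubini--Tonelli fact, since $\ast_k$ is defined through the generalized translation $\tau_x$; to unfold $g^\alpha_k\ast_k(g^\beta_k\ast_k f)$ one needs the positivity and symmetry of $\tau$ on radial functions (Proposition \ref{p:2.1}) together with one more pass through the Fourier side --- essentially the computation the paper performs for Schwartz $f$ --- so this step deserves to be written out rather than cited as routine; with it in place, your conclusion that the operator identity holds on all of $L_k^p(\mathbb{R}^n)$, $1\le p\le\infty$, goes through.
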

\begin{proof}
Let $f\in L_k^p(\mathbb{R}^n)$ for $1\le p \le \infty.$ In conjuction with 
\eqref{Bessel-potential}, \eqref{e3.3} and Proposition \ref{t:2.3} , we have
\begin{eqnarray*}
 \| \mathbf{I}^\alpha_k (f)\|_{L_k^p(\mathbb{R}^n)} &=&\| g^\alpha_k\ast_k f\|_{{L_k^p(\mathbb{R}^n)}}\\
 &\leq & c_k\|f\|_{L_k^p(\mathbb{R}^n)}\, \|g^\alpha_k\|_{L_k^1(\mathbb{R}^n)}\\
 &=&c_k\, \|f\|_{L_k^p(\mathbb{R}^n)}.
 \end{eqnarray*}
Thus, we can see that the $(k,1)$-generalized Fourier Bessel potentials $ \mathbf{I}^\alpha_k(f)$ are the bounded operators. Now, we are in a position to prove the semigroup property. Let $f\in \mathcal{S}(\mathbb{R}^n)$,
\begin{eqnarray*}
\mathbf{I}^\alpha_k(\mathbf{I}^\beta_k(f))(x)  &=& g^\alpha_k\ast\mathbf{I}^\beta_k(f)(x)\\&=& c_k\int_{\mathbb{R}^n} g_k^\alpha(y)\, \tau_x(g_k^\beta \ast f)(y)\,\upsilon_k(y)\,dy.
\end{eqnarray*} 
Invoking  \eqref{eq:2.1} and  the third assertion of Proposition \ref{t:2.3} in the above equation
\begin{eqnarray*}
\mathbf{I}^\alpha_k(\mathbf{I}^\beta_k(f))(x)&=& c_k^2 \int_{\mathbb{R}^n} \int_{\mathbb{R}^n}g^\alpha_k (y)\,B_k(x,\xi)\,  B_k(y,\xi)\, \mathcal{F}_k(g_k^\beta )(\xi)\, \mathcal{F}_k(f)(\xi)\, \upsilon_k(\xi)\, \upsilon_k(y)\,d\xi\,dy.
\end{eqnarray*}
 Fubini's theorem allows us to change the order of integration, and then we have
\begin{equation*}
\mathbf{I}^\alpha_k(\mathbf{I}^\beta_k(f))(x)  = c_k\int_{\mathbb{R}^n} \mathcal{F}_k(g_k^\alpha)(\xi)\, \mathcal{F}_k(g_k^\beta)(\xi)\,\mathcal{F}_k(f)(\xi)\,B_k(x,\xi)\,\upsilon_k(\xi)\,d\xi.
\end{equation*}
 On account of that $\mathcal{F}_k( g^\alpha_k)(x) = \frac{1}{(1+\|x\|)^{\frac{\alpha}{2}}}$, we obtain
\begin{eqnarray*}
&=& c_k\int_{\mathbb{R}^n} \frac{1}{(1+\|x\|)^{\frac{\alpha+\beta}{2}}}\,\mathcal{F}_k(f)(\xi)\,B_k(x,\xi)\,\upsilon_k(\xi)\,d\xi\\
&=& c_k \int_{\mathbb{R}^n} \mathcal{F}_k(g^{\alpha+\beta}_k)(\xi)\,\mathcal{F}_k(f)(\xi)\, B_k(x,\xi)\,v_k(\xi)\,d\xi.
\end{eqnarray*}
One can  easily check that $g^{\alpha+\beta}_k$ belongs to $L_k^2(\mathbb{R}^n)$. Because of \eqref{eq:2.3}, the above equation can be revamped regarding convolution. That is,
\begin{eqnarray*}
\mathbf{I}^\alpha_k(\mathbf{I}^\beta_k(f))(x)&=& ( g^{\alpha+\beta}_k\ast f)(x)\\
&=& \mathbf{I}^{\alpha+\beta}_k(f)(x).
\end{eqnarray*}
This completes the proof.
\end{proof}
The convolution definition of the Bessel potential, along with the explicit formula for $g_k^{\alpha}$, leads us to express the Bessel potential in terms of the heat transform. Also, we establish the relationship between the heat transform and the Bessel potential in the following theorem:
\begin{theorem}
If   $f\in L_k^p(\mathbb{R}^n)$, $1 \le p < \infty$, $f \in C_0(\mathbb{R}^n)$ for $p=\infty$ and $\alpha>0$, then
\begin{enumerate}[$(i)$]
\item  $(k,1)$- generalized Fourier Bessel potential $ \mathbf{I}^\alpha_k(f) $ of order $\alpha$ 
is expressed as
\begin{equation}\label{e:3.3}
 \mathbf{I}^\alpha_k(f) (x) =   \frac{1}{\Gamma(\frac{\alpha}{2})}\int_0^\infty t^{\frac{\alpha}{2}-1}\,e^{-t}\,H_k^t(f)(x)\,dt.
\end{equation}
\item For $s,t>0$, the heat transform of $ \mathbf{I}^\alpha_k(f)$ is the function $H_k^s( \mathbf{I}^\alpha_k)$ given by 
\begin{equation*}
    H_k^s( \mathbf{I}^\alpha_k(f))(x) = \frac{1}{\Gamma(\frac{\alpha}{2})}\int_0^\infty t^{\frac{\alpha}{2}-1}\,e^{-t}\, H^{s+t}_k(f)(x)\,dt.
\end{equation*}
Moreover, $H_k^s( \mathbf{I}^\alpha_k(f))(x)=\mathbf{I}^\alpha_k(H_k^s(f))(x)$.
\end{enumerate}
\end{theorem}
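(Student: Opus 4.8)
The plan is to realise the Bessel kernel $g_k^\alpha$ as a continuous superposition of heat kernels and then push that superposition through the convolution, so that each constituent convolution collapses into a heat transform. The structural input is the identity $F_k(\cdot,t)\ast_k f = H_k^t(f)$: by commutativity of $\ast_k$ (Proposition \ref{t:2.3}) together with the symmetry $h_k(x,y,t)=h_k(y,x,t)$ that is visible from the representation \eqref{e:3.1}, the convolution $f \ast_k F_k(\cdot,t)$ and the defining integral of $H_k^t(f)$ coincide. Rewriting the integrand of \eqref{2.1} through the heat-kernel relation $e^{-\|x\|/t}\,t^{-(n+2\gamma-1)} = c_k^{-1}F_k(x,t)$, I would identify $g_k^\alpha = \frac{1}{\Gamma(\alpha/2)}\int_0^\infty e^{-t}\,t^{\alpha/2-1}\,F_k(\cdot,t)\,dt$, the constants cancelling against the normalisation of $g_k^\alpha$.

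For (i), I would substitute the translation form \eqref{e:2.1} of $\mathbf{I}_k^\alpha(f)=g_k^\alpha\ast_k f$ and insert the superposition formula for $g_k^\alpha$, producing an iterated integral in the spatial variable $y$ and the time variable $t$. Interchanging the two integrations and collapsing the inner spatial integral into $H_k^t(f)(x)$ via $F_k(\cdot,t)\ast_k f = H_k^t(f)$ yields exactly \eqref{e:3.3}. The main obstacle is the legitimacy of this interchange: I would justify it by Tonelli applied to the nonnegative integrand, using the positivity of $F_k$ and of $g_k^\alpha$ (Proposition \ref{Bessel kernal}), and then control everything through the fact that $g_k^\alpha\in L_{k,\mathrm{rad}}^1(\mathbb{R}^n)$ together with the contraction estimate $\|H_k^t(f)\|_{L_k^p}\le\|f\|_{L_k^p}$, which gives $\int_0^\infty e^{-t}t^{\alpha/2-1}\|H_k^t(f)\|_{L_k^p}\,dt\le\Gamma(\alpha/2)\,\|f\|_{L_k^p}<\infty$. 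This finite majorant both secures the Fubini step and shows that the representing integral converges (in norm, and pointwise under the stated hypotheses on $f$).

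For (ii), I would apply $H_k^s$ to the representation obtained in (i) and move it inside the $t$-integral. Since $H_k^s$ is a bounded, indeed contractive, operator realised itself as convolution with $F_k(\cdot,s)$, this interchange is again a Fubini argument controlled by the same integrable majorant. The semigroup property $H_k^s\circ H_k^t = H_k^{s+t}$ then turns the integrand into $H_k^{s+t}(f)(x)$, giving the first displayed formula. For the final identity, I would run the identical computation on $\mathbf{I}_k^\alpha(H_k^s(f))$ via (i), obtaining $\frac{1}{\Gamma(\alpha/2)}\int_0^\infty e^{-t}t^{\alpha/2-1}H_k^t(H_k^s(f))(x)\,dt$; the commutativity of the heat semigroup, $H_k^t\circ H_k^s = H_k^{s+t} = H_k^s\circ H_k^t$, shows that both sides reduce to the same integral and therefore agree.
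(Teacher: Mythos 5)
Your proposal is correct, but for part $(i)$ it takes a genuinely different route from the paper. You stay entirely on the physical side: you read the Bessel kernel as a superposition $g_k^\alpha = \frac{1}{\Gamma(\alpha/2)}\int_0^\infty e^{-t}t^{\alpha/2-1}F_k(\cdot,t)\,dt$ of heat kernels, push it through the convolution by Tonelli--Fubini (with positivity of $F_k$, $g_k^\alpha$, and of translates of nonnegative radial functions from Proposition \ref{p:2.1}), and collapse each slice into $H_k^t(f)$ via the kernel symmetry $h_k(x,y,t)=h_k(y,x,t)$ visible in \eqref{e:3.1}. The paper instead passes to the Fourier side: it uses \eqref{eq:2.1} and $\mathcal{F}_k(F_k(\cdot,t))(\xi)=e^{-t\|\xi\|}$ to rewrite both $g_k^\alpha\ast_k f$ and the $t$-integral of heat transforms as the same iterated integral against $\mathcal{F}_k(f)(\xi)B_k(x,\xi)$, i.e.\ a multiplier-style comparison. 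Your argument buys something real: it makes sense directly for $f\in L_k^p$ with the explicit majorant $\int_0^\infty e^{-t}t^{\alpha/2-1}\|H_k^t(f)\|_{L_k^p}\,dt\le\Gamma(\alpha/2)\|f\|_{L_k^p}$, whereas the paper's Fourier-side manipulation is really only literal for $f\in L_k^2$ (or Schwartz) and tacitly relies on density; the price is that you must invoke the heat-kernel symmetry and the normalization of $F_k$ explicitly (your remark about the constant $c_k$ is warranted --- the paper itself is inconsistent between \eqref{2.1} and the proof of Proposition \ref{Bessel kernal}, and your identification matches the normalized version with $\int_{\mathbb{R}^n}g_k^\alpha\,\upsilon_k=1$). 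For part $(ii)$ the two arguments coincide structurally, with one difference: you cite the semigroup identity $H_k^s\circ H_k^t=H_k^{s+t}$, which is legitimate since it is stated for $(H_k^t)_{t\ge 0}$ in the preliminaries, while the paper chooses to reprove it there by a Fourier computation; either is acceptable.
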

\begin{proof}$(i)$ 
Let $f\in L_k^p(\mathbb{R}^n)$ for $1\le p \le \infty$.  We consider the left-hand side of equation \eqref{e:3.3} 
\begin{eqnarray*}
 \mathbf{I}^\alpha_k(f)(x) &=& g_k^\alpha \ast f(x) \\ 
 &=& \frac{c_k}{\Gamma(\frac{\alpha}{2})} \int_{\mathbb{R}^n}\int_0^\infty e^{-t}\,e^{-\frac{\|y\|}{t}}\,t^{\frac{\alpha}{2}-1}\,t^{-(n+2\gamma-1)}\, \tau_xf(y)\,v_k(y)\,dt\,dy.
 \end{eqnarray*}
 In regard to Fubini's theorem, Proposition \ref{p:3.1}(ii) and  \eqref{eq:2.1}, we obtain
\begin{equation}\label{e:3.5}
 \mathbf{I}^\alpha_k(f)(x)= \frac{c_k}{\Gamma(\frac{\alpha}{2})} \int_0^\infty \int_{\mathbb{R}^n}e^{-t}\,t^{\frac{\alpha}{2}-1}\, e^{-t\|\xi\|}\,\mathcal{F}_k(f)(\xi)\,B_k(x,\xi)\,\upsilon_k(\xi)\,d\xi\,dt.
\end{equation}
Applying the definition heat transform, the right-hand side of \eqref{e:3.3} becomes
\begin{eqnarray*}
&&\frac{1}{\Gamma(\frac{\alpha}{2})}\int_0^\infty t^{\frac{\alpha}{2}-1}\,e^{-t}\,H_k^t(f)(x)\,dt\\
&=& \frac{c_k}{\Gamma(\frac{\alpha}{2})}\int_0^\infty\int_{\mathbb{R}^n}t^{\frac{\alpha}{2}-1}\,e^{-t}\,\tau_y(F_k(\cdot,t))(x)\,f(y)\,\upsilon_k(y)\,dy\,dt.
\end{eqnarray*}
The result follows from \eqref{e:3.1} and \eqref{e:3.5} applied to the above equality.\\

\noindent $(ii)$  By applying the definition of the heat transform and the semigroup property, we derive the expression for $H_k^s( \mathbf{I}^\alpha_k(f))$. Let us consider
\begin{equation*}
\mathbf{I}^\alpha_k(H_k^s(f))(x)= \frac{1}{\Gamma(\frac{\alpha}{2})}\int_0^\infty t^{\frac{\alpha}{2}-1}\,e^{-t}\,H_k^t(H_k^s(f))(x)\,dt.
\end{equation*}
It is enough to prove that $H_k^s(H_k^t(f))(x)= H^{s+t}_k(f)(x)$. By using Proposition \ref{p:3.1} and Proposition \ref{t:2.3}, we deduce that
\begin{eqnarray*}
 H_k^s(H_k^t(f))(x)&= &H_k^s(f\ast F_k(\cdot,t))(x)\\
 &=& (f\ast F_k(\cdot,t))\ast F_k(\cdot,s)(x)\\
 &=& c_k\int_{\mathbb{R}^n}\mathcal{F}_k(f\ast F_k(\cdot,t))(\xi)\,\mathcal{F}_k(F_k(\cdot,s))(\xi)\,B_k(x,\xi)\,\upsilon_k(\xi)\,d\xi\\
 &=& c_k\int_{\mathbb{R}^n}\mathcal{F}_k(f)(\xi)\,e^{-(t+s)\|\xi\|}\,B_k(x,\xi)\,\upsilon_k(\xi)\,d\xi\\
 &=&c_k\int_{\mathbb{R}^n}\mathcal{F}_k(f)(\xi)\,\mathcal{F}_k(F_k(\cdot,s+t))(\xi)\,B_k(x,\xi)\, \upsilon_k(\xi)\, d\xi\\
 &=& f\ast F_k(\cdot,s+t)(x).
\end{eqnarray*}
This gives the required result. Moreover, it is clear that $$ H_k^s( \mathbf{I}^\alpha_k(f))(x)=\mathbf{I}^\alpha_k(H_k^s(f))(x).$$
\end{proof}
\subsection{The Poisson integral transform}
This subsection serves as a prerequisite for defining the Flett potential. Recently, Ben Sa\"id studied the Poisson integral operator for the $(k,a)$-generalized Fourier transform on the real line \cite{Negzaoui}.  Here, we focus on the generalized Poisson integral operator $\mathcal{P}_k^t$ for $t>0$, applied to functions on $\mathbb{R}^n$  within the framework of the $(k,1)$-generalized Fourier transform.

 \begin{definition} \label{D:4.1}
Let $1\le p\le \infty$ and $f\in L^p_k(\mathbb{R}^n)$. Then the generalized Poisson integral operator $\mathcal{P}_k^t$ defined on $L^p_k(\mathbb{R}^n)$, is given by
\begin{equation*}
 \mathcal{P}_k^t(f)(x) =  P_k(t,\cdot){\ast} f(x), \quad x\in \mathbb{R}^n, 
\end{equation*}
where the  Poisson kernel is defined as
$$P_k(t,\xi):=C_{n,k}\, \frac{t}{\left(t^2+4\|\xi\|\right)^{2\gamma +n-\frac{1}{2}}}$$
with $C_{n,k}= \frac{4^{2\gamma +n-1}\,\Gamma(2\gamma +n-\frac{1}{2})}{\sqrt{\pi}}$.
\end{definition} 
It is immediately follows that $P_k(t,\cdot)$ is a positive radial function. In conjunction, we discuss some fundamental properties of the Poisson kernel $P_k(t,\cdot)$ which can be readily verified:
\begin{proposition} \label{p:4.1}
\begin{enumerate}[$(i)$]
\item  $\mathcal{F}_k(P_k(t,.))(\xi) = e^{-t\,\|\xi\|^{\frac{1}{2}}}$\quad for all $t>0.$  \\
\item  $\int_{\mathbb{R}^n} P_k(t,x)\,\upsilon_{k}(x)dx=1$. 

\end{enumerate}
\end{proposition}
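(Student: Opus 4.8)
The plan is to establish $(i)$ first and then read off $(ii)$ as the value at the origin. The starting point is that the Poisson kernel is nothing but the subordination of the heat kernel $F_k(\cdot,u)$, whose transform is already known. Concretely, I would use the classical subordination identity
\[
e^{-t\sqrt{\lambda}} \;=\; \frac{t}{2\sqrt{\pi}}\int_0^\infty u^{-3/2}\,e^{-t^2/(4u)}\,e^{-u\lambda}\,du,\qquad \lambda\ge 0,
\]
(which one checks from $\int_0^\infty u^{s-1}e^{-au-c/u}\,du = 2(c/a)^{s/2}K_s(2\sqrt{ac})$ with $s=-\tfrac12$, together with $K_{1/2}(z)=\sqrt{\pi/(2z)}\,e^{-z}$). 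Writing $\lambda=\|\xi\|$ and recalling from Proposition \ref{p:3.1}$(ii)$ that $e^{-u\|\xi\|}=\mathcal{F}_k(F_k(\cdot,u))(\xi)$, this suggests the kernel identity $P_k(t,\cdot)=\frac{t}{2\sqrt{\pi}}\int_0^\infty u^{-3/2}e^{-t^2/(4u)}F_k(\cdot,u)\,du$, the constant $C_{n,k}$ in Definition \ref{D:4.1} being precisely the one that makes this hold.

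To confirm that the subordination superposition reproduces the explicit kernel of Definition \ref{D:4.1}, I would substitute $F_k(x,u)=c_k u^{-(n+2\gamma-1)}e^{-\|x\|/u}$, collect the two exponentials into $e^{-(\|x\|+t^2/4)/u}$, and evaluate $\int_0^\infty u^{-(n+2\gamma+1/2)}e^{-(\|x\|+t^2/4)/u}\,du$ by the change of variable $v=1/u$ as the Gamma integral $\Gamma(n+2\gamma-\tfrac12)\,(\|x\|+t^2/4)^{-(n+2\gamma-1/2)}$; rewriting $\|x\|+t^2/4=(t^2+4\|x\|)/4$ returns exactly the profile $C_{n,k}\,t\,(t^2+4\|x\|)^{-(2\gamma+n-1/2)}$. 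Applying $\mathcal{F}_k$ to the kernel identity, interchanging the $u$-integral with the transform (legitimate by Tonelli, all integrands being nonnegative), and using Proposition \ref{p:3.1}$(ii)$ then gives $\mathcal{F}_k(P_k(t,\cdot))(\xi)=\int_0^\infty \tfrac{t}{2\sqrt{\pi}}u^{-3/2}e^{-t^2/(4u)}\,e^{-u\|\xi\|}\,du=e^{-t\|\xi\|^{1/2}}$, which is $(i)$. A self-contained alternative exploits that $P_k(t,\cdot)$ is radial: apply Proposition \ref{prop-Fk}$(iii)$ to reduce the transform to $\mathcal{H}_{n+2\gamma-2}$, and evaluate the resulting integral by the tabulated formula $\int_0^\infty w^{\nu+1}(w^2+a^2)^{-\mu-1}J_\nu(bw)\,dw=\frac{a^{\nu-\mu}b^{\mu}}{2^{\mu}\Gamma(\mu+1)}K_{\nu-\mu}(ab)$ with $\nu=n+2\gamma-2$, $\mu=\nu+\tfrac12$, and $K_{-1/2}(z)=\sqrt{\pi/(2z)}\,e^{-z}$.

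For $(ii)$, the fastest route is to specialize $(i)$ at $\xi=0$: since $B_k(0,\cdot)\equiv 1$, the defining integral of $\mathcal{F}_k$ gives $\mathcal{F}_k(P_k(t,\cdot))(0)=c_k\int_{\mathbb{R}^n}P_k(t,x)\,\upsilon_k(x)\,dx$, while $(i)$ forces the left-hand side to equal $e^{0}=1$; the normalization $c_k^{-1}=\int_{\mathbb{R}^n}e^{-\|\xi\|}\upsilon_k(\xi)\,d\xi$ then delivers the stated total mass. Equivalently one verifies $(ii)$ directly: using that $\upsilon_k$ is homogeneous of degree $2\gamma-1$ and passing to polar coordinates reduces the integral to $S_k\int_0^\infty r^{2\gamma+n-2}(t^2+4r)^{-(2\gamma+n-1/2)}\,dr$, which after $4r=t^2w$ is the Beta integral $B(2\gamma+n-1,\tfrac12)$; the powers of $t$ cancel and $C_{n,k}$ is exactly calibrated to normalize the result.

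The main obstacle is not conceptual but the special-function bookkeeping: identifying the correct subordinator $u\mapsto\frac{t}{2\sqrt{\pi}}u^{-3/2}e^{-t^2/(4u)}$ and checking that the Gamma-integral (respectively the Bessel-$K$) evaluation reproduces the precise constant $C_{n,k}$, together with a clean justification of the Tonelli/Fubini interchange. Once the subordination kernel identity is in hand, Proposition \ref{p:3.1}$(ii)$ makes the transform in $(i)$ immediate, and $(ii)$ is then essentially the evaluation of $(i)$ at the origin.
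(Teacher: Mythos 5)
Your proposal is correct in substance, but your primary route to part $(i)$ is genuinely different from the paper's. The paper's proof is essentially the argument you relegate to your ``self-contained alternative'': it invokes the involution $\mathcal{F}_k^{-1}=\mathcal{F}_k$, reduces the transform of the radial function $e^{-t\|\cdot\|^{1/2}}$ to the Hankel transform $\mathcal{H}_{n+2\gamma-2}$ via Proposition \ref{prop-Fk}$(iii)$, and evaluates the resulting Bessel integral in one step to recover $P_k(t,\xi)$. Your main argument instead realizes $P_k(t,\cdot)$ as a subordination of the heat kernels $F_k(\cdot,u)$ and pushes Proposition \ref{p:3.1}$(ii)$ through the subordinator, so that only elementary one-dimensional Gamma integrals are needed. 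What your route buys: it avoids tabulated Bessel-$K$ integrals, it makes the positivity and the semigroup law of $\mathcal{P}_k^t$ transparent, and it is precisely the $\beta=1$ instance of the Bernstein-theorem mechanism the paper itself deploys later for the $\beta$-semigroup kernel $W_k^{(\beta,t)}$ (identity \eqref{Property-2}), so it meshes with the paper's broader machinery; what the paper's route buys is brevity. For part $(ii)$ you and the paper argue identically --- evaluate $(i)$ at the origin using $B_k(0,\cdot)\equiv 1$ --- and your polar-coordinate Beta-integral computation is a worthwhile independent check that the paper does not carry out.

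One caveat, which you yourself flagged as the main obstacle: under the paper's printed normalizations ($F_k(\cdot,u)$ carrying a factor $c_k$ and the transform $\mathcal{F}_k$ carrying another $c_k$), your subordination superposition $\frac{t}{2\sqrt{\pi}}\int_0^\infty u^{-3/2}e^{-t^2/(4u)}F_k(\cdot,u)\,du$ equals $c_k\,P_k(t,\cdot)$ rather than $P_k(t,\cdot)$, and evaluating $(i)$ at the origin yields $c_k\int_{\mathbb{R}^n}P_k(t,x)\,\upsilon_k(x)\,dx=1$, not mass $1$; indeed your own Beta-integral check produces $S_k\,\Gamma(n+2\gamma-1)=c_k^{-1}$. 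So the word ``exactly'' in your verification is off by a factor $c_k$. This is not a defect peculiar to your argument --- the paper's Proposition \ref{p:3.1}$(ii)$, its statement of part $(ii)$ here, and its own proof carry the same inconsistency --- but a fully rigorous write-up should fix one coherent normalization and track $c_k$ through both parts rather than appeal to the constant ``being precisely the one that makes this hold.''
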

\begin{proof}
$(i)$. We need to prove that $\mathcal{F}^{-1}_k(e^{-t\|\cdot\|^\frac{1}{2}})(\xi)=P_k(t,\xi)$.
We know that $\mathcal{F}_k(f) = \mathcal{F}^{-1}_k(f)$ and proceed with the proof by using the connection between the Hankel transform and $\mathcal{F}_k$, we have 
\begin{eqnarray*}
\mathcal{F}_k(e^{-t\|\cdot\|^\frac{1}{2}})(\xi) &=&  \mathcal{H}_{n+2\gamma-2}(e^{-t\|\cdot\|^\frac{1}{2}})(\xi)\\
&=& \int_0^ \infty e^{-t\sqrt{u}}\, \left(\sqrt{u\xi}\right)^{n+2\gamma-2}\,J_{n+2\gamma-2}\left(2\sqrt{u\xi}\right)\,u^{n+2\gamma-2}\,du\\
&=& C_{n,k}\, \frac{t}{\left(t^2+4\|\xi\|\right)^{2\gamma +n-\frac{1}{2}}}.
\end{eqnarray*}
The proof of $(ii)$  follows directly from the property of $B(0,y)=1.$
\end{proof}

We establish an upper bound for the family of convolutions formed by a dilated function  $g_{\epsilon}(x)=\epsilon^{-(n+2\gamma-1)}g(\frac{x}{\epsilon})$, using the maximal function as a tool. A similar result can be found in the Dunkl setting also \cite{Thangavelu}. 
 
\begin{theorem} \label{t:4.9}
    Let $f \in L_k^p(\mathbb{R}^n)$, $1\le p < \infty$ and $g \in A_k(\mathbb{R}^n)$ be a real valued radial function which satisfies $|g(y)| \le c\left( 1+4\|y\|\right)^{-(n+2\gamma-\frac{1}{2})}$. Then 
    \begin{align*}
        \sup_{\epsilon >0} |f \ast_k g_{\epsilon} (y)| \le c\,\mathcal{M}_kf(y). 
    \end{align*}
\end{theorem}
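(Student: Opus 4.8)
The plan is to dominate the signed kernel $g_\epsilon$ by a positive, radially non-increasing majorant, and then reduce the whole estimate to the elementary ball-average bound built into the definition of $\mathcal{M}_k$. First I would set $\phi(z):=c\,(1+4\|z\|)^{-(n+2\gamma-\frac12)}$, so that $\phi$ is positive, radial and non-increasing in $\|z\|$, with $|g|\le\phi$. A computation in spherical coordinates (using \eqref{wt-1}) shows $\phi\in L_{k,\text{rad}}^1(\mathbb{R}^n)$, since after multiplication by the density $s^{2\gamma+n-2}$ its radial profile decays like $s^{-3/2}$. Writing $\phi_\epsilon(z)=\epsilon^{-(n+2\gamma-1)}\phi(z/\epsilon)$, the pointwise bound $|g_\epsilon|\le\phi_\epsilon$ holds; since $\phi_\epsilon\pm g_\epsilon$ are non-negative radial functions, Proposition \ref{p:2.1}$(i)$ gives $\tau_y(\phi_\epsilon\pm g_\epsilon)\ge0$ and hence $|\tau_y g_\epsilon|\le\tau_y\phi_\epsilon$. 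Combining this with the convolution formula \eqref{e:2.1} (and commutativity) yields $|f\ast_k g_\epsilon(y)|\le c_k\int_{\mathbb{R}^n}|f(z)|\,\tau_y\phi_\epsilon(z)\,v_k(z)\,dz$, so it suffices to bound the right-hand side by $c\,\mathcal{M}_kf(y)$ uniformly in $\epsilon$.

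Next I would exploit that $\phi_\epsilon$ is radial and non-increasing, writing its radial profile $\psi_\epsilon$ through the layer-cake identity, which at the level of $\mathbb{R}^n$ reads $\phi_\epsilon(z)=\int_0^\infty\bigl(-\psi_\epsilon'(\rho)\bigr)\,\chi_{B(0,\rho)}(z)\,d\rho$ with $-\psi_\epsilon'\ge0$. Since each $\chi_{B(0,\rho)}$ is a bounded non-negative radial function, $\tau_y\chi_{B(0,\rho)}\ge0$ by Proposition \ref{p:2.1}$(i)$, and the interchange of $\tau_y$ with the $\rho$-integral, as well as the interchange of the $\rho$- and $z$-integrations, is legitimate by positivity (Tonelli). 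This gives $c_k\int_0^\infty(-\psi_\epsilon'(\rho))\bigl(\int_{\mathbb{R}^n}|f(z)|\,\tau_y\chi_{B(0,\rho)}(z)\,v_k(z)\,dz\bigr)d\rho$. The inner integral is exactly the quantity controlled by the maximal operator: because $\tau_y\chi_{B(0,\rho)}\ge0$, the definition of $\mathcal{M}_k$ (applied to $|f|$) bounds it by $v_k(B(0,\rho))\,\mathcal{M}_kf(y)$.

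It then remains to evaluate the constant and verify that it is $\epsilon$-independent. Using $v_k(B(0,\rho))=\frac{S_k}{2\gamma+n-1}\rho^{2\gamma+n-1}$ from \eqref{wt-1} and integrating by parts, I expect $\int_0^\infty(-\psi_\epsilon'(\rho))\,v_k(B(0,\rho))\,d\rho=S_k\int_0^\infty\psi_\epsilon(\rho)\rho^{2\gamma+n-2}\,d\rho=\|\phi_\epsilon\|_{L_k^1(\mathbb{R}^n)}$, the boundary terms vanishing by the decay of $\phi$. Finally, the homogeneity of $v_k$ of degree $2\gamma-1$ together with the dilation exponent $n+2\gamma-1$ forces $\|\phi_\epsilon\|_{L_k^1(\mathbb{R}^n)}=\|\phi\|_{L_k^1(\mathbb{R}^n)}$ for every $\epsilon$, so that $|f\ast_k g_\epsilon(y)|\le c_k\|\phi\|_{L_k^1(\mathbb{R}^n)}\,\mathcal{M}_kf(y)$; taking the supremum over $\epsilon>0$ gives the claim with $c=c_k\|\phi\|_{L_k^1(\mathbb{R}^n)}$.

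The main obstacle I anticipate is the rigorous justification of the layer-cake decomposition \emph{after} translation: the generalized translation $\tau_y$ is only known to be a bounded positive operator on radial $L^p$-functions (Theorem \ref{Thrm-translation} and Proposition \ref{p:2.1}$(i)$), so commuting it with the scale integral, rather than invoking a naive Fubini, should be carried out by approximating $\phi_\epsilon$ monotonically from below by finite non-negative combinations of indicators $\chi_{B(0,\rho)}$ and passing to the limit via positivity and monotone convergence. A secondary point is the vanishing of the boundary terms in the integration by parts, which is precisely where the exponent $n+2\gamma-\frac12$ in the hypothesis on $g$ is needed.
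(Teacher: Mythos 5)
Your proposal is correct, and it proves the theorem by a genuinely different decomposition than the paper. The paper's proof splits the kernel $g_{\epsilon}$ itself over dyadic annuli $B[0,\epsilon 2^{j}]\setminus B[0,\epsilon 2^{j-1}]$, applies the hypothesized decay on each annulus, invokes the maximal-function bound for each annular indicator, and sums the elementary series $\sum_{j}(2^{j})^{n+2\gamma-1}\left(1+2^{j+1}\right)^{-(n+2\gamma-\frac{1}{2})}<\infty$, finally letting the number of annuli tend to infinity via the boundedness of $\tau_y$. You instead dominate $g$ by the smooth radially non-increasing majorant $\phi$, expand $\phi_{\epsilon}$ as a continuous superposition of ball indicators via the layer-cake identity, and convert the constant into $\|\phi\|_{L_k^1(\mathbb{R}^n)}$ by integration by parts and dilation invariance. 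What each buys: your route gives the clean, sharp-looking constant $c_k\|\phi\|_{L_k^1(\mathbb{R}^n)}$ and is really the general Stein-type statement (any integrable radially decreasing majorant suffices, with the specific exponent $n+2\gamma-\frac{1}{2}$ entering only through integrability and the vanishing boundary term); the paper's dyadic route avoids differentiating the majorant and replaces your integration by parts with a crude but elementary series estimate. Both arguments hinge on the same delicate point, which you correctly flag: since $\tau_y$ is only known to be a positive, bounded operator on radial $L^1$ functions, commuting it with the superposition (your $\rho$-integral, the paper's infinite sum) must be justified by monotone approximation together with $L^1$-boundedness --- this is precisely the paper's passage to the limit $m\to\infty$. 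Two minor caveats, at the same level of rigor as the paper itself: first, inserting $|f|$ into the defining estimate of the maximal operator produces $\mathcal{M}_k(|f|)$ rather than $\mathcal{M}_k f$ as defined in the paper (the paper sidesteps this by assuming $f\ge 0$ at the outset and never returning to the general case); second, the domination $|\tau_y g_{\epsilon}|\le\tau_y\phi_{\epsilon}$ requires $\phi_{\epsilon}\pm g_{\epsilon}$ to be bounded, nonnegative, radial and integrable so that Proposition \ref{p:2.1}$(i)$ applies --- this holds because $A_k(\mathbb{R}^n)\subset L_k^1(\mathbb{R}^n)\cap L_k^{\infty}(\mathbb{R}^n)$, but it is worth saying explicitly.
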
 
\begin{proof}
    We begin the proof by assuming $f,g \ge 0.$ Representing the function $g_{\epsilon}$ in terms of characteristic function as
    \begin{align*}
        g_{\epsilon} (\xi) &= g_{\epsilon}(\xi)\chi_{B[0,\epsilon]} (\xi) + \sum_{j=1} ^{\infty} g_{\epsilon}(\xi) \chi_{\left(B[0,\epsilon 2^{j}]\setminus B[0,\epsilon 2^{j-1}]\right)}(\xi),
    \end{align*} where $B[0,r]$ is the closed ball of radius $r$ centered at the origin. 
    Consider the partial sum, 
    \begin{align*}
        \sum_{j=1} ^{m} g_{\epsilon}(\xi) &  \chi_{\left(B[0,\epsilon  2^{j}]\setminus B[0,\epsilon 2^{j-1}]\right)}(\xi) \\
        & \le c \epsilon^{-(n+2\gamma-1)} \sum_{j=1} ^{m} 
        \left( 1+ 2^{j+1}\right)^{-(n+2\gamma-\frac{1}{2})}
         \chi_{\left(B[0,\epsilon 2^{j}]\setminus B[0,\epsilon 2^{j-1}]\right)} (\xi) 
    \end{align*}
     and on integrating we obtain
 \begin{align}
    & \int_{\mathbb{R}^n} f(\xi)   \tau_y \left(
     \sum_{j=1} ^{m} g_{\epsilon}(\xi) 
     \chi_{\left(B[0,\epsilon  2^{j}]\setminus B[0,\epsilon 2^{j-1}]\right)}\right)(\xi) v_k(\xi)d\xi \notag
     \\ & \le c \epsilon^{-(n+2\gamma-1)} \sum_{j=1} ^{m}  \left( 1+ 2^{j+1}\right)^{-(n+2\gamma-\frac{1}{2})} \int_{\mathbb{R}^n} f(\xi)
       \tau_y \left( \chi_{\left(B[0,\epsilon  2^{j}]\setminus B[0,\epsilon 2^{j-1}]\right)}\right)(\xi) v_k(\xi)d\xi. \label{thrm3.6-1}
\end{align}
Using the definition of maximal function and \eqref{wt-1}, for all $j\in \mathbb{N}$, we have 
\begin{align}
    \int_{\mathbb{R}^n} f(\xi)
       \tau_y \left( \chi_{\left(B[0,\epsilon  2^{j}]\setminus B[0,\epsilon 2^{j-1}]\right)}\right)(\xi) v_k(\xi)d\xi \leq S_k\frac{(\epsilon 2^{j})^{n+2\gamma-1} }{n+2\gamma-1}  \mathcal{M}_k f(y). \label{thrm3.6-2}
\end{align}
Invoking the estimate \eqref{thrm3.6-2} in \eqref{thrm3.6-1} we get
\begin{align*}
    & c \frac{S_k}{n+2\gamma-1} 
    \sum_{j=1} ^{m}   \frac{( 2^{j})^{n+2\gamma-1} } {\left(1+ 2^{j+1}\right)^{(n+2\gamma-\frac{1}{2})}}
   \mathcal{M}_kf(y)   \le \Tilde{c}\mathcal{M}_kf(y).
 \end{align*}  Due to the convergence of the above series we can find a constant $\Tilde{c},$ independent of the value $m$.
We can obtain a similar bound by including the $g_{\epsilon}(\xi)\chi_{B[0,\epsilon]} (\xi)$ as $j=0$. Thus,
\begin{align}
    \int_{\mathbb{R}^n} f(\xi)   \tau_y \left(
     \sum_{j=0} ^{m} g_{\epsilon}(\xi) 
     \chi_{\left(B[0,\epsilon  2^{j}]\setminus B[0,\epsilon 2^{j-1}]\right)}\right)(\xi) v_k(\xi)d\xi \leq C \mathcal{M}_kf(y). \label{Thrm3.6-3}
\end{align} The boundedness of the translation operator $\tau_y$ (Theorem \ref{Thrm-translation}) will imply the desired result by passing the limit $m \rightarrow \infty$ over the inequality \eqref{Thrm3.6-3}.
\begin{align*}
   f \ast_kg_\epsilon(y) \leq C\mathcal{M}_kf(y) \text{ for all } \epsilon >0. 
\end{align*} 
\end{proof}  
We conclude this subsection by discussing some behavior of the Poisson semigroup with respect to the variables $x$ and $t$, along with its boundedness and semigroup properties, which will be further examined in the forthcoming theorem.
\begin{theorem} \label{T:4.3}
For every $t>0$, we have the following results:  
\begin{enumerate}[$(i)$]
\item [(i)] The family $\{\mathcal{P}_k^t\}_{t>0}$ is uniformly bounded in $L^p_k(\mathbb{R}^n)$. For $1\le p < \infty$, $f \in L^p_k(\mathbb{R}^n)$ and $f \in C_0(\mathbb{R}^n)$ for $p=\infty$ we have
\begin{equation*}
\|\mathcal{P}_k^t(f)\|_{L^p_k(\mathbb{R}^n)} \le c_k
\|f\|_{L^p_k(\mathbb{R}^n)}.
    \end{equation*}
\item[(ii)] Let $f\in L_k^p(\mathbb{R}^n)$ with $1\le p <\infty$. Then 
$$\mathop{\underset{x \in \mathbb{R}^n}{\text{sup}}}|\mathcal{P}_k^t(f)(x)| \le  c_k   t^{-\frac{2}{p}(n+2\gamma-1)} \|f\|_{L_k^p(\mathbb{R}^n)}.$$

\item[(iii)] If $f\in L_k^p(\mathbb{R}^n)$ and  $1\le p<\infty$, then $$\underset{t\rightarrow 0}{\lim} \,\mathcal{P}_k^t(f)(x) = f(x)$$ converges point wise a.e. For $f\in C_0(\mathbb{R}^n)$, the convergence is uniform on $\mathbb{R}^n$.
\item [(iv)] Semigroup property: $\mathcal{P}_k^t\circ\mathcal{P}_k^s\,(f)= \mathcal{P}_k^{t+s}(f)$,\quad $t,s>0.$
\item[(v)] $$\mathop{\underset{t>0}{\text{sup}}}|\mathcal{P}_k^t(f)(x)| \le c\, \mathcal{M}_kf.$$
\end{enumerate}
\end{theorem}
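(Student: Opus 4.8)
The plan is to treat the five assertions in the order (i), (iv), (v), (ii), (iii), since the maximal bound (v) is what upgrades the pointwise limit in (iii) to an almost-everywhere statement. Assertion (i) is immediate: writing $\mathcal{P}_k^t(f)=P_k(t,\cdot)\ast_k f$ and recalling from Proposition \ref{p:4.1}(ii) that $\|P_k(t,\cdot)\|_{L_k^1(\mathbb{R}^n)}=1$, the Young-type inequality of Proposition \ref{t:2.3}(i) (applicable because $P_k(t,\cdot)$ is radial and integrable) gives $\|\mathcal{P}_k^t(f)\|_{L_k^p}\le c_k\|f\|_{L_k^p}\|P_k(t,\cdot)\|_{L_k^1}=c_k\|f\|_{L_k^p}$. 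Assertion (iv) is then proved exactly as the heat-semigroup identity in the excerpt: by Proposition \ref{p:4.1}(i) we have $\mathcal{F}_k(P_k(t,\cdot))(\xi)=e^{-t\|\xi\|^{1/2}}$, so the convolution rule of Proposition \ref{t:2.3}(iii) yields $\mathcal{F}_k(P_k(t,\cdot)\ast_k P_k(s,\cdot))(\xi)=e^{-(t+s)\|\xi\|^{1/2}}=\mathcal{F}_k(P_k(t+s,\cdot))(\xi)$, whence $P_k(t,\cdot)\ast_k P_k(s,\cdot)=P_k(t+s,\cdot)$; associativity and commutativity of $\ast_k$ then give $\mathcal{P}_k^t\circ\mathcal{P}_k^s(f)=\mathcal{P}_k^{t+s}(f)$.

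Assertion (v) I would obtain as a corollary of Theorem \ref{t:4.9} applied to $g=P_k(1,\cdot)$. One checks that $g$ is a real radial function obeying the decay hypothesis $|g(y)|\le c(1+4\|y\|)^{-(n+2\gamma-1/2)}$, and that $g\in A_k(\mathbb{R}^n)$ because $\mathcal{F}_k(g)(\xi)=e^{-\|\xi\|^{1/2}}$ is rapidly decreasing. The key observation is that the dilation in Theorem \ref{t:4.9} reproduces the Poisson kernel after the reparametrization $\epsilon=t^2$: a direct computation shows $\epsilon^{-(n+2\gamma-1)}g(x/\epsilon)\big|_{\epsilon=t^2}=C_{n,k}\,t\,(t^2+4\|x\|)^{-(n+2\gamma-1/2)}=P_k(t,x)$. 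Hence $\sup_{t>0}|\mathcal{P}_k^t(f)(x)|=\sup_{\epsilon>0}|f\ast_k g_\epsilon(x)|\le c\,\mathcal{M}_kf(x)$.

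For the sup bound (ii), I would use commutativity to write $\mathcal{P}_k^t(f)(x)=c_k\int_{\mathbb{R}^n} f(y)\,\tau_x(P_k(t,\cdot))(y)\,v_k(y)\,dy$ and apply Hölder's inequality, obtaining $|\mathcal{P}_k^t(f)(x)|\le c_k\|f\|_{L_k^p}\,\|\tau_x(P_k(t,\cdot))\|_{L_k^{p'}}$. The computational heart is the norm $\|P_k(t,\cdot)\|_{L_k^{p'}}$: passing to polar coordinates (the radial surface measure contributes $S_k\,r^{2\gamma+n-2}\,dr$ as in \eqref{wt-1}) and substituting $s=4r/t^2$ collapses all the $t$-dependence into a single power, yielding $\|P_k(t,\cdot)\|_{L_k^{p'}}=C\,t^{-\frac{2}{p}(n+2\gamma-1)}$, with $C<\infty$ because the resulting Beta-type integral $\int_0^\infty s^{2\gamma+n-2}(1+s)^{-p'(2\gamma+n-1/2)}\,ds$ converges for every $p'\ge 1$. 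The main obstacle lies precisely here: controlling $\|\tau_x(P_k(t,\cdot))\|_{L_k^{p'}}$ by $\|P_k(t,\cdot)\|_{L_k^{p'}}$ uniformly in $x$. Theorem \ref{Thrm-translation} supplies this directly only for $p'\le 2$, i.e. $p\ge 2$; for $1\le p<2$ one must instead exploit the positivity and integral preservation of $\tau_x$ on nonnegative radial functions (Proposition \ref{p:2.1}(i)--(ii)) to argue that $\tau_x$ acts as an averaging contraction on each $L_k^{p'}$.

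Finally, (iii) follows the classical approximate-identity scheme. Since $P_k(t,\cdot)$ has unit mass and concentrates at the origin as $t\to 0$ (by the same scaling used in (v)), I would first prove uniform convergence $\mathcal{P}_k^t(f)\to f$ for $f\in C_0(\mathbb{R}^n)$ by writing $\mathcal{P}_k^t(f)(x)-f(x)=c_k\int_{\mathbb{R}^n}[\tau_x f(y)-f(x)]\,P_k(t,y)\,v_k(y)\,dy$ and splitting the integral over a small ball, where uniform continuity controls the increment, and over its complement, where the tail mass $\int_{\|y\|\ge\delta}P_k(t,y)\,v_k(y)\,dy\to 0$. For general $f\in L_k^p(\mathbb{R}^n)$ the maximal majorant from (v), together with the $L^p$-boundedness of $\mathcal{M}_k$, controls the oscillation of $\mathcal{P}_k^t(f)$, so density of $C_0(\mathbb{R}^n)$ in $L_k^p(\mathbb{R}^n)$ upgrades the convergence on the dense class to almost-everywhere convergence for all $f$.
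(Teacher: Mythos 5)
Your treatments of (i), (iv) and (v) are correct and essentially coincide with the paper's: (i) is Young's inequality with the unit mass of $P_k(t,\cdot)$, (iv) is the Fourier multiplier identity, and (v) is exactly the paper's application of Theorem \ref{t:4.9}; your verification that $\epsilon^{-(n+2\gamma-1)}P_k(1,\cdot/\epsilon)\big|_{\epsilon=t^2}=P_k(t,\cdot)$ is the detail the paper leaves implicit, and it checks out. The genuine problem is in (ii). Your plan applies H\"older directly to get $|\mathcal{P}_k^t(f)(x)|\le c_k\|f\|_{L_k^p}\|\tau_x(P_k(t,\cdot))\|_{L_k^{p'}}$, which forces you to bound the $L_k^{p'}$ norm of a \emph{translated} kernel uniformly in $x$. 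As you yourself note, Theorem \ref{Thrm-translation} covers only $p'\le 2$, and your proposed repair for $1\le p<2$ --- that positivity and mass preservation (Proposition \ref{p:2.1}(i)--(ii)) make $\tau_x$ an ``averaging contraction'' on $L_k^{p'}$ --- is asserted, not proved. To run a Jensen-type argument you would need $\tau_x$ to be given pointwise by a sub-probability kernel (equivalently, a statement like $\tau_x 1\le 1$); mass preservation is a statement about integration in the \emph{other} variable and does not yield this, and no such representation is established in the paper. The paper sidesteps the whole issue with a kernel-weighted H\"older: write $|f\,\tau_xP_k(t,\cdot)|=\left(|f|^p\,|\tau_xP_k(t,\cdot)|\right)^{1/p}\left(|\tau_xP_k(t,\cdot)|\right)^{1/q}$, bound the second factor by the (constant) $L_k^1$ mass of the translated kernel, and bound the first factor using
\begin{equation*}
\sup_{y}|\tau_xP_k(t,\cdot)(y)|\;\le\; c_k\int_{\mathbb{R}^n}e^{-t\|\xi\|^{1/2}}\,v_k(\xi)\,d\xi\;=\;c_k'\,t^{-2(n+2\gamma-1)},
\end{equation*}
which follows from the representation \eqref{eq:2.1} of the translation together with $|B_k|\le 1$ and Proposition \ref{p:4.1}(i). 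This works uniformly for all $1\le p<\infty$ and is the step you are missing.

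Two smaller remarks on (iii). First, your dense-class argument controls the increment by ``uniform continuity'' after writing $\mathcal{P}_k^t(f)(x)-f(x)=c_k\int[\tau_xf(y)-f(x)]P_k(t,y)v_k(y)\,dy$; in this generalized-translation setting $\tau_xf(y)$ is not $f$ evaluated at a translated point, so uniform continuity of $f$ does not directly bound $\tau_xf(y)-f(x)$. The paper avoids this by proving the dense-class (Schwartz) case on the Fourier side, $|\mathcal{P}_k^t(f)(x)-f(x)|\le\int|e^{-t\|\xi\|^{1/2}}-1|\,|\mathcal{F}_k(f)(\xi)|\,v_k(\xi)\,d\xi$, and applying dominated convergence. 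Second, your passage from the dense class to a.e. convergence for general $f\in L_k^p$ via the majorant in (v) is the standard and correct skeleton, but it additionally requires the $L_k^p$-boundedness (weak $(1,1)$ when $p=1$) of $\mathcal{M}_k$, which this paper never states and only inherits from the cited literature; it is worth flagging that dependence explicitly.
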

\begin{proof} 
\begin{enumerate}[$(i)$]
    \item The proof follows from the definition of the Poisson integral transform and  \eqref{e:2.3}.
    \item 
     Now we consider the point-wise bound of  $\mathcal{P}_k^{t}(f)(x)$ 
    \begin{align*}
       \big| \mathcal{P}_k^{t}(f)(x) \big| \le c_k \int_{\mathbb{R}^n} \big| \tau_x P_k{(t, \cdot)}(y)f(y)\big|v_k(y)dy. 
    \end{align*}
    In sight of H\"older's inequality, we get 
    \begin{align*}
        \int_{\mathbb{R}^n}& \big| \tau_x P_k{(t, \cdot)}(y)f(y)\big|v_k(y)dy
        \\ & \le \left( \int_{\mathbb{R}^n} |f(y)|^p|\tau_xP_k{(t, \cdot)}(y)| v_k(y)dy
        \right)^{1/p} \left( \int_{\mathbb{R}^n} |\tau_xP_k{(t, \cdot)}(y)|v_k(y)dy \right)^{1/q}. 
    \end{align*} Invoking the properties of the translation and the Proposition \ref{p:4.1}, the above inequality reduces to
    \begin{align*}
          & \int_{\mathbb{R}^n} \big| \tau_x P_k{(t, \cdot)}(y)f(y)\big|v_k(y)dy \\
          & \le  c_k \left( \int_{\mathbb{R}^n} |f(y)|^pv_k(y)dy \, c_k\int_{\mathbb{R}^n} e^{-t\|\xi\|^{1/2}} v_k(\xi)d\xi
          \right)^{1/p} \\
         & \le c_k \|f\|_{L_k^p(\mathbb{R}^n)}  t^{-\frac{2}{p}(n+2\gamma-1)}.
    \end{align*} 
This completes the proof of $(ii)$.\\

\item  We begin the proof by considering the case of $p=\infty$. Let $f\in \mathcal{S}(\mathbb{R}^n)$. Then we have
\begin{align*}
    \left| \mathcal{P}_k^t(f)(x)-f(x) \right|  \leq \int_{\mathbb{R}^n} | e^{-t\|\xi\|^{\frac{1}{2}}} -1 | \left|\mathcal{F}_k(f)(\xi)\right| v_k(\xi)d\xi .
\end{align*} Using the fact that $\mathcal{F}_k(\mathcal{S}(\mathbb{R}^n)$ is rapidly decreasing \cite{Grobachev}, along with the Dominated convergence theorem, we conclude that
\begin{align} \label{Poisson-L}
    \| \mathcal{P}_k^t(f)-f\|_{L_k^{\infty}(\mathbb{R}^n)} \rightarrow 0 \text{   as}\quad t \rightarrow 0.
\end{align}
Now, consider the case of $1\le p < \infty$.  Let $f\in C_c(\mathbb{R}^n)$. There exists an $r>0$ such that $ \text{supp}(f) \subset B(0,r)$, where $B(0,r)$ is the ball of radius $r$ centered at the origin. We notice the following 
\begin{align*}
   & \int_{\mathbb{R}^n\setminus B(0,r)}  \tau_{(t^2x)}  P_k(t,\xi) v_k(\xi)d\xi\\
    & = c_k \int_{\mathbb{R}^n \setminus B(0,r)} \int_{\mathbb{R}^n}  e^{-t\|y\|^{\frac{1}{2}}} B_k(t^2x, y)B_k(\xi,y)v_k(y)dy\, v_k(\xi)d\xi\\
    & = t^{-2(n+2\gamma-1)} c_k \int_{\mathbb{R}^n \setminus B(0,r)} \int_{\mathbb{R}^n} e^{-\|y\|^{\frac{1}{2}}} B_k(x, y)B_k(t^{-2}\xi,y)v_k(y)dy\, v_k(\xi)d\xi\\
    & = c_k \int_{\mathbb{R}^n \setminus B(0,\frac{r}{t^2})} \int_{\mathbb{R}^n} e^{-\|y\|^{\frac{1}{2}}} B_k(x, y)B_k(\xi,y)v_k(y)dy\, v_k(\xi)d\xi\\
    & =  \int_{\mathbb{R}^n\setminus B(0,\frac{r}{t^2})}  \tau_{x}  P_k(1,\xi) v_k(\xi)d\xi.
\end{align*} Since the Poisson kernel is integrable, the dominated convergence theorem leads to  the conclusion that 
\begin{align} \label{Poisson-1}
    \int_{\mathbb{R}^n\setminus B(0,r)}  \tau_{(t^2x)}  P_k(t,\xi) v_k(\xi)d\xi \rightarrow 0 \quad \text{    as $t\rightarrow 0$.}
\end{align}  
Consider the following 
\begin{align*}
    \| \mathcal{P}_k^t(f) -f\|_{L_k^p(\mathbb{R}^n)} & \le  \| \mathcal{P}_k^t(f) -f\|_{L_k^p(B(0,r))}+  \| \mathcal{P}_k^t(f) -f\|_{L_k^p(\mathbb{R}^n\setminus B(0,r))}\\
    & \le  \| \mathcal{P}_k^t(f) -f\|_{L_k^{\infty}(B(0,r))}\left( v_k(B(0,r)) \right)^{\frac{1}{p}} + \| \mathcal{P}_k^t(f) \|_{L_k^p(\mathbb{R}^n\setminus B(0,r))}.
\end{align*} 
Therefore $\| \mathcal{P}_k^t(f)-f\|_{L_k^p(\mathbb{R}^n)} \longrightarrow 0,$ as $t \longrightarrow 0$ from \eqref{Poisson-L} and \eqref{Poisson-1}.\\
\item  This result is an immediate consequence of  Definition \ref{D:4.1}, Proposition \ref{t:2.3} $(iii)$, and Proposition \ref{p:4.1} $(i)$.

\item 
It is straightforward from Theorem \ref{t:4.9}. Hence the proof is complete.
\end{enumerate}
\end{proof}
\subsection{Flett potentials }
 We introduce the $(k, 1)$-generalized Fourier Flett potentials associated with the differential operator $\|x\|\,\Delta_{k}$. The $(k,1)$-generalized Flett potentials are defined analogously to the classical Fourier Flett potentials \cite{Negzaoui}. To obtain an inverse formula for $(k, 1)$-generalized Fourier Flett potentials, a wavelet-like transform needs to be defined for the $(k, 1)$-generalized Fourier transform. 
\begin{definition}
The $(k, 1)$-generalized Fourier Flett potentials $\mathcal{I}_k^\alpha$ of positive order $\alpha$ can be defined by
$$\mathcal{I}_k^\alpha = (I+(-\|x\|\,\Delta_{k})^{\frac{1}{2}})^{-\alpha}, \quad \alpha>0.$$
To be more precise, we define $\mathcal{I}_k^\alpha$ as a convolution operator $\mathcal{I}_k^\alpha (f)=f\ast F^\alpha_k$, where  $F^\alpha_k(x)= \mathcal{F}^{-1}_{k}\left[(1+\|\xi\|^\frac{1}{2})^{-\alpha}\right](x)$.
\end{definition}
In another way, we can  express Flett's potential as
\begin{eqnarray} \label{e:5 .1}
\mathcal{F}_k(\mathcal{I}_k^\alpha (f))(\xi) = \mathcal{F}_k(f)(\xi) (1+\|\xi\|^\frac{1}{2})^{-\alpha}
\end{eqnarray}

Now, we give the integral representation for $(k, 1)$-generalized Fourier Flett potentials using the Poisson integral.
\begin{definition}
Let $f\in L_k^p(\mathbb{R}),\,\,1\le p \le \infty$   and for any $\alpha>0$. We define the $(k, 1)$- generalized Fourier Flett potentials $\mathcal{I}_k^\alpha f$  by
\begin{equation*}
 \mathcal{I}_k^\alpha(f)(x) = \frac{1}{\Gamma(\alpha)} \int_0^\infty t^{\alpha-1}\,e^{-t}\,\mathcal{P}_k^t(f)(x)\,dt. 
\end{equation*}
\end{definition} 
The following theorem provides some basic properties of  $(k, 1)$- generalized Fourier Flett potentials.
\begin{theorem} \label{t:3.10}
Let $f\in L_k^p(\mathbb{R})$, $1\le p <\infty$  and $f\in C_0(\mathbb{R})$  for $p=\infty$. Then 
\begin{enumerate}[$(i)$]
\item  $\mathcal{I}_k^\alpha$ is well defined on $L_k^p(\mathbb{R}^n)$ and it is a bounded operator,
\begin{equation*}
\|\mathcal{I}_k^\alpha (f)\|_{L_k^p(\mathbb{R}^n)} \le c_k\|f\|_{L_k^p(\mathbb{R}^n)}.
        \end{equation*}
\item The family $(\mathcal{I}_k^\alpha )_{\alpha\ge0}$ satisfies the semigroup property $\mathcal{I}_k^\alpha\circ\mathcal{I}_k^\beta = \mathcal{I}_k^{\alpha+\beta}$.
\end{enumerate}
\end{theorem}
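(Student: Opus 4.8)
The plan is to handle both assertions through the integral representation $\mathcal{I}_k^\alpha(f)(x) = \frac{1}{\Gamma(\alpha)}\int_0^\infty t^{\alpha-1} e^{-t}\,\mathcal{P}_k^t(f)(x)\,dt$, leaning entirely on the boundedness and semigroup properties of the Poisson family already recorded in Theorem~\ref{T:4.3}. For part $(i)$, I would establish well-definedness and the norm bound in one stroke by applying Minkowski's integral inequality to pull the $L_k^p$-norm inside the $t$-integral:
$$\|\mathcal{I}_k^\alpha(f)\|_{L_k^p(\mathbb{R}^n)} \le \frac{1}{\Gamma(\alpha)}\int_0^\infty t^{\alpha-1} e^{-t}\,\|\mathcal{P}_k^t(f)\|_{L_k^p(\mathbb{R}^n)}\,dt.$$
Invoking the uniform bound $\|\mathcal{P}_k^t(f)\|_{L_k^p}\le c_k\|f\|_{L_k^p}$ from Theorem~\ref{T:4.3}$(i)$, the right-hand side is dominated by $\frac{c_k}{\Gamma(\alpha)}\|f\|_{L_k^p}\int_0^\infty t^{\alpha-1}e^{-t}\,dt$; since this last integral is exactly $\Gamma(\alpha)$, it cancels the prefactor and leaves $\|\mathcal{I}_k^\alpha(f)\|_{L_k^p}\le c_k\|f\|_{L_k^p}$. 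The finiteness of this estimate simultaneously shows the defining integral converges absolutely in $L_k^p$, so $\mathcal{I}_k^\alpha$ is well defined.

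For part $(ii)$ I would compute the composition directly for $\alpha,\beta>0$ (the boundary value $\alpha=0$ being the identity, consistent with the multiplier). Writing out $\mathcal{I}_k^\alpha(\mathcal{I}_k^\beta(f))$ and using linearity together with the boundedness of $\mathcal{P}_k^t$ to carry it past the inner $s$-integral defining $\mathcal{I}_k^\beta$, then applying the semigroup property $\mathcal{P}_k^t\circ\mathcal{P}_k^s=\mathcal{P}_k^{t+s}$ from Theorem~\ref{T:4.3}$(iv)$, yields
$$\mathcal{I}_k^\alpha(\mathcal{I}_k^\beta(f)) = \frac{1}{\Gamma(\alpha)\Gamma(\beta)}\int_0^\infty\!\!\int_0^\infty t^{\alpha-1}s^{\beta-1}e^{-(t+s)}\,\mathcal{P}_k^{t+s}(f)\,ds\,dt.$$
I would then pass to the variable $u=t+s$ (with $t$ ranging over $[0,u]$), so the inner integral collapses to a Beta integral, $\int_0^u t^{\alpha-1}(u-t)^{\beta-1}\,dt = B(\alpha,\beta)\,u^{\alpha+\beta-1} = \frac{\Gamma(\alpha)\Gamma(\beta)}{\Gamma(\alpha+\beta)}\,u^{\alpha+\beta-1}$. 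Substituting, the Gamma factors cancel and leave $\frac{1}{\Gamma(\alpha+\beta)}\int_0^\infty u^{\alpha+\beta-1}e^{-u}\mathcal{P}_k^u(f)\,du = \mathcal{I}_k^{\alpha+\beta}(f)$, which is the claimed identity.

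The routine parts are the Gamma- and Beta-integral evaluations; the one point deserving care, and the main obstacle, is the legitimacy of the two interchanges, namely moving $\mathcal{P}_k^t$ inside the $s$-integral and applying Fubini to the resulting double integral. Both are justified by the absolute convergence established in $(i)$: the integrand is controlled in $L_k^p$-norm by $t^{\alpha-1}s^{\beta-1}e^{-(t+s)}\,c_k\|f\|_{L_k^p}$, which is integrable over $(0,\infty)^2$, so the Bochner-integral and Fubini manipulations are valid. As an independent cross-check requiring no interchange bookkeeping, one can verify the semigroup law on the transform side: by the multiplier identity \eqref{e:5 .1}, composing $\mathcal{I}_k^\alpha$ and $\mathcal{I}_k^\beta$ multiplies $\mathcal{F}_k(f)$ by $(1+\|\xi\|^{1/2})^{-\alpha}(1+\|\xi\|^{1/2})^{-\beta} = (1+\|\xi\|^{1/2})^{-(\alpha+\beta)}$, the multiplier of $\mathcal{I}_k^{\alpha+\beta}$, and injectivity of $\mathcal{F}_k$ on $L_k^2$ closes the argument on a dense subclass before extending to all of $L_k^p$ by the bound from $(i)$.
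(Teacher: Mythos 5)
Your proposal is correct, and part $(i)$ coincides with the paper's own argument: Minkowski's integral inequality, the uniform bound $\|\mathcal{P}_k^t(f)\|_{L_k^p}\le c_k\|f\|_{L_k^p}$ from Theorem~\ref{T:4.3}$(i)$, and cancellation against $\Gamma(\alpha)$. Where you diverge is part $(ii)$: the paper disposes of the semigroup property in one line by invoking the multiplier identity \eqref{e:5 .1}, i.e.\ $\mathcal{F}_k(\mathcal{I}_k^\alpha f)(\xi)=(1+\|\xi\|^{1/2})^{-\alpha}\mathcal{F}_k(f)(\xi)$, whereas your primary route is a direct real-variable computation: commute $\mathcal{P}_k^t$ past the Bochner integral defining $\mathcal{I}_k^\beta$, apply the Poisson semigroup law of Theorem~\ref{T:4.3}$(iv)$, and collapse the double integral through the substitution $u=t+s$ and the Beta-integral identity $\int_0^u t^{\alpha-1}(u-t)^{\beta-1}\,dt=\frac{\Gamma(\alpha)\Gamma(\beta)}{\Gamma(\alpha+\beta)}u^{\alpha+\beta-1}$. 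Both are sound, but they buy different things. The paper's multiplier argument is shorter, yet it is really an $L_k^2$ (or dense-subclass) statement, and the extension to general $L_k^p$ is left implicit; your appendix-style cross-check, which spells out exactly this density-plus-boundedness extension, is in effect a more careful rendering of what the paper omits. Your main computation, by contrast, operates directly on $L_k^p$ for all $1\le p\le\infty$ without ever applying $\mathcal{F}_k$ to $f$, and you correctly identify and discharge the only delicate points, namely the two interchanges, via the absolute-convergence bound $t^{\alpha-1}s^{\beta-1}e^{-(t+s)}c_k\|f\|_{L_k^p}$; it is also the same change-of-variables mechanism the paper itself deploys later inside the proof of the inversion Theorem~\ref{inversion-Flett}, so it fits the paper's toolkit naturally. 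In short: same proof for $(i)$, a more self-contained and fully justified (if longer) proof for $(ii)$, with the paper's one-line argument recovered as your cross-check.
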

\begin{proof}
$(i)$ We prove the boundedness of the Flett potentials by using Minkowski's integral inequality and Theorem \ref{T:4.3}(i).
\begin{eqnarray*}
\|\mathcal{I}_k^\alpha (f)\|_{L_k^p(\mathbb{R}^n)} &\le& \left(\int_{\mathbb{R}^n}\left| \frac{1}{\Gamma(\alpha)} \int_0^\infty t^{\alpha-1}\,e^{-t}\, \mathcal{P}_k^t(f)(x)\,dt\right|^p \upsilon_k(x)\,dx \right)^{\frac{1}{p}}\\
&\le& \frac{1}{\Gamma(\alpha)} \int_0^\infty \left( \int_{\mathbb{R}^n}\left|t^{\alpha-1}\,e^{-t}\,\mathcal{P}_k^t(f)(x)\,\right|^p \upsilon_k(x)\,dx \right)^{\frac{1}{p}}dt \\
&\le& c_k\,\|f\|_{L_k^p(\mathbb{R}^n)}.
\end{eqnarray*}
$(ii)$ We obtain the semigroup property  by employing  \eqref{e:5 .1}. This completes the proof.
\end{proof}
\subsection{Wavelet-like transform associated with Poisson integral}
Now, we define the wavelet-like transform associated with the $(k, 1)$-generalized Poisson integral. This transform leads to establishing an inverse formula for the $(k, 1)$-generalized Fourier Flett potentials.
\begin{definition} \label{Wavelet-flett}
    A signed Borel measure $\mu$ on $[0,\infty)$ is said to be a wavelet measure if 
\begin{equation*}
    \mu(\mathbb{R}^+) = \int_0^\infty d\mu(t) = 0\,\, \text{and}\,\, \|\mu\| = \int_0^\infty d|\mu|(t) <\infty.
\end{equation*}
For $f\in L_k^p(\mathbb{R}^n)$, the wavelet-like transform, generated by the wavelet measure $\mu$ and the  $(k, 1)$-generalized Poisson integral $\mathcal{P}_k^t$ associated with $\|x\|\, \Delta_{k}$ is defined as:
\begin{equation*}
    \mathcal{W}^\mu_{k}(f)(x,t) = \int_0^\infty e^{-ty}\, \mathcal{P}_k^{ty}(f)(x)\, d\mu(y), \,\, x\in \mathbb{R}^n,\, t>0.
\end{equation*}
 \end{definition}
\begin{proposition} \label{Wavelet-bounded}
Let $f\in L_k^p(\mathbb{R}^n)$,\, $1\le p\le \infty$. Then the wavelet-like transform $ \mathcal{W}^\mu_{k}$ is bounded on $ L_k^p(\mathbb{R}^n)$.
\end{proposition}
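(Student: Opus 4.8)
The plan is to estimate the $L_k^p$-norm of $\mathcal{W}^\mu_k(f)(\cdot,t)$ for each fixed $t>0$ directly from the definition, reducing everything to the uniform boundedness of the Poisson semigroup established in Theorem \ref{T:4.3}(i). Since $\mathcal{W}^\mu_k(f)(x,t)$ is an integral of the functions $x \mapsto e^{-ty}\mathcal{P}_k^{ty}(f)(x)$ against the measure $d\mu(y)$, the natural tool is Minkowski's integral inequality, exactly as in the proof of Theorem \ref{t:3.10}(i).

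First I would fix $t>0$ and write
\begin{equation*}
\|\mathcal{W}^\mu_k(f)(\cdot,t)\|_{L_k^p(\mathbb{R}^n)} = \left\| \int_0^\infty e^{-ty}\,\mathcal{P}_k^{ty}(f)(\cdot)\,d\mu(y)\right\|_{L_k^p(\mathbb{R}^n)}.
\end{equation*}
Replacing the signed measure $\mu$ by its total variation $|\mu|$ and applying Minkowski's integral inequality gives
\begin{equation*}
\|\mathcal{W}^\mu_k(f)(\cdot,t)\|_{L_k^p(\mathbb{R}^n)} \le \int_0^\infty |e^{-ty}|\,\|\mathcal{P}_k^{ty}(f)\|_{L_k^p(\mathbb{R}^n)}\,d|\mu|(y).
\end{equation*}
Next, for $t,y>0$ one has $|e^{-ty}|\le 1$, and Theorem \ref{T:4.3}(i) yields the uniform estimate $\|\mathcal{P}_k^{ty}(f)\|_{L_k^p(\mathbb{R}^n)}\le c_k\|f\|_{L_k^p(\mathbb{R}^n)}$, independent of the time parameter $ty>0$. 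Substituting these two bounds and then invoking the finiteness of the total variation, $\int_0^\infty d|\mu|(y)=\|\mu\|<\infty$, I obtain
\begin{equation*}
\|\mathcal{W}^\mu_k(f)(\cdot,t)\|_{L_k^p(\mathbb{R}^n)} \le c_k\,\|\mu\|\,\|f\|_{L_k^p(\mathbb{R}^n)},
\end{equation*}
a bound uniform in $t>0$, which is the asserted boundedness.

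The only point requiring care --- the main, though minor, obstacle --- is the legitimacy of interchanging the norm and the integral against a signed measure. This is handled by passing to $|\mu|$ at the outset: the scalar integrand $y\mapsto \|e^{-ty}\mathcal{P}_k^{ty}(f)\|_{L_k^p(\mathbb{R}^n)}$ is dominated by the $|\mu|$-integrable constant $c_k\|f\|_{L_k^p(\mathbb{R}^n)}$, so the vector-valued integral converges absolutely and Minkowski's integral inequality applies. Note that the cancellation condition $\mu(\mathbb{R}^+)=0$ plays no role here; it is only the finiteness $\|\mu\|<\infty$ that is needed for boundedness, the vanishing-mean condition being reserved for the later inversion formula for the Flett potential.
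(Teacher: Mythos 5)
Your proof is correct and follows essentially the same route as the paper: Minkowski's integral inequality against $|\mu|$, the uniform bound $\|\mathcal{P}_k^{ty}(f)\|_{L_k^p(\mathbb{R}^n)}\le c_k\|f\|_{L_k^p(\mathbb{R}^n)}$ from Theorem \ref{T:4.3}(i), $|e^{-ty}|\le 1$, and finiteness of $\|\mu\|$. Your added remarks on justifying the interchange for a signed measure and on the irrelevance of the cancellation condition are accurate refinements of the same argument.
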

\begin{proof}
Let    $f\in L_k^p(\mathbb{R}^n)$. 
In connection with Minkowski's integral inequality and Theorem \ref{T:4.3}$(i)$, we obtain  
\begin{eqnarray*}
\|\mathcal{W}^\mu_{k}(f)\|_{L_k^p(\mathbb{R}^n)} &\le& \int_0^\infty e^{-ty}\,\|\mathcal{P}_k^{ty}(f)\|_{L_k^p(\mathbb{R}^n)}\, d|\mu|(t)\\
&\le& c_k\, \|f\|_{L_k^p(\mathbb{R}^n)}\, \int_0^\infty e^{-ty}\, d|\mu|(t)\\
&\le& c_k\, \|\mu\|\,\|f\|_{L_k^p(\mathbb{R}^n)}.
\end{eqnarray*}
\end{proof} 
Now, we have all the necessary tools to compute the inversion formula for $(k, 1)$-generalized Fourier Flett potentials.
 The following theorem gives the inversion formula for $(k, 1)$-generalized Fourier Flett potentials. The proof relies on techniques analogous to those used in  \cite[Theorem 3.5]{Negzaoui}.
 \begin{theorem} \label{inversion-Flett}
Let \,  $\mathcal{I}_k^\alpha, \,\, \alpha>0$   be the $(k, 1)$-generalized Fourier Flett potentials of $f$ in $L_k^p(\mathbb{R}^n)$ and $\mathcal{W}_\mu^{t}(f)$  be the wavelet- like transform. If  $\mu$ is a finite Borel measure on $[0,\infty)$ such that 
\begin{eqnarray*}
 \int_1^\infty t^\eta\, d|\mu|(t)&<&\infty\quad\text{for some}\quad \eta>\alpha, \quad \text{and}\\
 \int_0^\infty t^i\, d\mu(t) &=& 0,\,\, i=0,1,\cdots [\alpha]\, (\text{integer part of}\,\,\, \alpha).
\end{eqnarray*}
Then 
\begin{equation} \label{3.1}
    \int_0^\infty y^{-\alpha}\,\mathcal{W}_\mu^{t}(\mathcal{I}_k^\alpha f)(x,y)\, \frac{dy}{y} = C(\alpha,\mu)\,f(x)
\end{equation}
where $C(\theta, \mu)$ is given by  
\[ 
C(\theta, \mu) = \int_0^{\infty} \frac{\Tilde{\mu}(t)}{t^{1+\theta}}dt =  
\begin{cases} 
  \Gamma(-\theta)\int_0^{\infty}s^{\theta}d\mu(s) & \text{ if } \theta \neq 0,1,2,\cdots \\ 
  \frac{(-1)^{\theta+1}}{\theta !} \int_0^{\infty}s^{\theta} \ln{s}d\mu(s) & \text{ if } \theta =0,1,2,\cdots \\  
\end{cases} 
\] The function $\Tilde{\mu}(t)=\int_0^{\infty} e^{-ts}d\mu(s)$ is the Laplace transform of the Borel measure $\mu$.   Moreover, the equation \eqref{3.1} can be interpreted as 
\begin{equation} \label{3.2}
 \underset{\epsilon\rightarrow 0}{\lim}\int_\epsilon^\infty y^{-\alpha}\,\mathcal{W}_\mu^{t}(\mathcal{I}_k^\alpha (f))(x,y)\, \frac{dy}{y} = C(\alpha,\mu)\,f(x).
\end{equation}
The limit \eqref{3.2} exists in the sense of $L_k^p$ norm and point-wise almost every $x\in \mathbb{R}^n$.  If $f\in C_0(\mathbb{R}^n)$, the convergence is uniform.
\end{theorem}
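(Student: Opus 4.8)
The plan is to push everything through the $(k,1)$-generalized Fourier transform, where all three ingredients---the Poisson semigroup, the Flett potential, and the $\mu$-averaging defining the wavelet-like transform---act as explicit multipliers, and then to recognize the surviving multiplier as the constant $C(\alpha,\mu)$. First I would fix $y>0$ and compute $\mathcal{F}_k$ of $\mathcal{W}_\mu^{t}(\mathcal{I}_k^\alpha f)(\cdot,y)$ in the $x$-variable. Writing $\mathcal{P}_k^{ys}(\mathcal{I}_k^\alpha f)=P_k(ys,\cdot)\ast_k \mathcal{I}_k^\alpha f$ and using $\mathcal{F}_k(P_k(ys,\cdot))(\xi)=e^{-ys\|\xi\|^{1/2}}$ from Proposition \ref{p:4.1}, the multiplier relation $\mathcal{F}_k(\mathcal{I}_k^\alpha f)(\xi)=(1+\|\xi\|^{1/2})^{-\alpha}\mathcal{F}_k(f)(\xi)$, and Proposition \ref{t:2.3}$(iii)$, together with the extra damping factor $e^{-ys}$ built into Definition \ref{Wavelet-flett}, the $s$-integral collapses into the Laplace transform and I obtain
\begin{equation*}
\mathcal{F}_k\big(\mathcal{W}_\mu^{t}(\mathcal{I}_k^\alpha f)(\cdot,y)\big)(\xi)=(1+\|\xi\|^{1/2})^{-\alpha}\,\widetilde{\mu}\big(y(1+\|\xi\|^{1/2})\big)\,\mathcal{F}_k(f)(\xi).
\end{equation*}

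Next I would multiply by $y^{-\alpha-1}$ and integrate in $y$; the substitution $u=y(1+\|\xi\|^{1/2})$ pulls out exactly the factor $(1+\|\xi\|^{1/2})^{\alpha}$, which cancels the Flett multiplier and leaves
\begin{equation*}
\mathcal{F}_k\!\left(\int_0^\infty y^{-\alpha}\,\mathcal{W}_\mu^{t}(\mathcal{I}_k^\alpha f)(\cdot,y)\,\frac{dy}{y}\right)(\xi)=\left(\int_0^\infty u^{-\alpha-1}\widetilde{\mu}(u)\,du\right)\mathcal{F}_k(f)(\xi)=C(\alpha,\mu)\,\mathcal{F}_k(f)(\xi).
\end{equation*}
Applying the involutivity of $\mathcal{F}_k$ (Proposition \ref{prop-Fk}$(i)$) then yields \eqref{3.1}. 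To make the evaluation of $C(\alpha,\mu)$ rigorous I would record the two analytic facts the hypotheses are engineered to supply: the vanishing moments $\int_0^\infty s^i\,d\mu(s)=0$ for $i=0,\dots,[\alpha]$ force, via the Taylor expansion of $e^{-us}$, the estimate $\widetilde{\mu}(u)=O(u^{[\alpha]+1})$ as $u\to0^+$, so that $u^{-\alpha-1}\widetilde{\mu}(u)$ is integrable near $0$ (since $[\alpha]+1>\alpha$); while $|\widetilde{\mu}(u)|\le\|\mu\|$ together with $\int_1^\infty s^\eta\,d|\mu|(s)<\infty$ for some $\eta>\alpha$ controls the tail and legitimizes the Fubini interchange that produces the two cases of the formula for $C(\theta,\mu)$ (non-integer $\theta$ giving the $\Gamma(-\theta)$ factor, integer $\theta$ giving the logarithmic moment).

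The more delicate part is interpreting \eqref{3.1} as the truncated limit \eqref{3.2} in $L_k^p$ and almost everywhere. Here I would split $\int_\epsilon^\infty=\int_0^\infty-\int_0^\epsilon$ and show the tail piece $\int_0^\epsilon y^{-\alpha-1}\mathcal{W}_\mu^{t}(\mathcal{I}_k^\alpha f)(x,y)\,dy$ vanishes as $\epsilon\to0$. For the $L_k^p$ statement I would estimate its norm by Minkowski's integral inequality together with the boundedness of the wavelet-like transform (Proposition \ref{Wavelet-bounded}) and of $\mathcal{I}_k^\alpha$ (Theorem \ref{t:3.10}$(i)$), again using the small-$y$ decay $\widetilde{\mu}(y(1+\|\cdot\|^{1/2}))=O(y^{[\alpha]+1})$ so that $y^{-\alpha-1}$ is integrated against an $O(y^{[\alpha]+1})$ factor near the origin. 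For the pointwise and uniform (for $f\in C_0(\mathbb{R}^n)$) statements I would dominate the integrand by the maximal function through Theorem \ref{T:4.3}$(v)$ and Theorem \ref{t:4.9}, and then use the approximate-identity convergence $\mathcal{P}_k^{t}(f)\to f$ of Theorem \ref{T:4.3}$(iii)$.

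I expect the main obstacle to be precisely this last passage. The formal Fourier computation is essentially forced once the multipliers are identified, but justifying the interchange of the $y$-integral, the $d\mu$-integral, and Fourier inversion---and then upgrading the identity \eqref{3.1} to the truncated almost-everywhere and $L_k^p$ limit \eqref{3.2}---requires carefully combining the vanishing-moment decay of $\widetilde{\mu}$ near $0$ with maximal-function control near the origin of integration, which is where the quantitative hypotheses on $\mu$ are genuinely used.
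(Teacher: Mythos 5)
Your Fourier-side computation is correct as far as it goes, and it is genuinely different from the paper's route: the paper never passes to the Fourier side, but instead commutes $\mathcal{I}_k^\alpha$ with $\mathcal{P}_k^t$, inserts the representation $\mathcal{I}_k^\alpha=\frac{1}{\Gamma(\alpha)}\int_0^\infty s^{\alpha-1}e^{-s}\mathcal{P}_k^s\,ds$, and uses the Poisson semigroup law to rewrite $\mathcal{W}^\mu_{k}(\mathcal{I}_k^\alpha f)(x,y)$ as an average of $\mathcal{P}_k^u f(x)$ against the kernel $e^{-u}(u-ty)_+^{\alpha-1}$. The genuine gap in your proposal is in the treatment of the truncated limit \eqref{3.2}, which is the actual content of the theorem (the left side of \eqref{3.1} is \emph{defined} through it). You propose to split $\int_\epsilon^\infty=\int_0^\infty-\int_0^\epsilon$ and kill $\int_0^\epsilon$ by Minkowski, invoking a ``small-$y$ decay $\Tilde{\mu}\bigl(y(1+\|\cdot\|^{1/2})\bigr)=O(y^{[\alpha]+1})$''. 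This decay is not uniform in $\xi$: the moment conditions give $|\Tilde{\mu}(u)|\le C\,u^{\min(\eta,[\alpha]+1)}$ (not $u^{[\alpha]+1}$, since only $\eta$-moments with $\eta>\alpha$ are assumed finite), so the constant in your bound grows like $(1+\|\xi\|^{1/2})^{\min(\eta,[\alpha]+1)}$. The best bound on the full multiplier $(1+\|\xi\|^{1/2})^{-\alpha}\,\Tilde{\mu}\bigl(y(1+\|\xi\|^{1/2})\bigr)$ that is uniform in $\xi$ is of order $y^{\alpha}$ (attained near the transition region $\|\xi\|^{1/2}\approx 1/y$ whenever $\Tilde{\mu}$ is not identically zero there), and $\int_0^\epsilon y^{-\alpha-1}\cdot y^{\alpha}\,dy$ diverges. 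This is not an artifact of crude estimation: testing on functions whose $(k,1)$-transform is spread over many dyadic blocks and applying a uniform-boundedness argument shows there exist $f\in L_k^2(\mathbb{R}^n)$ with $\int_0^1 y^{-\alpha-1}\|\mathcal{W}^\mu_{k}(\mathcal{I}_k^\alpha f)(\cdot,y)\|_{L_k^2(\mathbb{R}^n)}\,dy=\infty$. The integral in \eqref{3.1} is only conditionally convergent near $y=0$, so no absolute-value/Minkowski estimate of that piece can close the argument. (A secondary obstruction: for $p>2$ the transform $\mathcal{F}_k(f)$ of a general $f\in L_k^p(\mathbb{R}^n)$ is not a function, so the multiplier identity itself requires a density argument backed by exactly the uniform estimates that are missing.)

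The missing idea is the one the paper uses: do not split off $\int_0^\epsilon$ at all, but Fubini-rearrange the truncated integral itself so that $\epsilon$ migrates into the Poisson parameter. After interchanging the $y$-, $d\mu$-, and $u$-integrations, rescaling $(u,y)\mapsto(\epsilon u,\epsilon y)$, and evaluating the inner Beta-type integral, the truncation becomes
\begin{equation*}
T_\epsilon f(x)=\int_0^\infty e^{-\epsilon u}\,\mathcal{P}_k^{\epsilon u}(f)(x)\,\sigma_{\alpha,\mu}(u)\,du,
\qquad
\sigma_{\alpha,\mu}(u)=\frac{1}{\Gamma(\alpha+1)\,u}\int_0^u (u-t)^{\alpha}\,d\mu(t),
\end{equation*}
where the hypotheses on $\mu$ are spent once and for all on showing $\sigma_{\alpha,\mu}\in L^1(0,\infty)$ with $\int_0^\infty\sigma_{\alpha,\mu}(u)\,du=C(\alpha,\mu)$; each $T_\epsilon$ is then an absolutely convergent average of Poisson integrals. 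The $L_k^p$ convergence (uniform for $f\in C_0(\mathbb{R}^n)$) follows from the approximate-identity property of Theorem \ref{T:4.3}$(iii)$ together with dominated convergence, and the a.e.\ convergence from the maximal bound $\sup_{t>0}|\mathcal{P}_k^t f|\le c\,\mathcal{M}_k f$ of Theorem \ref{T:4.3}$(v)$. In short: the cancellation in $\mu$ must be converted into integrability of $\sigma_{\alpha,\mu}$, not into a (false) uniform decay of the Fourier multiplier.
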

\begin{proof}
Let $f \in L_k^p(\mathbb{R}^n)$,\quad $1 \le p < \infty$. First, we  prove
\begin{equation*} \label{e:5.4}
\mathcal{I}_k^\alpha \circ \mathcal{P}_k^t =  \mathcal{P}_k^t \circ \mathcal{I}_k^\alpha .
\end{equation*}
By using \eqref{e:5 .1} and  Proposition \ref{t:2.3} (iii), we obtain  
\begin{eqnarray*}
\mathcal{F}_k\left( \mathcal{I}_k^\alpha(\mathcal{P}_k^t(f))\right)(\xi) &= &\mathcal{F}_k (\mathcal{P}_k^t(f))(\xi)\, \left(1+\|\xi\|^\frac{1}{2} \right)^{-\alpha} \\
&=& \mathcal{F}_k\left(P_k(t,\cdot)\ast f \right)(\xi)\,\left(1+\|\xi\|^\frac{1}{2} \right)^{-\alpha}\\ 
&=& \mathcal{F}_k\left(P_k(t,\cdot)\right (\xi)\,\left(1+\|\xi\|^\frac{1}{2} \right)^{-\alpha}\, \mathcal{F}_k(f)(\xi) \\
&=& \mathcal{F}_k\left( P_k(t,\cdot) \ast \mathcal{I}_k f\right)(\xi)\\
&=& \mathcal{F}_k(\mathcal{P}_k^t \circ \mathcal{I}_k^\alpha(f))(\xi)\\
\mathcal{I}_k^\alpha \circ \mathcal{P}_k^t &=&  \mathcal{P}_k^t \circ \mathcal{I}_k^\alpha .
\end{eqnarray*}
Now, we consider
\begin{eqnarray*}
\mathcal{W}_\mu^{t}(\mathcal{I}_k^\alpha (f))(x,y) &=& \int_0^\infty e^{-ty} \, \mathcal{P}_k^{ty} \,\mathcal{I}_k^\alpha (f)(t)\, d\mu(t)\\
&=& \int_0^\infty e^{-ty}\,\mathcal{I}_k^\alpha(\mathcal{P}_k^{ty}f)(t)\,d\mu(t)\\
&=&  \frac{1}{\Gamma(\alpha)}\int_0^\infty \int_0^\infty e^{-(ty+s)} \, s^{\alpha-1}\, \mathcal{P}_k^s(\mathcal{P}_k^{ty}(f))(t)\,ds\,d\mu(t)\\
&=& \frac{1}{\Gamma(\alpha)}\int_0^\infty \int_0^\infty e^{-u}\,(u-ty)_+^{\alpha-1}\, \mathcal{P}_k^u(f)(t)\,du\,d\mu(t).
\end{eqnarray*}
For $\epsilon>0$, let us define truncated integral
\begin{eqnarray*}
T_\epsilon f(x) &=& \int_\epsilon^\infty y^{-\alpha-1}\,\mathcal{W}_\mu^{t}(\mathcal{I}_k^\alpha (f))(x,y) \,dy\\
&=& \frac{1}{\Gamma(\alpha)}\int_\epsilon^\infty \int_0^\infty \int_0^\infty y^{-\alpha-1}\,e^{-u}\,(u-ty)_+^{\alpha-1}\, \mathcal{P}_k^u(f)(t)\,du\,d\mu(t)\,dy.
\end{eqnarray*}
By using Fubini's theorem, we obtain
\begin{eqnarray*}
T_\epsilon f(x)&=&\frac{1}{\Gamma(\alpha)}  \int_0^\infty  e^{-u}\,\mathcal{P}_k^u(f)(t) \left( \int_0^\infty \int_\epsilon^\infty  y^{-\alpha-1}\, (u-ty)_+^{\alpha-1}\, dy\, d\mu(t)\right) du\\
&=&\frac{1}{\Gamma(\alpha)}  \int_0^\infty  e^{-u}\,\mathcal{P}_k^u(f)(t) \left( \int_0^{\frac{u}{\epsilon}} \int_\epsilon^{\frac{u}{\mu}} y^{-\alpha-1}\, (u-ty)^{\alpha-1}\, dy\, d\mu(t)\right) du.
\end{eqnarray*}
Replacing $u,y$ with $\epsilon u,\epsilon y$ respectively,
\begin{eqnarray*}
&=&\frac{1}{\Gamma(\alpha)}  \int_0^\infty  e^{- \epsilon u}\,\mathcal{P}_k^{\epsilon u}(f)(t) \left( \int_0^{u}\int_\epsilon^{\frac{u}{t}} y^{-\alpha-1}\, (u-ty)^{\alpha-1}\, dy\, d\mu(t)\right) du.    
\end{eqnarray*}

In view of \cite{Eryigit}, we obtain that 
\begin{eqnarray*}
\int_\epsilon^{\frac{u}{t}} y^{-\alpha-1}\, (u-ty)^{\alpha-1}\, dy = \frac{\Gamma \alpha}{\Gamma (\alpha+1)} \frac{(u-t)^{\alpha}}{u}.
\end{eqnarray*}
We define 
\begin{equation*}
\sigma_{\alpha,\mu}(u) := \frac{1}{\Gamma (\alpha+1)\,u}  \int_0^\infty (u-t)^\alpha \,d\mu(t)
\end{equation*}
and $C({\alpha,\mu}):= \int_0^\infty \sigma_{\alpha,\mu}(u)\, du$.
Thus,
$T_\epsilon f(x) = \int_0^\infty e^{-\epsilon u} \,\mathcal{P}_k^{\epsilon u} (f)(x)\, \sigma_{\mu, \alpha}(u)\,du.$
Consider,
\begin{eqnarray*}
T_\epsilon f(x) - C(\alpha,\mu) f(x) &=& \int_0^\infty e^{-\epsilon u} \left( \mathcal{P}^{\epsilon u} (f)(x) - f(x)\right)\sigma_{\alpha,\mu}(u)\,du \\&&+f(x) \int_0^\infty  \left(e^{-\epsilon u}-1 \right) \sigma_{\alpha,\mu}(u)\,du.
\end{eqnarray*}
Invoking the Minkowski inequality, we get
\begin{eqnarray*}
    \| T_\epsilon f - C(\alpha,\mu) f\|_{L_k^p(\mathbb{R}^n)} &\le &\int_0^\infty e^{-\epsilon u} \|  \mathcal{P}^{\epsilon u}(f) - f\|_{L_k^p(\mathbb{R}^n)}|\sigma_{\alpha,\mu}(u)|\,du\\&&
    + \|f\|_{L_k^p(\mathbb{R}^n)} \int_0^\infty \left(e^{-\epsilon u}-1 \right) |\sigma_{\alpha,\mu}(u)|\,du.
\end{eqnarray*}
By using the relations  $(i)$ and $(v)$ in Theorem  \ref{T:4.3} and the dominated convergence theorem, we deduce that
\begin{eqnarray*} \label{e:5.5}
\lim_{\epsilon \rightarrow 0}  \| T_\epsilon f - C({\alpha,\mu}) f\|_{L_k^p(\mathbb{R}^n)} = 0,
\end{eqnarray*}
it holds true for every $f \in \mathcal{C}_0(\mathbb{R}^n)$ and the convergence is uniform.\\
The point-wise convergence can be proved similarly as in Theorem \ref{t:4.9} by applying  Theorem \ref{T:4.3} $(v)$. In fact,  
\begin{eqnarray*}
\nonumber \mathop{\underset{\epsilon>0}{\text{sup}}}| T_\epsilon f(x)| &=&   \mathop{\underset{\epsilon>0}{\text{sup}}}\left|  \int_0^\infty e^{-\epsilon u} \,\mathcal{P}_k^{\epsilon u} (f)(x)\, \sigma_{\alpha,\mu}(u)\,du \right| \\
\nonumber &\le& \mathop{\underset{t>0}{\text{sup}}}| \mathcal{P}_k^t(f)(x)|\, \int_0^\infty |\sigma_{\alpha,\mu}(u)|\,du\\
 &\le& C f(x). \label{e:5.6}
\end{eqnarray*}
This completes the proof.
\end{proof}
\section{ The Riesz potential associated with \texorpdfstring{$\beta$}{beta} semigroup} \label{S:4}
The Riesz potential has extensive applications in mathematical physics and geometry. Due to the smoothing effects and ability to describe long-range interactions, the Riesz potential is a key tool in fields like fluid dynamics and electrostatics. \\
In the classical setting,  Sezer and Aliev proposed a characterization of the Riesz potential spaces with the aid of composite wavelet transform \cite{Sezer}.
In this section, we concentrate mainly on the representations of the Riesz potential and the inverse formula for the Riesz potential. For each $\beta >0,$ we introduce a semigroup 
$\{\mathcal{B}_k^{(\beta ,t)}\}_{t>0}$  called $\beta$-semigroup to represent the Riesz potential using the radial kernel  $W_k^{(\beta ,t)}$. The construction of the family $W_k^{(\beta, t)}$ depends on the representations of Riesz potential in terms of heat kernel and Poisson kernel. The section is divided into three parts, the first part deals with some fundamentals of the Riesz potential, the second part consists of a list of properties of the $\beta$-semigroup, and at the end, we deduce an inversion formula for Riesz potential with the help of wavelet-like transform.

\subsection{ The Riesz potential}
 Ivanov introduced the Riesz potential associated with $(k,1)$-generalized Fourier transform \cite{Ivanov1}. 
The Riesz potential for $f\in \mathcal{S}(\mathbb{R})^n$ reads
\begin{eqnarray}\label{Riesz-1}
    \mathbf{R}_k^{\alpha}(f)(x)= \frac{\Gamma(n+2\gamma -1-\alpha)}{\Gamma(\alpha)} \int_{\mathbb{R}^n} \tau_xf(\xi)\|\xi\|^{\alpha-(n+2\gamma-1)}v_k(\xi)d\xi.
\end{eqnarray} 
The Riesz potential can also be written in terms of the heat semi-groups, formulated in \cite{Ivanov1}. We rewrite the integral as 
\begin{align} \label{Riesz-heat}
    \mathbf{R}_k^{\alpha}(f)(x) = \frac{1}{c_k\Gamma(\alpha)} \int_0^{\infty} t^{\alpha-1}H_k^t(f)(x)dt,
\end{align} 

 Apart from the integral representations \eqref{Riesz-1} and \eqref{Riesz-heat}, we define the Riesz potential with the aid of the Poisson kernel as outlined in the following lemma.
\begin{lemma} \label{Riesz-poisson}
Let $f \in \mathcal{S}(\mathbb{R}^n)$ and $0<\alpha< n+2\gamma-1$. Then 
\begin{align*}
    \mathbf{R}_k^{\alpha}(f)(x)= \frac{1}{c_k \Gamma(2\alpha)} \int_{0}^{\infty} t^{2\alpha-1} \mathcal{P}_k^t(f)(x)dt,
\end{align*}
where $\mathcal{P}_k^tf$ is the Poisson semigroup. 

\end{lemma}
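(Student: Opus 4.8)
The plan is to verify the asserted identity on the generalized Fourier side and then transfer it back by injectivity of $\mathcal{F}_k$. Both the established heat representation \eqref{Riesz-heat} and the candidate Poisson representation are Fourier-multiplier operators, so it suffices to check that they carry the same symbol; the hypothesis $f\in\mathcal{S}(\mathbb{R}^n)$ makes every interchange below permissible once absolute convergence is checked.

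First I would compute the symbol of the right-hand side. Writing $\mathcal{P}_k^t(f)=P_k(t,\cdot)\ast_k f$ and combining the convolution theorem (Proposition \ref{t:2.3}(iii)) with Proposition \ref{p:4.1}(i) gives $\mathcal{F}_k(\mathcal{P}_k^t(f))(\xi)=e^{-t\|\xi\|^{1/2}}\mathcal{F}_k(f)(\xi)$. Applying $\mathcal{F}_k$ to the right-hand integral and interchanging it with the $t$-integral yields the symbol
\[
\frac{1}{c_k\Gamma(2\alpha)}\left(\int_0^\infty t^{2\alpha-1}e^{-t\|\xi\|^{1/2}}\,dt\right)\mathcal{F}_k(f)(\xi).
\]
The substitution $t\mapsto t\|\xi\|^{1/2}$ evaluates the inner integral to $\Gamma(2\alpha)\|\xi\|^{-\alpha}$, so the symbol collapses to $\tfrac{1}{c_k}\|\xi\|^{-\alpha}\mathcal{F}_k(f)(\xi)$. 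I would then run the identical computation on \eqref{Riesz-heat}: since $\mathcal{F}_k(H_k^t(f))(\xi)=e^{-t\|\xi\|}\mathcal{F}_k(f)(\xi)$ by Proposition \ref{p:3.1}(ii), the symbol of $\mathbf{R}_k^\alpha$ equals $\tfrac{1}{c_k\Gamma(\alpha)}\left(\int_0^\infty t^{\alpha-1}e^{-t\|\xi\|}\,dt\right)\mathcal{F}_k(f)(\xi)=\tfrac{1}{c_k}\|\xi\|^{-\alpha}\mathcal{F}_k(f)(\xi)$. The two symbols coincide, and as $\mathcal{F}_k$ is involutory on $L_k^2(\mathbb{R}^n)$ (Proposition \ref{prop-Fk}(i)), the two operators agree.

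The main obstacle is rigor in the interchanges and at the origin, which is exactly where the range $0<\alpha<n+2\gamma-1$ enters. The Fubini step needs $\int_0^\infty t^{2\alpha-1}e^{-t\|\xi\|^{1/2}}\,dt\,|\mathcal{F}_k(f)(\xi)|$ to be absolutely convergent and integrable against $v_k(\xi)\,d\xi$: the constraint $\alpha>0$ guarantees convergence of the gamma integrals at $t=0$, the rapid decay of $\mathcal{F}_k(f)$ (valid since $f\in\mathcal{S}(\mathbb{R}^n)$) controls behaviour as $\|\xi\|\to\infty$, while the singular factor $\|\xi\|^{-\alpha}$ produced after integration is locally integrable against the weighted measure near the origin precisely when $\alpha<n+2\gamma-1$. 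Once this integrability is secured, pulling $\mathcal{F}_k$ through the $t$-integral and recovering the operators by inversion are routine.

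As an alternative I would note a purely integral-theoretic route: substitute the subordination formula $\mathcal{P}_k^t(f)=\tfrac{t}{2\sqrt{\pi}}\int_0^\infty u^{-3/2}e^{-t^2/(4u)}H_k^u(f)\,du$ into the right-hand side, apply Fubini, evaluate the Gaussian moment $\int_0^\infty t^{2\alpha}e^{-t^2/(4u)}\,dt=2^{2\alpha}\Gamma(\alpha+\tfrac12)u^{\alpha+1/2}$, and simplify the resulting constant using the Legendre duplication formula $\Gamma(2\alpha)=\tfrac{2^{2\alpha-1}}{\sqrt{\pi}}\Gamma(\alpha)\Gamma(\alpha+\tfrac12)$ to recover exactly $\tfrac{1}{c_k\Gamma(\alpha)}\int_0^\infty u^{\alpha-1}H_k^u(f)\,du$, i.e. \eqref{Riesz-heat}. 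This avoids the injectivity argument but shifts the burden onto justifying the subordination identity in the $(k,1)$-setting, so I would present the multiplier argument as the primary proof.
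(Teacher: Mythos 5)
Your core idea is sound, and it is a genuinely different route from the paper's. The paper never touches the Fourier side: it works entirely with the explicit kernel of Definition \ref{D:4.1}, writing $\mathcal{P}_k^t(f)(x)$ as an integral of $\tau_x f(\xi)$ against $C_{n,k}\,t/(t^2+4\|\xi\|)^{2\gamma+n-\frac12}$, interchanging the $t$- and $\xi$-integrations, evaluating the inner $t$-integral by the substitution $u=t/(2\|\xi\|^{1/2})$ and the Beta integral $\int_0^\infty u^{2\alpha}(1+u^2)^{-(2\gamma+n-\frac12)}\,du$, and then matching the result against the kernel representation \eqref{Riesz-1} of $\mathbf{R}_k^\alpha$ via the Legendre duplication formula. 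Your multiplier argument instead takes the heat representation \eqref{Riesz-heat} as the starting point and compares symbols; it is shorter, makes the reason for the identity transparent (both sides are the multiplier $\|\xi\|^{-\alpha}$), and hides the special-function bookkeeping inside two Gamma integrals, whereas the paper's computation is self-contained at the kernel level and does not need to transform anything. Your subordination alternative is also correct (the constants check out, and it is in fact closer in spirit to the paper's proof, since the duplication formula reappears there).

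One step of your primary argument does need repair. You conclude by invoking involutivity of $\mathcal{F}_k$ on $L_k^2(\mathbb{R}^n)$, but the symbol $\|\xi\|^{-\alpha}\mathcal{F}_k(f)(\xi)$ is square-integrable against $v_k(\xi)\,d\xi$ near the origin only when $2\alpha<n+2\gamma-1$; for $\alpha\in\left[\tfrac{n+2\gamma-1}{2},\,n+2\gamma-1\right)$ the functions $\mathbf{R}_k^\alpha f$ and the Poisson-side integral need not lie in $L_k^2(\mathbb{R}^n)$ (nor in $L_k^1(\mathbb{R}^n)$), so neither the act of applying $\mathcal{F}_k$ to them nor the injectivity conclusion is licensed as stated; your integrability check with $\alpha<n+2\gamma-1$ secures only the $L^1$-type bound needed for Fubini, not $L^2$ membership. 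The fix requires no new idea: do not transform the outputs at all. Insert the representation \eqref{eq:2.3} into $\mathcal{P}_k^t(f)(x)$ (giving $c_k\int_{\mathbb{R}^n}e^{-t\|\xi\|^{1/2}}\mathcal{F}_k(f)(\xi)B_k(x,\xi)v_k(\xi)\,d\xi$) and likewise into $H_k^t(f)(x)$, then apply Fubini in $(t,\xi)$ — justified by exactly the absolute-convergence estimate you give, since $|B_k(x,\xi)|\le 1$. Both sides then equal the same absolutely convergent pointwise integral $\int_{\mathbb{R}^n}\|\xi\|^{-\alpha}\mathcal{F}_k(f)(\xi)B_k(x,\xi)v_k(\xi)\,d\xi$ (with matching constants), and no injectivity argument is needed anywhere in the range $0<\alpha<n+2\gamma-1$.
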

\begin{proof}
   We begin the proof by recalling the Poisson semigroup as follows
\begin{eqnarray}
    \int_0^{\infty} t^{2\alpha-1}\mathcal{P}_k^t(f)(x)dt &=& c_k C_{n,k} \int_0^{\infty} t^{2\alpha-1} \int_{\mathbb{R}^n} \frac{t}{(t^2+4\|\xi\|)^{2\gamma+n-1/2}} \tau_xf(\xi)v_k(\xi)d\xi dt \notag \\
    &=& c_k C_{n,k} \int_{\mathbb{R}^n} \tau_xf(\xi) \int_0^{\infty} \frac{t^{2\alpha}}{(t^2+4\|\xi\|)^{2\gamma+n-1/2}} dt v_k(\xi)d\xi \label{Riesz-Poisson-1}.
\end{eqnarray}
The substitution $u=\frac{t}{2\|\xi\|^{1/2}}$ will simplify the inner integral as
\begin{eqnarray}
    \int_0^{\infty} \frac{t^{2\alpha}}{(t^2+4\|\xi\|)^{2\gamma+n-1/2}}dt=(4\|\xi\|)^{\alpha-(2\gamma+n-1)}\int_0^{\infty} \frac{u^{2\alpha}}{(1+u^2)^{2\gamma+n-1/2}}du.\label{Riesz-Poisson-2}
\end{eqnarray}
The integral in the right-hand side of \eqref{Riesz-Poisson-2} boils down to the Beta function by the substitution $u=\tan(\theta),$ 
\begin{align}
    \int_0^{\infty} \frac{u^{2\alpha}}{(1+u^2)^{2\gamma+n-1/2}}du &= \frac{1}{2}\frac{\Gamma(2\gamma+n-\alpha-1) \Gamma(\alpha+1/2)}{\Gamma(2\gamma+n-1/2)}.\label{Riesz-Poisson-3} 
\end{align} 
Thus, the identity follows from \eqref{Riesz-Poisson-1}, \eqref{Riesz-Poisson-2}, \eqref{Riesz-Poisson-3}, and basic properties of Gamma function.
\end{proof}
The boundedness of the Riesz potential 
 for $0<\alpha<(n+2\gamma-1)$ and $\alpha= (n+2\gamma)(\frac{1}{p}-\frac{1}{q}),$ with $1<p<q<\infty$ can also be viewed in the work of Ivanov.

\subsection{$\beta$-semigroup and its properties}
Following the classical theory of Riesz potentials \cite{Aliev-2008}, we define a generalized semigroup generated by the kernel $W_k^{(\beta,t)}$. For each $\beta >0,$ the function $W_k^{(\beta,t)}$ is
\begin{align*}
    W_k^{(\beta,t)}(\xi)= c_k\int_{\mathbb{R}^n} e^{-t\|y\|^{\beta/2}} B_k(\xi,y)v_k(y)dy.
\end{align*}
Then the $\beta$-semigroup generated by the kernel $W_k^{\beta,t}(\xi)$ is defined as
 \begin{align}\label{b-semigroup}
     \mathcal{B}_k^{(\beta,t)}(f)(x) = c_k\int_{\mathbb{R}^n}\tau_x W_k^{(\beta, t)} (\xi)f(\xi)v_k(\xi)d\xi.
 \end{align}
For $\beta=1,$ the $1$-semigroup  $ \mathcal{B}_k^{(1,t)}$ 
  corresponds to the Poisson semigroup, and for $\beta=2,$ the $2$-semigroup $ \mathcal{B}_k^{(2,t)}$ is the heat semigroup. For other values of $\beta,$ the explicit expansion is not straightforward. We can interpret 
 $\mathcal{B}_k^{(\beta,t)}$ as a generalization of these semigroups; thus, it shares similar properties with heat and Poisson semigroups. In the forthcoming proposition, we detail the properties of the kernel $W_k^{(\beta, t)}$ and semi-group $ \mathcal{B}_k^{(\beta,t)},$ with proof techniques extending those from the classical setting and Dunkl setting \cite{Aliev-2008, Rejeb, Verma-25}. 

\begin{proposition}
      Suppose $ 0<\beta <\infty, \, t>0$ and $\xi \in \mathbb{R}^n$. Then 
      \begin{enumerate}[$(i)$]
          \item For any $\lambda >0$, 
          \begin{align*}
              W_k^{(\beta,  \lambda t)}(\lambda^{2/\beta}\xi) = \lambda^{-2(n+2\gamma-1)/\beta}W_k^{(\beta, t)}(\xi).  
          \end{align*} 
          \item  For $0<\beta \le 2, \quad W_k^{(\beta, t)}(\xi)$ is positive.\\
          \item For  any $\beta$ of the form $\beta=4m$, where $m\in \mathbb{N},$ $W_k^{(\beta, t)}(\xi)$ is rapidly decreasing as $\|\xi\|\rightarrow \infty.$ Moreover, for any $\beta>0$ and $t>0,$
          \begin{align*}
              \lim_{\|\xi\| \rightarrow \infty} \|\xi\|^{n+2\gamma-1+\beta/2} \left|W_k^{(\beta, t)}(\xi)\right|= \frac{2^{n+2\gamma-2+\beta}\beta}{\pi} \sin \left(\frac{\beta\pi}{2}\right) \Gamma \left( n+2\gamma-2+\frac{\beta+1}{2}\right) \Gamma\left(\frac{\beta}{2}\right).
          \end{align*}
          \item For $0<\beta <\infty$ and $t>0,$ 
          \begin{align*}
               \int_{\mathbb{R}^n} W_k^{(\beta, t)}(\xi)v_k(\xi)d\xi =1.
          \end{align*}
          \item For $1\le p \le \infty, \, f \in L_k^p(\mathbb{R}^n)$ and for all $t>0,$ we have
          \begin{align*}
              \| \mathcal{B}_k^{(\beta,t)}(f) \|_{L_k^p(\mathbb{R}^n)} &\le C_{\beta} \|f\|_{L_k^p(\mathbb{R}^n)},\\
          \text{where } C_{\beta}= c_k \int_{\mathbb{R}^n} |W_k^{(\beta,1)}(\xi)|v_k(\xi)d\xi <\infty. & \text{ If } 0< \beta \le 2, \text{ then } C_{\beta}=c_k.
          \end{align*}
          \item Let $ f \in L_k^p(\mathbb{R}^n)$  for $p \in [1,\infty)$ and $\beta >0$. Then  
          \begin{align*}
              \sup_{t>0}|\mathcal{B}_k^{(\beta,t)}(f)(x)| \le c\mathcal{M}_kf(x),
          \end{align*}
          where $\mathcal{M}_kf$ is the Maximal function associated with $(k,1)$-generalized Fourier transform.\\ 
          \item  For $ f \in L_k^p(\mathbb{R}^n),\, 1\le p < \infty, $ 
          \begin{align*}
              \sup_{x\in \mathbb{R}^n}|\mathcal{B}_k^{(\beta,t)}(f)(x)|\le \Tilde{c}t^{-\frac{2}{p\beta}(n+2\gamma-1)}\|f\|_{L_k^p(\mathbb{R}^n)}, 
          \end{align*} where $\Tilde{c}=c_k \left(W_k^{(\beta,1)}(0)\right)^{1/p} \|W_k^{(\beta,1)}\|_{L_k^1(\mathbb{R}^n)}^{1/q}$
          \item For a fixed $\beta,$ the family $\mathcal{B}_k^{(\beta,t)}\circ\mathcal{B}_k^{(\beta,\tau)}=\mathcal{B}_k^{(\beta,t+\tau)}$. for all $t, \tau >0.$\\
          \item For $f \in L_k^p(\mathbb{R}^n),$ where $1\le p \le \infty$
          \begin{align*}
              \lim_{t\rightarrow 0} \mathcal{B}_k^{(\beta,t)}(f)(x)= f(x) . 
          \end{align*}
      \end{enumerate}
\end{proposition}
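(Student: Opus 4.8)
The backbone of the whole proposition is that $W_k^{(\beta,t)}=\mathcal{F}_k(e^{-t\|\cdot\|^{\beta/2}})$, so by the involution property (Proposition \ref{prop-Fk}(i)) one has $\mathcal{F}_k(W_k^{(\beta,t)})(\xi)=e^{-t\|\xi\|^{\beta/2}}$ and $\mathcal{B}_k^{(\beta,t)}=f\ast_kW_k^{(\beta,t)}$ acts as the Fourier multiplier $e^{-t\|\cdot\|^{\beta/2}}$. I would establish (i) first, directly from the defining integral: substituting $y\mapsto\lambda^{-2/\beta}z$ and combining the homogeneity of $v_k$ (degree $2\gamma-1$) with the scaling identity $B_k(\lambda^{2/\beta}\xi,\lambda^{-2/\beta}z)=B_k(\xi,z)$ — itself a consequence of the eigenvalue relation $\|x\|\Delta_k^xB_k(x,y)=-\|y\|B_k(x,y)$ together with $B_k(0,\cdot)=1$ — while the exponent collapses since $\lambda t\|\lambda^{-2/\beta}z\|^{\beta/2}=t\|z\|^{\beta/2}$. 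Two consequences I would record at once are the scale invariance $\|W_k^{(\beta,t)}\|_{L^1_k}=\|W_k^{(\beta,1)}\|_{L^1_k}$ and the dilation identity $W_k^{(\beta,t)}=(W_k^{(\beta,1)})_\epsilon$ with $\epsilon=t^{2/\beta}$; both feed (v), (vi) and (vii).

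For (ii) I would use subordination: for $0<\beta\le2$ the map $r\mapsto e^{-tr^{\beta/2}}$ is completely monotone, hence $e^{-t\|y\|^{\beta/2}}=\int_0^\infty e^{-s\|y\|}\,d\nu_{t,\beta}(s)$ with $\nu_{t,\beta}\ge0$; applying $\mathcal{F}_k$ termwise yields $W_k^{(\beta,t)}=\int_0^\infty F_k(\cdot,s)\,d\nu_{t,\beta}(s)\ge0$, because $\mathcal{F}_k(e^{-s\|\cdot\|})=F_k(\cdot,s)\ge0$ by Proposition \ref{p:3.1}(ii) and involution. The same representation, with $\int F_k(\cdot,s)\,v_k=1$ and $\nu_{t,\beta}(\mathbb{R}^+)=1$, gives (iv) for $0<\beta\le2$; for general $\beta$ I would instead evaluate $\mathcal{F}_k(W_k^{(\beta,t)})$ at $\xi=0$ using $B_k(0,\cdot)=1$ and involution, which returns the value $e^{-t\cdot0}$. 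The semigroup law (viii) is immediate from the multiplier picture: by Proposition \ref{t:2.3}(iii), $\mathcal{F}_k(\mathcal{B}_k^{(\beta,t)}\mathcal{B}_k^{(\beta,\tau)}f)=e^{-(t+\tau)\|\cdot\|^{\beta/2}}\mathcal{F}_k(f)$, and inverting identifies this with $\mathcal{B}_k^{(\beta,t+\tau)}f$. Finally (ix) is an approximate-identity statement: on the Fourier side $(e^{-t\|\cdot\|^{\beta/2}}-1)\mathcal{F}_k(f)\to0$ settles $f\in\mathcal{S}(\mathbb{R}^n)$ uniformly by dominated convergence, and the $L^p$ case follows from the mass normalization (iv), the uniform bound (v), and the concentration of $(W_k^{(\beta,1)})_\epsilon$ as $\epsilon\to0$, by the density argument already used for $\mathcal{P}_k^t$ in Theorem \ref{T:4.3}(iii).

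The three quantitative bounds reduce to convolution estimates. For (v), since $W_k^{(\beta,t)}$ is radial, Proposition \ref{t:2.3}(i) gives $\|\mathcal{B}_k^{(\beta,t)}f\|_{L^p_k}\le c_k\|f\|_{L^p_k}\|W_k^{(\beta,t)}\|_{L^1_k}$, and the scale invariance from (i) identifies the constant as $C_\beta=c_k\|W_k^{(\beta,1)}\|_{L^1_k}$, which reduces to $c_k$ when $0<\beta\le2$ because then $W$ is nonnegative and (iv) applies. For (vi) I would write $\mathcal{B}_k^{(\beta,t)}f=f\ast_k(W_k^{(\beta,1)})_\epsilon$ and invoke Theorem \ref{t:4.9} with $g=W_k^{(\beta,1)}$; the required majorant $|g(y)|\le c(1+4\|y\|)^{-(n+2\gamma-1/2)}$ is supplied by the decay rate of part (iii), while $g\in A_k(\mathbb{R}^n)$ since $W_k^{(\beta,1)}=\mathcal{F}_k(e^{-\|\cdot\|^{\beta/2}})$ with $e^{-\|\cdot\|^{\beta/2}}\in L^1_k$. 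For (vii) I would apply H\"older to $c_k\int|f|\,|\tau_xW_k^{(\beta,t)}|\,v_k$ with the splitting $|\tau_xW|=|\tau_xW|^{1/p}|\tau_xW|^{1/q}$, bounding the first factor via $\|\tau_xW_k^{(\beta,t)}\|_\infty\le W_k^{(\beta,t)}(0)$ and the second via $\int|\tau_xW_k^{(\beta,t)}|\,v_k=\|W_k^{(\beta,t)}\|_{L^1_k}$ (Proposition \ref{p:2.1}(ii)), and then reading off the $t$-power from $W_k^{(\beta,t)}(0)=t^{-2(n+2\gamma-1)/\beta}W_k^{(\beta,1)}(0)$ given by (i).

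The genuine difficulty is the pointwise asymptotics in (iii). Because $e^{-t\|\cdot\|^{\beta/2}}$ is radial, Proposition \ref{prop-Fk}(iii) represents it as a Hankel transform, $W_k^{(\beta,t)}(\xi)=\mathcal{H}_{n+2\gamma-2}(e^{-tu^{\beta/2}})(\|\xi\|)$, and by (i) it suffices to treat $t=1$. When $\beta=4m$ the amplitude $e^{-u^{2m}}$ is smooth and rapidly decreasing, so $W_k^{(\beta,1)}\in\mathcal{F}_k(\mathcal{S})$ is rapidly decreasing, which gives the first assertion. The sharp power law $\|\xi\|^{-(n+2\gamma-1+\beta/2)}$ with a constant carrying the factor $\sin(\beta\pi/2)$ is where the work concentrates: the large-argument decay of a Hankel transform is governed by the behaviour of the amplitude near $u=0$, and the fractional power $u^{\beta/2}$ is precisely the non-smooth term that controls it. I would extract the asymptotics through the Mellin transform of the Hankel integral, continue it analytically, and collect the residue produced by $u^{\beta/2}$; the reflection formula for $\Gamma$ then manufactures the $\sin(\beta\pi/2)$ factor together with the stated product of Gamma values. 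This Mellin/analytic-continuation computation is the main obstacle, and every tail estimate that relies on it — notably (vi) and (vii) — is contingent on its outcome.
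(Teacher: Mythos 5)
You follow essentially the same route as the paper in every part: scaling in the defining integral for (i), Bernstein subordination reducing to positivity of the heat kernel for (ii), evaluation of the transform at the origin via involution for (iv), the convolution inequality of Proposition \ref{t:2.3} plus scale invariance for (v), Theorem \ref{t:4.9} applied to the dilated family for (vi), the same H\"older splitting with exponents $1/p$, $1/q$ for (vii), the Fourier multiplier identity for (viii), and the approximate-identity argument modelled on Theorem \ref{T:4.3}(iii) for (ix). The one divergence is (iii): for $\beta=4m$ both you and the paper use the Schwartz-class argument, but for the sharp power-law asymptotics the paper reduces the radial transform to a Hankel transform and simply cites the classical computation in \cite{Aliev-2008}, whereas you sketch a Mellin-transform/analytic-continuation derivation and leave it as an acknowledged obstacle; the paper's citation closes exactly the gap you flag, and your explicit observation that (v)--(vii) for general $\beta$ hinge on the integrability and decay coming from (iii) is correct and is left implicit in the paper. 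One caveat on your (vi): the majorant demanded by Theorem \ref{t:4.9} has exponent $n+2\gamma-\frac{1}{2}$, while (iii) only yields decay of order $n+2\gamma-1+\frac{\beta}{2}$, so your claim that (iii) supplies the hypothesis fails for $0<\beta<1$; the paper glosses over the same point, and the clean repair is to use the subordination formula from (ii) to dominate $\sup_{t>0}|\mathcal{B}_k^{(\beta,t)}(f)|$ by the heat maximal function when $0<\beta\le 2$, reserving the decay argument for $\beta\ge 1$.
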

\begin{proof}
    \begin{enumerate}[$(i)$]
        \item   In accordance with the definition of $W_k^{(\beta,t)}(y)$ we have 
        \begin{align*}
            W_k^{(\beta, \lambda t)}(\lambda^{2/\beta}y) &= \mathcal{F}_k(e^{-\lambda t\|\cdot\|^{\beta/2}})(\lambda^{2/\beta}y)\\
            & = c_k \int_{\mathbb{R}^n} e^{-\lambda t\|\xi\|^{\beta/2}}B_k(\lambda^{2/\beta}y, \xi)v_k(\xi) d\xi\\
            & = \lambda^{-2(n+2\gamma-1)/\beta}
            c_k \int_{\mathbb{R}^n} e^{-t\|\xi\|^{\beta/2}} B_k(y, \xi)v_k(\xi )d\xi\\
            & = \lambda^{-2(n+2\gamma-1)/\beta}  W_k^{(\beta, t)}(y).   
        \end{align*}
        \item For $\beta=1$ and $\beta=2,$ the positivity is evident since both the Poisson kernel and heat kernel are positive. Consider the case where $0<\beta<2 $. According to Bernstein's theorem \cite{Feller}, there exists a non-negative finite measure $\eta_{\beta}$ on $[0,\infty)$ such that, 
        \begin{align}
            e^{-w^{\beta/2}}= \int_{0}^{\infty} e^{-uw}d\eta_{\beta}(u) \text{ \quad for $w \ge 0.$} \label{Property-2}
        \end{align} Substituting $w=\|t^{2/\beta}\xi\|$ in \eqref{Property-2}  $W_k^{(\beta, t)}(y)$ becomes 
        \begin{align*}
            W_k^{(\beta, t)}(y) = & c_k \int_{\mathbb{R}^n}\int_0^{\infty} e^{-ut^{2/\beta}\|\xi\|} B_k(y,\xi) d\eta_{\beta}(u)v_k(\xi) d\xi\\
            = & \int_0^{\infty} \left( c_k \int_{\mathbb{R}^n} e^{-ut^{2/\beta}\|\xi\|} B_k(y,\xi)v_k(\xi) d\xi \right) d\eta_{\beta}(u).
        \end{align*}
Using the Proposition \ref{p:3.1} we have 
        \begin{align}
          W_k^{(\beta, t)}(y) = & \int_0^{\infty}  \frac{1}{(ut^{2/\beta})^{n+2\gamma-1}}e^{-\frac{\|y\|}{ut^{2/\beta}}}d\eta_{\beta}(u) <\, \infty. \label{Property-2.1}
        \end{align}
        Thus, positivity of $W_k^{(\beta, t)}$ follows from \eqref{Property-2.1}.

        \item Let $m\in \mathbb{N}$ and $\beta = 4m$. Then the function $W_k^{(\beta, t)}(y)$ is the $(k,1)$-generalized Fourier transform of the Schwartz class function $e^{-t\|\xi\|^{2m}}$, so the rapidly decreasing nature of $W_k^{(\beta, t)}$ is directly followed by  \cite{Grobachev}.
        In the general context, the decay property of $W_k^{(\beta, t)}$ can be deduced from the classical Fourier setting \cite{Aliev-2008}. 

\item The property $(iii)$ possess the integrability of the function $W_k^{(\beta,t)}$, along with the involutory property of $\mathcal{F}_k$ we have
\begin{align*}
    \int_0^{\infty} W_k^{(\beta, t)}(\xi) v_k(\xi)d\xi &=  e^{-t\|0\|^{\beta/2}}=1.
\end{align*}
\item The $\beta$-semigroup as defined in \eqref{b-semigroup}, can also be viewed as a convolution product of the kernel $ W_k^{(\beta,t)}$ and the function $f$, using the boundedness property of convolution product (discussed in Proposition \ref{t:2.3}), we have
\begin{align*}
     \| \mathcal{B}_k^{(\beta,t)}(f) \|_{L_k^p(\mathbb{R}^n)} \leq c_k  \|f\|_{L_k^p(\mathbb{R}^n)}  \|W_k^{(\beta,t)}\|_{L_k^1(\mathbb{R}^n)} \text{ for $f \in L_k^p(\mathbb{R}^n)$.}  
\end{align*}In view of property $(ii)$ and $(iv)$,  for $0< \beta \le 2,$ the constant $C(\beta)=c_k;$ and the general setting, we consider the integral and followed by the change of variables.
\begin{align*}
    \int_{\mathbb{R}^n} |W_k^{(\beta,t)}(\xi)|v_k(\xi)d\xi &=  \int_{\mathbb{R}^n} \big|  \int_{\mathbb{R}^n}  e^{-t\|y\|^{\beta/2}} B_k(\xi, y) v_k(y)dy  \big| v_k(\xi)d\xi\\
    & = \int_{\mathbb{R}^n} \big|  \int_{\mathbb{R}^n}  e^{-\|y\|^{\beta/2}} B_k(\xi, y) v_k(y)dy  \big| v_k(\xi)d\xi\\
    &= \|W_k^{(\beta,1)}\|_{L_k^1(\mathbb{R}^n)} < \infty, \text{ true for all $t>0$}.
    \end{align*}
    \item We use the  Theorem \ref{t:4.9} for proving the property $(vi)$.  Let $\psi_t$ be  a dilated family, defined as  $\psi_t(y)= t^{n+2\gamma-1} W_k^{(\beta,1)}(ty)$, then 
    \begin{align*}
        \sup_{t>0}|\mathcal{B}_k^{(\beta,t)}(f)(x)| =  \sup_{t>0}|W_k^{(\beta,t)}\ast f (x) | = \sup_{t>0}| \psi_t \ast f (x)| \le c\mathcal{M}_kf(x).
    \end{align*}
    \item  Let
    \begin{align*}
        G_k^{\beta}(y)= c_k \int_{\mathbb{R}^n} e^{-\|\xi\|^{\beta/2}} B_k(y,\xi)v_k(\xi)d\xi. 
    \end{align*}Then, 
    \begin{align} \label{G-w-kernel}
        W_k^{(\beta,t)}(y) = t^{-\frac{2}{\beta}(n+2\gamma-1)}G_k^{\beta}(t^{-2/\beta}y).
    \end{align}
     Now we consider the point-wise bound of  $\mathcal{B}_k^{(\beta,t)}(f)(x)$ 
    \begin{align}\label{Property-7}
       \big| \mathcal{B}_k^{(\beta,t)}(f)(x) \big| \le c_k \int_{\mathbb{R}^n} \big| \tau_x W_k^{(\beta,t)}(y)f(y)\big|v_k(y)dy. 
    \end{align}
    In sight of H\"older's inequality, we get 
    \begin{align*}
        \int_{\mathbb{R}^n}& \big| \tau_x W_k^{(\beta,t)}(y)f(y)\big|v_k(y)dy
        \\ & \le \left( \int_{\mathbb{R}^n} |f(y)|^p|\tau_xW_k^{(\beta,t)}(y)| v_k(y)dy
        \right)^{1/p} \left( \int_{\mathbb{R}^n} |\tau_xW_k^{(\beta,t)}(y)|v_k(y)dy \right)^{1/q}. 
    \end{align*} Invoking the properties of the translation and the identity \eqref{G-w-kernel}, the above inequality reduces to
    \begin{align}
          & \int_{\mathbb{R}^n} \big| \tau_x W_k^{(\beta,t)}(y)f(y)\big|v_k(y)dy\notag \\
          & \le  c_k^2 \left( \int_{\mathbb{R}^n} |f(y)|^pv_k(y)dy \, \int_{\mathbb{R}^n} e^{-t\|\xi\|^{\beta/2}} v_k(\xi)d\xi
          \right)^{1/p}   \left( \int_{\mathbb{R}^n} |W_k^{(\beta,t)}(y)|v_k(y)dy \right)^{1/q}\notag \\
         & \le c_k \|f\|_{L_k^p(\mathbb{R}^n)}  t^{-\frac{2}{p\beta}(n+2\gamma-1)}
         \left(G_k^{\beta}(0)\right)^{1/p} \|W_k^{(\beta,1)}\|_{L_k^1(\mathbb{R}^n)}^{1/q}. \label{Property-7-2}
    \end{align} Thus we have the result from  \eqref{Property-7} and   \eqref{Property-7-2} with the constant\\ $\Tilde{c}=c_k \left(G_k^{\beta}(0)\right)^{1/p} \|W_k^{(\beta,1)}\|_{L_k^1(\mathbb{R}^n)}^{1/q}.$
    \item  The semi-group property will directly follow by applying the $(k,1)$-generalized Fourier transform.
    \item The proof of the property $(ix)$, follows as in the case of the Poisson semigroups (Theorem \ref{T:4.3}) by substituting $W_k^{(\beta, t)}$ and $t^{\frac{2}{\beta}}$ instead of poison kernel and $t^2$ respectively.
\end{enumerate}
\end{proof}
\subsection{ Inversion of Riesz potential} We establish an inversion formula for the Riesz potential in accordance with the wavelet-like transform  
With the aid of $\beta$-semigroup $\mathcal{B}_k^{(\beta,t)},$ we represent the Riesz-potential in the following theorem.  In cases like $\beta= 1$ and $2,$ the identity is established in Lemma \ref{Riesz-poisson} and identity \eqref{Riesz-heat}, respectively.

\begin{theorem}
    Let $0<\alpha< n+2\gamma-1, \, f\in L_k^p(\mathbb{R}^n)$ for $1< p < \frac{n+2\gamma-1}{\alpha}.$ The Riesz-potential of $f$ possesses the integral representation 
    \begin{align*}
        \mathbf{R}_k^{\alpha}(f)(y)= \frac{c_k^{-1}}{\Gamma(2\alpha/\beta)}\int_0^{\infty} t^{\frac{2\alpha}{\beta}-1}\mathcal{B}_k^{(\beta,t)}(f)(y)dt.
    \end{align*}
\end{theorem}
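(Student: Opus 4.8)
The plan is to move everything to the $(k,1)$-generalized Fourier side and to identify both members of the claimed identity as the \emph{same} Fourier multiplier operator. First I would record the multiplier of the $\beta$-semigroup. Since $\mathcal{B}_k^{(\beta,t)}(f)=W_k^{(\beta,t)}\ast_k f$ with $W_k^{(\beta,t)}=\mathcal{F}_k\!\left(e^{-t\|\cdot\|^{\beta/2}}\right)$, the involutory property $\mathcal{F}_k^2=\mathrm{Id}$ of Proposition \ref{prop-Fk} together with Proposition \ref{t:2.3}$(iii)$ gives $\mathcal{F}_k\!\left(\mathcal{B}_k^{(\beta,t)}(f)\right)(\xi)=e^{-t\|\xi\|^{\beta/2}}\,\mathcal{F}_k(f)(\xi)$. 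In parallel, starting from the heat representation \eqref{Riesz-heat} and the identity $\mathcal{F}_k(F_k(\cdot,t))(\xi)=e^{-t\|\xi\|}$ of Proposition \ref{p:3.1}$(ii)$, the elementary evaluation $\int_0^\infty t^{\alpha-1}e^{-t\|\xi\|}\,dt=\Gamma(\alpha)\|\xi\|^{-\alpha}$ shows that $\mathbf{R}_k^{\alpha}$ is the multiplier operator with symbol $c_k^{-1}\|\xi\|^{-\alpha}$, that is $\mathcal{F}_k(\mathbf{R}_k^{\alpha}f)(\xi)=c_k^{-1}\|\xi\|^{-\alpha}\,\mathcal{F}_k(f)(\xi)$.

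Next I would set $I(y):=\frac{c_k^{-1}}{\Gamma(2\alpha/\beta)}\int_0^\infty t^{\frac{2\alpha}{\beta}-1}\mathcal{B}_k^{(\beta,t)}(f)(y)\,dt$ and compute $\mathcal{F}_k(I)$. Interchanging $\mathcal{F}_k$ with the $t$-integral and inserting the multiplier from the first step yields $\mathcal{F}_k(I)(\xi)=\frac{c_k^{-1}}{\Gamma(2\alpha/\beta)}\left(\int_0^\infty t^{\frac{2\alpha}{\beta}-1}e^{-t\|\xi\|^{\beta/2}}\,dt\right)\mathcal{F}_k(f)(\xi)$. The substitution $s=t\|\xi\|^{\beta/2}$ collapses the inner integral to $\Gamma(2\alpha/\beta)\,\|\xi\|^{-\alpha}$, the only bookkeeping being the exponent check $-\frac{\beta}{2}\!\left(\frac{2\alpha}{\beta}-1\right)-\frac{\beta}{2}=-\alpha$. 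Hence $\mathcal{F}_k(I)(\xi)=c_k^{-1}\|\xi\|^{-\alpha}\,\mathcal{F}_k(f)(\xi)=\mathcal{F}_k(\mathbf{R}_k^{\alpha}f)(\xi)$, and injectivity of $\mathcal{F}_k$ (Proposition \ref{prop-Fk}) forces $I=\mathbf{R}_k^{\alpha}f$.

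The main obstacle is the rigorous justification of the convergence of the defining integral and of the Fubini interchange, and this is exactly where the hypotheses $0<\alpha<n+2\gamma-1$ and $1<p<\frac{n+2\gamma-1}{\alpha}$ are consumed. Near $t=0$ the factor $t^{\frac{2\alpha}{\beta}-1}$ is integrable because $2\alpha/\beta>0$ while $\mathcal{B}_k^{(\beta,t)}(f)$ stays bounded by property $(ix)$ of the $\beta$-semigroup proposition; near $t=\infty$ the decay estimate $(vii)$, namely $\sup_y|\mathcal{B}_k^{(\beta,t)}(f)(y)|\le \widetilde{c}\,t^{-\frac{2}{p\beta}(n+2\gamma-1)}\|f\|_{L_k^p(\mathbb{R}^n)}$, bounds the integrand by $t^{\frac{2\alpha}{\beta}-1-\frac{2}{p\beta}(n+2\gamma-1)}$, whose exponent is $<-1$ precisely when $\alpha<\tfrac1p(n+2\gamma-1)$, i.e. $p<\frac{n+2\gamma-1}{\alpha}$. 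Because the pairing with $B_k$ only improves matters ($|B_k(\xi,\cdot)|\le1$), I would make the interchange fully legitimate by first establishing the identity on the dense subclass $\mathcal{S}(\mathbb{R}^n)$, where $\mathcal{F}_k(f)$ is rapidly decreasing and every integral converges absolutely, and then extending to all of $L_k^p(\mathbb{R}^n)$ by the boundedness of $\mathbf{R}_k^{\alpha}$ and of the map $f\mapsto I$. As a final sanity check I would note the consistency at the endpoints: taking $\beta=1$ recovers Lemma \ref{Riesz-poisson} (since $\mathcal{B}_k^{(1,t)}=\mathcal{P}_k^{t}$ and $2\alpha/\beta=2\alpha$), while $\beta=2$ recovers \eqref{Riesz-heat} (since $\mathcal{B}_k^{(2,t)}=H_k^{t}$ and $2\alpha/\beta=\alpha$).
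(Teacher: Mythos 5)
Your proposal is correct and takes essentially the same route as the paper: both pass to the $(k,1)$-generalized Fourier side, use $\mathcal{F}_k\big(\mathcal{B}_k^{(\beta,t)}f\big)(\xi)=e^{-t\|\xi\|^{\beta/2}}\mathcal{F}_k(f)(\xi)$, collapse the $t$-integral via the substitution $s=t\|\xi\|^{\beta/2}$ into $\Gamma(2\alpha/\beta)\,\|\xi\|^{-\alpha}$, and conclude by matching this with the symbol of $\mathbf{R}_k^{\alpha}$ and invoking injectivity of $\mathcal{F}_k$. The extra material you supply — deriving the Riesz symbol $c_k^{-1}\|\xi\|^{-\alpha}$ from \eqref{Riesz-heat} (which is the normalization actually consistent with the $c_k^{-1}$ in the statement), and checking integrability at $t\to\infty$ via the decay estimate of property $(vii)$, which is precisely where $p<\frac{n+2\gamma-1}{\alpha}$ is used — fills in details the paper's proof leaves implicit.
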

\begin{proof}
      We are proving the result by considering the $(k,1)$-generalized Fourier transform of the function $ \phi_f(y)=  \int_0^{\infty} t^{\frac{2\alpha}{\beta}-1}  \mathcal{B}_k^{(\beta,t)}(f)(y)dt,$
     where $f \in \mathcal{S}(\mathbb{R}^n)$.  
    Thus, \begin{align*}
        \mathcal{F}_k(\phi_f)(\xi)&= c_k\int_{\mathbb{R}^n} \phi_f(y)B_k(\xi,y)v_k(y)dy\\
        & =  \int_0^{\infty}  t^{\frac{2\alpha}{\beta}-1}  \mathcal{F}_k(\mathcal{B}_k^{(\beta,t)}(f))(\xi)dt\\
        & = \mathcal{F}(f)(\xi) \int_{0}^{\infty}  t^{\frac{2\alpha}{\beta}-1} e^{-t\|\xi\|^{\beta/2}}dt.
    \end{align*}
Changing the variable $u=t\|\xi\|^{\beta/2},$ and the identity $\mathcal{F}_k( \mathbf{R}_k^{\alpha}(f))(\xi)=\frac{\mathcal{F}(f)(\xi)}{\|\xi\|^{\alpha}},$ we obtain the result.
\end{proof}
Now we construct a wavelet-like transform by making the use of $\beta$-semigroup $\mathcal{B}_k^{(\beta, t)}$. Let $\mu$ be the signed-Borel measure on $[0,\infty)$ (see Definition \ref{Wavelet-flett}). 

\begin{align}
    \mathcal{W}_k^{\beta}f(y,t) = \int_0^{\infty} \mathcal{B}_k^{(\beta, ts)}(f)(y)d\mu(s).
\end{align}
The wavelet-like transform is well-defined for any function $f\in L_k^p(\mathbb{R}^n)$
\begin{align*}
    \|  \mathcal{W}_k^{\beta}f(\cdot,t)\|_{L_k^p(\mathbb{R}^n)} \le C_{\beta}\,\|\mu\|\|f\|_{L_k^p(\mathbb{R}^n)}.
\end{align*}In the following theorem, we derive an inversion formula for the Riesz potential using the wavelet-like transform. The proof follows a similar approach to the classical setting, as presented in \cite{Sezer}.

\begin{theorem}\label{Riesz-inversion}
Let $0< \alpha< n+2\gamma-1, \beta>0$ and $f\in L_k^p(\mathbb{R}^n)$ for $1< p < \frac{n+2\gamma-1}{\alpha}$. Suppose that $\mu$ is a finite Borel measure, which satisfies the conditions 

\begin{eqnarray*}
 \int_1^\infty t^\eta\, d|\mu|(t)&<&\infty\quad\text{for some}\quad \eta> \frac{2\alpha}{\beta}, \quad \text{and}\\
 \int_0^\infty t^i\, d\mu(t) &=& 0,\,\, i=0,1,\cdots \left[\frac{2\alpha}{\beta}\right]\, \left(\text{integer part of}\,\,\, \frac{2\alpha}{\beta}\right).
\end{eqnarray*}
If $\phi = \mathbf{R}_k^{\alpha}(f)$, then 
\begin{align*}
\int_0^{\infty} \mathcal{W}_k^{\beta}\phi(y,t)\frac{dt}{t^{1+2\alpha/\beta}}= \lim_{\epsilon\rightarrow 0} \int_{\epsilon}^{\infty}\mathcal{W}_k^{\beta}\phi(y,t)\frac{dt}{t^{1+2\alpha/\beta}} = C\left({2\alpha}/{\beta}, \mu \right)f(y),
\end{align*}where $C(\theta, \mu)$ is given in Theorem \ref{inversion-Flett}.
\end{theorem}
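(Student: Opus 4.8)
The plan is to follow the template of the Flett inversion (Theorem~\ref{inversion-Flett}), replacing the Poisson semigroup $\mathcal{P}_k^t$ by the $\beta$-semigroup $\mathcal{B}_k^{(\beta,t)}$ and the Flett integral representation by the Riesz representation $\mathbf{R}_k^{\alpha}(f)=\frac{c_k^{-1}}{\Gamma(2\alpha/\beta)}\int_0^\infty u^{2\alpha/\beta-1}\mathcal{B}_k^{(\beta,u)}(f)\,du$ just established. Writing $\theta=2\alpha/\beta$, the first step is to unfold $\mathcal{W}_k^{\beta}\phi(y,t)$ for $\phi=\mathbf{R}_k^{\alpha}(f)$. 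Applying $\mathcal{B}_k^{(\beta,ts)}$ inside the Riesz representation and invoking the semigroup law $\mathcal{B}_k^{(\beta,ts)}\circ\mathcal{B}_k^{(\beta,u)}=\mathcal{B}_k^{(\beta,ts+u)}$ (property~$(viii)$ of the preceding proposition) gives $\mathcal{B}_k^{(\beta,ts)}(\phi)(y)=\frac{c_k^{-1}}{\Gamma(\theta)}\int_0^\infty u^{\theta-1}\mathcal{B}_k^{(\beta,ts+u)}(f)(y)\,du$, which after the substitution $v=ts+u$ equals $\frac{c_k^{-1}}{\Gamma(\theta)}\int_0^\infty (v-ts)_+^{\theta-1}\mathcal{B}_k^{(\beta,v)}(f)(y)\,dv$. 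Integrating against $d\mu(s)$ then presents $\mathcal{W}_k^{\beta}\phi(y,t)$ as a double integral in the variables $v$ and $s$.

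Next I would introduce the truncated integral $T_\epsilon f(y)=\int_\epsilon^\infty \mathcal{W}_k^{\beta}\phi(y,t)\,t^{-1-\theta}\,dt$ and interchange the order of integration by Fubini's theorem. This interchange is licensed by the uniform bound $\|\mathcal{B}_k^{(\beta,v)}(f)\|_{L_k^p(\mathbb{R}^n)}\le C_\beta\|f\|_{L_k^p(\mathbb{R}^n)}$ (property~$(v)$) together with the growth hypothesis $\int_1^\infty t^\eta\,d|\mu|(t)<\infty$ for $\eta>\theta$. Carrying out the scaling $t=\epsilon\tau$ and $v=\epsilon w$ cancels every power of $\epsilon$ in the kernel and collapses $T_\epsilon f$ to the single integral $T_\epsilon f(y)=\int_0^\infty \mathcal{B}_k^{(\beta,\epsilon w)}(f)(y)\,\sigma_{\theta,\mu}(w)\,dw$, where $\sigma_{\theta,\mu}(w)=\frac{1}{\Gamma(\theta+1)\,w}\int_0^\infty (w-s)_+^{\theta}\,d\mu(s)$ (the normalising constant $c_k^{-1}$ being absorbed), and the inner $\tau$-integral is evaluated by the same Beta-function identity from \cite{Eryigit} used in Theorem~\ref{inversion-Flett}. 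By construction $\int_0^\infty \sigma_{\theta,\mu}(w)\,dw=C(\theta,\mu)$, the constant displayed in Theorem~\ref{inversion-Flett}.

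The decisive point is the integrability of $\sigma_{\theta,\mu}$ on $(0,\infty)$, and this is exactly where both hypotheses on $\mu$ are consumed; I expect it to be the main obstacle. Near $w=0$ only $s\in[0,w]$ contributes to $\int_0^\infty(w-s)_+^{\theta}\,d\mu(s)$, and the vanishing-moment conditions $\int_0^\infty s^i\,d\mu(s)=0$ for $i=0,1,\dots,[\theta]$ annihilate the leading terms of the binomial expansion of $(w-s)^{\theta}$, so that the quotient survives the factor $1/w$ and stays integrable near the origin; at infinity the same cancellations together with $\int_1^\infty t^\eta\,d|\mu|(t)<\infty$ for some $\eta>\theta$ force the tail to decay fast enough, and they identify $\int_0^\infty\sigma_{\theta,\mu}$ with the Laplace-transform expression for $C(\theta,\mu)$. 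Unlike the Flett case, the absence of the weight $e^{-t}$ in the Riesz representation removes the auxiliary term $f(x)\int_0^\infty(e^{-\epsilon u}-1)\sigma\,du$, which slightly streamlines the remaining estimates.

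Finally, using $\int_0^\infty\sigma_{\theta,\mu}=C(\theta,\mu)$ I would write $T_\epsilon f(y)-C(\theta,\mu)f(y)=\int_0^\infty\big(\mathcal{B}_k^{(\beta,\epsilon w)}(f)(y)-f(y)\big)\sigma_{\theta,\mu}(w)\,dw$ and let $\epsilon\to0$. For the $L_k^p$ limit I apply Minkowski's integral inequality, the strong continuity $\mathcal{B}_k^{(\beta,t)}(f)\to f$ as $t\to0$ (property~$(ix)$), and the dominated convergence theorem. For pointwise almost-everywhere convergence, and uniform convergence when $f\in C_0(\mathbb{R}^n)$, I dominate the integrand via the maximal estimate $\sup_{t>0}|\mathcal{B}_k^{(\beta,t)}(f)|\le c\,\mathcal{M}_kf$ (property~$(vi)$), exactly as in the maximal-function arguments of Theorem~\ref{t:4.9} and Theorem~\ref{inversion-Flett}. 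This establishes both the identity and its truncated limiting interpretation, completing the proof.
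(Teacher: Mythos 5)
Your proposal is correct and is precisely the intended argument: the paper offers no proof of Theorem \ref{Riesz-inversion} beyond the remark that it follows the classical approach of \cite{Sezer}, and your adaptation of the paper's own proof of Theorem \ref{inversion-Flett} --- unfolding $\phi=\mathbf{R}_k^{\alpha}(f)$ via the semigroup law for $\mathcal{B}_k^{(\beta,t)}$, applying Fubini to the truncated integral, using the Beta-function identity to collapse $T_\epsilon f$ to $\int_0^\infty \mathcal{B}_k^{(\beta,\epsilon w)}(f)\,\sigma_{\theta,\mu}(w)\,dw$ with $\theta=2\alpha/\beta$, and concluding by strong continuity, dominated convergence, and the maximal estimate --- is exactly that template. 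One minor and harmless misattribution: integrability of $\sigma_{\theta,\mu}$ near $w=0$ is automatic, since only $s\in[0,w]$ contributes and hence $|\sigma_{\theta,\mu}(w)|\le \|\mu\|\,w^{\theta-1}/\Gamma(\theta+1)$ with $\theta>0$; the vanishing-moment conditions and the $\eta$-moment bound are consumed entirely in the decay estimate as $w\to\infty$ (where the truncation of $\int_0^\infty$ to $\int_0^w$ is controlled by the $\eta$-moment), not at the origin as you suggest.
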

\begin{Remark}
    The integral representation of Riesz potential $\mathbf{R}_k^{\alpha}$ for $0<\alpha< n + 2\gamma-1 $,  in terms of $\beta$-semigroup is valid for $L_k^p(\mathbb{R}^n)$ functions for $1 < p < \frac{n+2\gamma-1}{\alpha}.$  The parameter $\beta$ is free to choose in a wide range. If we choose $\beta >2\alpha$, then $\left[\frac{2\alpha}{\beta}\right]=0,$ so the vanishing moment has no particular role in the condition, our measure itself satisfying the required condition. However we need $\int_1^{\infty}s\,d|\mu|(s)< \infty$.  
\end{Remark}
Now we introduce a space associated with the Riesz potential in the $(k,1)$-generalized Fourier setting.

\textbf{Riesz potential space:} The Riesz potential space of order $\alpha$, where $0<\alpha<n + 2\gamma-1$ is the collection of all functions which are the image of $L_k^p(\mathbb{R}^n)$ under the map $\mathbf{R}_k^{\alpha}.$
\begin{align*}
    \mathbf{R}_k^{\alpha}(L_k^p(\mathbb{R}^n))= \{ \phi: \phi = \mathbf{R}_k^{\alpha}(f), \, f \in L_k^p(\mathbb{R}^n) \},
\end{align*} where $1< p < \frac{n+2\gamma-1}{\alpha}$. The norm on the space 
$ \mathbf{R}_k^{\alpha}(L_k^p(\mathbb{R}^n)),$ is given by the relation $\|\phi\|_{ \mathbf{R}_k^{\alpha}(L_k^p(\mathbb{R}^n))}=\|f\|_{L_k^p(\mathbb{R}^n)}$ it induces a structure of complete normed linear space. 

\section{Bi-parametric potential type spaces} \label{S:5}
Recently, Aliev explored bi-parametric potential spaces for the classical Fourier transform \cite{Aliev}. This section extends his approach to the 
$(k,1)$-generalized Fourier transform.  Now, we generalize both the Bessel and Flett potentials in terms of the 
 $\beta$-semigroup. For 
$\beta\in (0,\infty)$, we introduce the corresponding bi-parametric potentials and define the associated potential spaces.
\begin{definition}
 Let $f\in L_k^p(\mathbb{R}),\,\,1\le p \le \infty$   and for any $\alpha, \beta>0$. We define the bi- parametric potentials $\mathfrak{J}_k^{(\alpha,\beta)}$ of order $\alpha$ of $f$  by
\begin{equation*}
 \mathfrak{J}_k^{(\alpha,\beta)}(f)(x) = \frac{1}{\Gamma(\alpha/\beta)} \int_0^\infty t^{\frac{\alpha}{\beta}-1}\,e^{-t}\,\mathcal{B}_k^{(\beta,t)}(f)(x)\,dt.  
\end{equation*}
\end{definition}

\begin{theorem}
Let $\alpha, \beta >0$ and $f\in L_k^p(\mathbb{R})$, $1\le p <\infty$  and $f\in C_0(\mathbb{R})$  for $p=\infty$. Then 
\begin{itemize}
\item [$(i)$] $ \mathfrak{J}_k^{(\alpha,\beta)}$ is well defined on $L_k^p(\mathbb{R}^n)$ and it is a bounded operator,
\begin{equation*}
\| \mathfrak{J}_k^{(\alpha,\beta)}(f)\|_{L_k^p(\mathbb{R}^n)} \le c_k\, \|f\|_{L_k^p(\mathbb{R}^n)}.
        \end{equation*}
\item[$(ii)$] The family $( \mathfrak{J}_k^{(\alpha,\beta)})_{\alpha\ge0}$ satisfies the semigroup property $ \mathfrak{J}_k^{(\alpha_1,\beta)}\circ \, \mathfrak{J}_k^{(\alpha_2,\beta)} = \mathfrak{F}_k^{(\alpha_1+\alpha_2, \beta)}$.
\end{itemize}
\end{theorem}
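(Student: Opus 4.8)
The plan is to treat the two assertions separately, mirroring the arguments already used for the Flett potential in Theorem \ref{t:3.10}, with the $\beta$-semigroup $\mathcal{B}_k^{(\beta,t)}$ now playing the role of the Poisson semigroup.

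For part $(i)$ I would start from the integral definition of $\mathfrak{J}_k^{(\alpha,\beta)}$ and apply Minkowski's integral inequality to bring the $L_k^p$-norm inside the $t$-integral,
\[
\|\mathfrak{J}_k^{(\alpha,\beta)}(f)\|_{L_k^p(\mathbb{R}^n)} \le \frac{1}{\Gamma(\alpha/\beta)} \int_0^\infty t^{\frac{\alpha}{\beta}-1} e^{-t}\, \|\mathcal{B}_k^{(\beta,t)}(f)\|_{L_k^p(\mathbb{R}^n)}\, dt.
\]
The uniform bound $\|\mathcal{B}_k^{(\beta,t)}(f)\|_{L_k^p(\mathbb{R}^n)} \le C_\beta \|f\|_{L_k^p(\mathbb{R}^n)}$ from property $(v)$ of the preceding proposition factors the norm out and leaves the elementary Gamma integral $\int_0^\infty t^{\alpha/\beta-1}e^{-t}\,dt = \Gamma(\alpha/\beta)$, which cancels the normalizing constant and yields the claimed estimate (with $C_\beta = c_k$ in the range $0<\beta\le 2$). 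This part is entirely routine; well-definedness follows from the same absolute-convergence estimate.

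For part $(ii)$ I would pass to the Fourier side. Using that $W_k^{(\beta,t)} = \mathcal{F}_k(e^{-t\|\cdot\|^{\beta/2}})$ together with Proposition \ref{t:2.3}$(iii)$, I would first record the multiplier identity $\mathcal{F}_k(\mathcal{B}_k^{(\beta,t)}(f))(\xi) = e^{-t\|\xi\|^{\beta/2}}\mathcal{F}_k(f)(\xi)$. Inserting this into the definition of $\mathfrak{J}_k^{(\alpha,\beta)}$ and interchanging $\mathcal{F}_k$ with the $t$-integral (justified by Fubini--Tonelli, using $|B_k|\le 1$ and the integrability of $t^{\alpha/\beta-1}e^{-t}$) gives, after the substitution $u = t(1+\|\xi\|^{\beta/2})$,
\[
\mathcal{F}_k(\mathfrak{J}_k^{(\alpha,\beta)}(f))(\xi) = \frac{\mathcal{F}_k(f)(\xi)}{\Gamma(\alpha/\beta)} \int_0^\infty t^{\frac{\alpha}{\beta}-1} e^{-t(1+\|\xi\|^{\beta/2})}\, dt = \frac{\mathcal{F}_k(f)(\xi)}{(1+\|\xi\|^{\beta/2})^{\alpha/\beta}}.
\]
This recovers the symbol $(1+\|\xi\|^{\beta/2})^{-\alpha/\beta}$ consistent with the formal definition $\mathfrak{J}_k^{(\alpha,\beta)} = (I+(-\|x\|\Delta_k)^{\beta/2})^{-\alpha/\beta}$.

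With the multiplier in hand the semigroup property is immediate: composing $\mathfrak{J}_k^{(\alpha_1,\beta)}$ and $\mathfrak{J}_k^{(\alpha_2,\beta)}$ multiplies the two symbols, and since $(1+\|\xi\|^{\beta/2})^{-\alpha_1/\beta}(1+\|\xi\|^{\beta/2})^{-\alpha_2/\beta} = (1+\|\xi\|^{\beta/2})^{-(\alpha_1+\alpha_2)/\beta}$, the injectivity of $\mathcal{F}_k$ (Proposition \ref{prop-Fk}) forces $\mathfrak{J}_k^{(\alpha_1,\beta)}\circ\mathfrak{J}_k^{(\alpha_2,\beta)}(f) = \mathfrak{J}_k^{(\alpha_1+\alpha_2,\beta)}(f)$. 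The main obstacle is making this Fourier-multiplier computation rigorous on all of $L_k^p(\mathbb{R}^n)$, since the identities above are cleanest on $L_k^2(\mathbb{R}^n)$ or on $\mathcal{S}(\mathbb{R}^n)$; I would therefore establish them there first and then extend to general $f\in L_k^p(\mathbb{R}^n)$ by density, using the boundedness from part $(i)$ to guarantee that both sides depend continuously on $f$ in the $L_k^p$-norm.
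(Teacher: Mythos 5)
Your proposal is correct and takes essentially the same route as the paper, whose proof of this theorem is simply the remark that it follows the argument of Theorem \ref{t:3.10}: Minkowski's integral inequality together with the uniform $L_k^p$ bound for $\mathcal{B}_k^{(\beta,t)}$ for part $(i)$, and the Fourier multiplier identity $\mathcal{F}_k(\mathfrak{J}_k^{(\alpha,\beta)}f)(\xi)=(1+\|\xi\|^{\beta/2})^{-\alpha/\beta}\mathcal{F}_k(f)(\xi)$ for part $(ii)$. Your parenthetical observation that the constant is really $C_\beta$ (equal to $c_k$ only for $0<\beta\le 2$) is a fair refinement of the bound as stated in the theorem.
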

\begin{proof}
The proof follows the same approach used in the proof of Theorem \ref{t:3.10}.  
\end{proof}
\begin{Remark} We observe the following
    \begin{itemize}
        \item [$(i)$]If $\beta=1$, then $\mathfrak{J}_k^{(\alpha,\beta)} $ reduces to Flett potential $\mathcal{I}_k^{\alpha}$.\\
        \item[$(ii)$] If $\beta=2$, then $\mathfrak{J}_k^{(\alpha,\beta)}$ reduces to Bessel potential $ \mathbf{I}_k^{\alpha}$.
    \end{itemize}
\end{Remark}
We define the bi-parametric wavelet-like transform below.
\begin{definition}
For $\alpha,\beta>0$ and $f\in L_k^p(\mathbb{R}^n)$, the bi-parametric wavelet-like transform generated by the wavelet measure $\mu$ is defined by 
\begin{equation*}
    \mathcal{A}^{\mu,\beta}_{k}(f)(x,t) = \int_0^\infty e^{-ty}\, \mathcal{B}_k^{(\beta,ty)}f(x)\, d\mu(y), \,\,   t>0.
\end{equation*}
\end{definition}
\begin{proposition} 
Let $\alpha,\beta>0$ and $f\in L_k^p(\mathbb{R}^n)$,\, $1\le p\le \infty$. Then the wavelet-like transform $  \mathcal{A}^{\mu,\beta}_{k}$ is bounded on $ L_k^p(\mathbb{R}^n)$.
\end{proposition}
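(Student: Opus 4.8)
The plan is to mirror the proof of Proposition~\ref{Wavelet-bounded} almost verbatim, replacing the Poisson semigroup $\mathcal{P}_k^{ty}$ by the $\beta$-semigroup $\mathcal{B}_k^{(\beta,ty)}$ and invoking the uniform boundedness estimate for the latter in place of Theorem~\ref{T:4.3}$(i)$. Fix $f\in L_k^p(\mathbb{R}^n)$ and $t>0$; the quantity to control is $\|\mathcal{A}^{\mu,\beta}_{k}(f)(\cdot,t)\|_{L_k^p(\mathbb{R}^n)}$. Since the defining integral is a $\mu$-superposition of the operators $\mathcal{B}_k^{(\beta,ty)}$ weighted by $e^{-ty}$, the natural tool is Minkowski's integral inequality.

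First I would apply Minkowski's integral inequality to pass the $L_k^p$-norm inside the $d\mu$-integral, which replaces $d\mu$ by the total-variation measure $d|\mu|$ and yields
\begin{equation*}
\|\mathcal{A}^{\mu,\beta}_{k}(f)(\cdot,t)\|_{L_k^p(\mathbb{R}^n)} \le \int_0^\infty e^{-ty}\,\|\mathcal{B}_k^{(\beta,ty)}(f)\|_{L_k^p(\mathbb{R}^n)}\,d|\mu|(y).
\end{equation*}
Next I would invoke property $(v)$ of the $\beta$-semigroup proposition, namely the uniform bound $\|\mathcal{B}_k^{(\beta,\tau)}(f)\|_{L_k^p(\mathbb{R}^n)} \le C_\beta\,\|f\|_{L_k^p(\mathbb{R}^n)}$ valid for every $\tau>0$ with a constant $C_\beta$ independent of $\tau$, thereby factoring out $\|f\|_{L_k^p(\mathbb{R}^n)}$. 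Finally, using $e^{-ty}\le 1$ for $t,y>0$ together with the finiteness $\|\mu\|=\int_0^\infty d|\mu|(y)<\infty$ of the wavelet measure, the remaining integral is dominated by $\|\mu\|$, giving
\begin{equation*}
\|\mathcal{A}^{\mu,\beta}_{k}(f)(\cdot,t)\|_{L_k^p(\mathbb{R}^n)} \le C_\beta\,\|\mu\|\,\|f\|_{L_k^p(\mathbb{R}^n)},
\end{equation*}
uniformly in $t>0$, which is the claimed boundedness.

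There is no genuine obstacle here, as every ingredient is already in place. The only points meriting a word of care are that the uniform boundedness of $\mathcal{B}_k^{(\beta,ty)}$ holds with a constant independent of both $t$ and $y$, so that the final estimate is uniform in the parameter $t$, and that the exponential weight $e^{-ty}$ is harmless since it never exceeds $1$; both are immediate. The argument is thus structurally identical to Proposition~\ref{Wavelet-bounded}, with the $\beta$-semigroup bound $C_\beta$ playing the role previously played by the Poisson bound $c_k$.
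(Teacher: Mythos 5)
Your proof is correct and is exactly the argument the paper intends: the paper states this proposition without writing out a proof, but your chain of estimates (Minkowski's integral inequality, the uniform bound $\|\mathcal{B}_k^{(\beta,\tau)}(f)\|_{L_k^p(\mathbb{R}^n)}\le C_\beta\|f\|_{L_k^p(\mathbb{R}^n)}$ from property $(v)$ of the $\beta$-semigroup, then $e^{-ty}\le 1$ and $\|\mu\|<\infty$) is verbatim the proof of Proposition~\ref{Wavelet-bounded} with $\mathcal{P}_k^{ty}$ replaced by $\mathcal{B}_k^{(\beta,ty)}$, and it reproduces the bound $C_\beta\,\|\mu\|\,\|f\|_{L_k^p(\mathbb{R}^n)}$ that the paper records for the analogous Riesz wavelet transform $\mathcal{W}_k^{\beta}$.
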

 \begin{theorem} 
Let \,  $\mathfrak{J}_k^{(\alpha,\beta)}, \,\, \alpha,\beta>0$   be the bi-parameter  potential of $f$ in $L_k^p(\mathbb{R})$ and $\mathcal{A}^{\mu,\beta}_{k}(f)$  be the 
 bi-parameter wavelet-like transform. If  $\mu$ is a finite Borel measure on $[0,\infty)$ such that 
\begin{eqnarray*}
 \int_1^\infty t^\eta\, d|\mu|(t)&<&\infty\quad\text{for some}\quad \eta> \frac{\alpha}{\beta},\\
 \int_0^\infty t^i\, d\mu(t) &=& 0,\,\, i=0,1,\cdots \left[\frac{\alpha}{\beta}\right]\,\left(\text{integer part of}\,\,\, \frac{\alpha}{\beta}\right).
\end{eqnarray*}
Then 
\begin{equation} \label{e.3.1}
    \int_0^\infty y^{-(\frac{\alpha}{\beta}+1)}\,\mathcal{A}^{\mu,\beta}_{k}(f)(x,y)\, dy = C(\alpha/\beta, \mu)\,f(x),
\end{equation}
where the constant is given in Theorem  \ref{inversion-Flett}. Also, the equation \eqref{e.3.1} interpreted as 
\begin{equation} \label{e.3.2}
 \underset{\epsilon\rightarrow 0}{\lim}\int_0^\infty y^{-(\frac{\alpha}{\beta}+1)}\,\mathcal{A}^{\mu,\beta}_{k}(f)(x,y)\, dy = C(\mu,\alpha/\beta)\,f(x).
\end{equation}
The limit \eqref{e.3.2} exists in the sense of $L_k^p$ norm and point-wise almost every $x\in \mathbb{R}^n$.  If $f\in \mathcal{C}_0(\mathbb{R}^n)$, the convergence is uniform.
\end{theorem}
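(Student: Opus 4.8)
The plan is to mirror the structure of the Flett potential inversion (Theorem \ref{inversion-Flett}) with the roles of the Poisson semigroup $\mathcal{P}_k^t$ and order $\alpha$ replaced by the $\beta$-semigroup $\mathcal{B}_k^{(\beta,t)}$ and the effective order $\alpha/\beta$. First I would establish the commutativity relation $\mathfrak{J}_k^{(\alpha,\beta)}\circ\mathcal{B}_k^{(\beta,t)} = \mathcal{B}_k^{(\beta,t)}\circ\mathfrak{J}_k^{(\alpha,\beta)}$ by passing to the $(k,1)$-generalized Fourier side: both the bi-parametric potential and the $\beta$-semigroup act as Fourier multipliers (with symbols $(1+\|\xi\|^{\beta/2})^{-\alpha/\beta}$ and $e^{-t\|\xi\|^{\beta/2}}$ respectively), and multipliers commute, so Proposition \ref{t:2.3}$(iii)$ and the semigroup property give the claim immediately.

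Next I would substitute $\phi = \mathfrak{J}_k^{(\alpha,\beta)}(f)$ into the wavelet-like transform $\mathcal{A}^{\mu,\beta}_{k}$ and unfold both integral definitions. Using the commutativity just established together with the integral representation of $\mathfrak{J}_k^{(\alpha,\beta)}$, the composite $\mathcal{A}^{\mu,\beta}_{k}(\mathfrak{J}_k^{(\alpha,\beta)}f)(x,y)$ becomes a triple integral over the semigroup parameters and the measure $\mu$. I would then apply Fubini's theorem (justified by property $(v)$ of the $\beta$-semigroup, which gives the $L_k^p$ bound $C_\beta$, and by the moment/decay hypotheses on $\mu$) to reorganize it into a single integral against $\mathcal{B}_k^{(\beta,u)}(f)$ with a weight obtained by collapsing the inner $y$- and $s$-integrations. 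The key identity here is the computation
\begin{equation*}
\int_\epsilon^{u/s} y^{-\frac{\alpha}{\beta}-1}(u-sy)^{\frac{\alpha}{\beta}-1}\,dy = \frac{\Gamma(\alpha/\beta)}{\Gamma(\alpha/\beta+1)}\frac{(u-s)^{\alpha/\beta}}{u},
\end{equation*}
exactly as in the Flett case (cited from \cite{Eryigit}), which produces the kernel $\sigma_{\alpha/\beta,\mu}(u)$ and the constant $C(\alpha/\beta,\mu)=\int_0^\infty \sigma_{\alpha/\beta,\mu}(u)\,du$.

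Having reduced the truncated integral to $T_\epsilon f(x)=\int_0^\infty e^{-\epsilon u}\,\mathcal{B}_k^{(\beta,\epsilon u)}(f)(x)\,\sigma_{\alpha/\beta,\mu}(u)\,du$, I would split $T_\epsilon f - C(\alpha/\beta,\mu)f$ into two pieces as in Theorem \ref{inversion-Flett}: one measuring $\mathcal{B}_k^{(\beta,\epsilon u)}(f)-f$ and one measuring $(e^{-\epsilon u}-1)$. The $L_k^p$ convergence then follows from Minkowski's integral inequality together with property $(ix)$ of the $\beta$-semigroup (strong continuity at $t=0$) and the dominated convergence theorem, while the pointwise a.e. and uniform (on $C_0$) convergence follow from the maximal bound in property $(vi)$, $\sup_{t>0}|\mathcal{B}_k^{(\beta,t)}(f)(x)|\le c\,\mathcal{M}_kf(x)$, exactly paralleling the final estimate in Theorem \ref{inversion-Flett}.

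The main obstacle I anticipate is the rigorous justification of the Fubini interchange and the convergence of the defining series/integrals, since unlike the Poisson case the kernel $W_k^{(\beta,t)}$ need not be positive for $\beta>2$ and only the weaker decay estimate in property $(iii)$ is available. The moment conditions $\int_0^\infty t^i\,d\mu(t)=0$ for $i=0,\dots,[\alpha/\beta]$ are precisely what guarantees integrability of $\sigma_{\alpha/\beta,\mu}(u)$ near $u=0$ (cancellation of the low-order Taylor terms of $(u-s)^{\alpha/\beta}$), and the growth bound $\int_1^\infty t^\eta\,d|\mu|(t)<\infty$ for some $\eta>\alpha/\beta$ controls the behavior at infinity; I would need to verify these carefully rather than simply inherit them, using the boundedness constant $C_\beta$ in place of the factor $c_k$ that appears in the Poisson setting.
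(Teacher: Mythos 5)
Your proposal is correct and follows exactly the route the paper intends: the paper's own proof is a one-line deferral to the techniques of Theorem \ref{inversion-Flett}, and your plan is precisely that argument transplanted to the $\beta$-semigroup with $\alpha$ replaced by $\alpha/\beta$ (commutativity via Fourier multipliers, Fubini, the kernel identity producing $\sigma_{\alpha/\beta,\mu}$, then properties $(vi)$ and $(ix)$ of $\mathcal{B}_k^{(\beta,t)}$ for the convergence). In fact your write-up is more careful than the paper's, since you explicitly flag the two points where the transplant is not literal — the loss of kernel positivity for $\beta>2$ in the Fubini step and the replacement of $c_k$ by $C_\beta$ — which the paper leaves unaddressed.
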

\begin{proof}
By using the same proof techniques of Theorem \ref{inversion-Flett}, we can prove this theorem.    
\end{proof}
\begin{note}
   If $\beta=1$, then $ \mathcal{A}^{\mu,1}_{k} $ reduces to Flett potential wavelet-like transform $\mathcal{W}^\mu_{k}$. Similarly, for $\beta=2$,  $\mathcal{A}^{\mu,2}_{k} $  will be a wavelet-like Bessel potential transform.  
\end{note}
Next, we define the bi-parametric potential space which generalizes the Flett potential space.
\begin{definition}
Let $\alpha, \beta>0$ and $1 <p<\infty$. Then the bi-parametric potential space is the image of $L_k^p(\mathbb{R}^n)$ under the operator $\mathcal{J}_k^{\alpha,\beta}$
\begin{equation*}
\mathfrak{H}^{\alpha,\beta,p}_k = \left\{ f : \mathfrak{J}_k^{(\alpha,\beta)}(\phi) = f, \,\, \text{for some}\,\,\phi \in L_k^p(\mathbb{R}^n) \right\}.
\end{equation*}
\end{definition}
We define the norm on $\mathfrak{H}^{\alpha,\beta,p}_k$ such that $ \|f\|_{\mathfrak{H}^{\alpha,\beta,p}_k} = \|\phi\|_{L_k^p(\mathbb{R}^n)}$. Certainly, we conclude that $\mathfrak{F}^{\alpha,p}_k$ is a Banach space.

\begin{note}
If $\beta=1,2$,  then $\mathfrak{H}^{\alpha,1,p}_k, \mathfrak{H}^{\alpha,2,p}_k$ are called the Flett and Bessel potential spaces, respectively.  
\end{note}

\textbf{Acknowledgment:}
The authors extend their gratitude for the financial support provided by the UGC (221610147795) for the first author (AP) and the ANRF with the reference number SUR/2022/005678, for the third author (SKV). Their funding and resources were instrumental in the completion of this work. \\  
\\
\textbf{Declarations:}\\ \\
\textbf{Conflict of interest:}
We would like to declare that we do not have any conflict of interest.\\ \\
\textbf{Data availability:}
No data source is needed.

\bibliographystyle{abbrvnat} 
 
\end{document}